\theoremstyle{plain}
    \newtheorem{theorem}{Theorem}[section]
    \newtheorem*{theorem*}{Theorem}
    \newtheorem{lemma}[theorem]{Lemma}
    \newtheorem{proposition}[theorem]{Proposition}
\theoremstyle{definition}
    \newtheorem{remark}{Remark}[section]
\theoremstyle{remark}
\numberwithin{equation}{section}
\renewcommand{\l}{\left}
\renewcommand{\r}{\right}
\newcommand{\p}{\partial}
\newcommand{\lec}{\lesssim}
\newcommand{\gec}{\gtrsim}
\newcommand{\lan}{\langle}
\newcommand{\ran}{\rangle}
\newcommand{\R}{\mathbb{R}}
\newcommand{\Y}{\mathbb{Y}}
\newcommand{\D}{\mathcal{D}}
\newcommand{\cE}{\mathcal{E}}
\newcommand{\J}{\mathcal{J}}
\newcommand{\M}{\mathcal{M}}
\newcommand{\W}{\mathcal{W}}
\renewcommand{\Y}{\mathcal{Y}}
\newcommand{\de}{\delta}
\newcommand{\la}{\lambda}
\newcommand{\La}{\Lambda}
\newcommand{\Ga}{\Gamma}
\newcommand{\al}{\alpha}
\newcommand{\be}{\beta}
\renewcommand{\th}{\theta}
\newcommand{\si}{\sigma}
\newcommand{\Om}{\Omega}
\newcommand{\z}{\zeta}
\newcommand{\gat}{{\gamma}}
\newcommand{\Tt}{\tilde{T}}
\newcommand{\gt}{\tilde{g}}
\newcommand{\chit}{\tilde{\chi}}
\newcommand{\zz}{\tilde{z}}
\newcommand{\Ht}{\tilde{H}}
\newcommand{\mv}{\vec{m}}
\newcommand{\Mv}{\vec{M}}
\newcommand{\bv}{\vec{b}}
\DeclareMathOperator{\im}{Im}
\DeclareMathOperator{\re}{Re}
\DeclareMathOperator{\supp}{supp}
\DeclareMathOperator{\esssup}{ess\sup}
\begin{document}

\title[TwoSolitonDelta]{Two-solitons with logarithmic separation for 1D NLS with repulsive delta potential}
\author[S. Gustafson]{Stephen Gustafson}
\address[S. Gustafson]{University of British Columbia, 1984 Mathematics Rd., Vancouver, Canada V6T1Z2.}
\email{gustaf@math.ubc.ca}
\author[T. Inui]{Takahisa Inui}
\address[T. Inui]{Department of Mathematics, Graduate School of Science, Osaka University, Toyonaka, Osaka, Japan 560-0043.
\newline 
University of British Columbia, 1984 Mathematics Rd., Vancouver, Canada V6T1Z2.}
\email{inui@math.sci.osaka-u.ac.jp 
}
\date{\today}
\maketitle

\begin{abstract}
We consider the one-dimensional nonlinear Schr\"odinger equation with
focusing, power nonlinearity, and a repulsive delta potential.
We show that if the potential is not too strong, the construction
by \cite{Ngu19} of solutions converging strongly at time infinity to a
pair of logarithmically separating solitons can be adapted to 
accommodate the effect of the potential. On the other hand,
we show that if the potential is stronger, no such solutions exist.
\end{abstract}

\setcounter{tocdepth}{1}
\tableofcontents

\section{Introduction}

\subsection{Background}

We consider the nonlinear Schr\"{o}dinger equation, in one space dimension, with focusing power nonlinearity, and with a delta potential at the origin $x=0$:
\begin{equation} \label{NLS}
\begin{split}
  & i\partial_t u + \partial_x^2 u - \Ga \de u + |u|^{p-1}u=0, \qquad (t,x) \in I \times \mathbb{R} \\
  & u(t_0,x) = u_0(x)
\end{split}
\end{equation}
where $I \ni t_0$ denotes an open interval, and we assume
\[
  \Ga \geq 0, \qquad  
\]
which means the delta potential is repulsive (rather than attractive), and
\begin{equation} \label{pass}
  p > 2, \quad p \not= 5.
\end{equation}
Equation~\eqref{NLS} with $\Ga=0$ is a standard model in quantum physics
and optics, and is well-studied as a model of nonlinear dispersive waves more
generally (see, eg, \cite{SS,Fib}). 
The delta potential describes an additional
(repulsive) point force on the system.

The (non-negative) self-adjoint (on $L^2(\R)$) 
operator $-\p_x^2 + \Ga \de$ appearing here may be defined by
\begin{equation} \label{delta}
\begin{split}
  & (-\p_x^2 + \Ga \de) f (x) = - f''(x), \\
  f \in \D^\Ga := &\{ f \in H^1(\R) \cap H^2(\R \backslash \{0\}) \; | \; 
  f'(0+)-f'(0-) = \Ga f(0) \}, 
\end{split}
\end{equation}
or via the quadratic form
\[
  q^\Ga (g, f) = 
  \int_\R \bar{g'}(y) f'(y) dy + \Ga \bar{g}(0)f(0), \qquad 
  f, g \in H^1(\R). 
\]
Note that
\begin{equation} \label{IBP}
  f \in \D^\Ga, \; g \in H^1 \; \implies \; 
  q^\Ga(g,f) = ( g, -f'' ).
\end{equation}

It is known (\cite{GiVe79,Caz03,LiPo15}) that the Cauchy problem~\eqref{NLS} is locally well-posed in the energy space $H^1(\mathbb{R})$: 
if $u_0 \in H^1(\mathbb{R})$, there exists 
a maximal time interval $I = (T_-,T_+) \ni t_0$,
$T_{\pm} = T_{\pm}(u_0,t_0)$,
and a unique solution $u \in C(I;H^1(\mathbb{R}))$ of~\eqref{NLS},
such that if $T_+ < \infty$, then $\limsup_{t \to T_+-} \| u(t,\cdot) \|_{H^1} = \infty$ (and similarly for $T_-$),
and whose energy and mass,
\[
  E^\Ga(u) = \frac{1}{2} q^\Ga(u,u)
  - \frac{1}{p+1} \|u\|_{p+1}^{p+1}, \qquad 
  M(u) =  \frac{1}{2} \| u\|_{2}^2,
\]
are conserved:
\begin{equation} \label{conservation}
  E^\Ga(u(t,\cdot)) \equiv E^\Ga(u_0),
  \qquad M(u(t,\cdot)) \equiv M(u_0).
\end{equation}
Additionally,
\[
  u_0 \in D^\Ga \; \implies \; 
  u \in C(I;D^\Ga) \cap C^1(I,L^2(\R)).
\]

When $\Ga = 0$,~\eqref{NLS} admits the {\it soliton} solution 
\begin{equation} \label{soliton}
  u(t,x) = e^{it} Q(x), 
\end{equation}
where 
\begin{equation}  \label{Qform}
  Q(x)= c_p 
  \left[ 2 \cosh \left(\frac{p-1}{2} x \right) \right]^{-\frac{2}{p-1}},
  \qquad  c_p = \left[ 2(p+1) \right]^{\frac{1}{p-1}}
\end{equation}
is positive, even, and solves the ODE
\begin{equation} \label{ode}
  -Q'' + Q -Q^{p} = 0.
\end{equation}
The exponential decay of $Q(x)$ as $|x| \to \infty$, 
\begin{equation} \label{asy}
  |Q(x) - c_p e^{-|x|}| + 
  |Q'(x) + c_p \frac{x}{|x|} e^{-|x|}|
  \lec e^{-p|x|},
\end{equation}
plays an important role in this paper,
and we will also make occasional use of the first-order relation
\begin{equation} \label{relation}
  (Q'(x))^2 + \frac{2}{p+1} Q^{p+1}(x) = Q^2(x) 
\end{equation}
With $\Ga = 0$, equation~\eqref{NLS} enjoys several invariances:
\begin{equation} \label{fullinvar}
\begin{split}
  &\mbox{phase } \; u(t,x) \mapsto e^{i \gat_0} u(t,x)  \;\; (\gat_0 \in \R); \quad
  \mbox{scale } \; u(t,x) \mapsto 
  \omega^{\frac{1}{p-1}} u(\omega t, \sqrt{\omega} x) \;\; (\omega > 0) \\
  &\mbox{translation } \; u(t,x) \mapsto u(t, x-z_0) \;\; (z_0 \in \R); \;\; 
  \mbox{Galilean } \;  u(t,x) \mapsto
  e^{i(vx-v^2 t)} u(t,x-2vt) \;\; (v \in \R)
\end{split}
\end{equation}
We note here that the addition of a potential, $\Ga \not= 0$, preserves phase invariance, but destroys the translation, Galilean and scale
invariances. A feature of the delta potential, however, is that a version of scale invariance can be recovered 
by adjusting the potential strength:
\begin{equation} \label{scaleinv}
  u(t,x) \mbox{ solves~\eqref{NLS} }
  \implies \omega^{\frac{1}{p-1}} u(\omega t, \sqrt{\omega} x)
  \mbox{ solves~\eqref{NLS} with }
  \Ga \mapsto \frac{\Ga}{\sqrt{\omega}}.
\end{equation}
Applying the symmetries~\eqref{fullinvar} to~\eqref{soliton} generates a 4-parameter family of moving soliton solutions of~\eqref{NLS} with $\Ga=0$:
\begin{equation} \label{single}
  u(t,x) = e^{i (v x + \gat(t))} 
  Q_{\omega}(x-z(t)),
  \quad Q_{\omega} (\cdot) = \omega^{\frac{1}{p-1}} Q(\sqrt{\omega} \cdot),
  \quad z(t) = 2v t + z_0, \; 
  \gat(t) = (\omega - v^2) t + \gat_0.
\end{equation}

In addition to the single soliton solutions~\eqref{single}, {\it multi-soliton}
solutions of~\eqref{NLS} with $\Ga=0$ have been constructed
(eg \cite{CMM,Com,MM}). These converge in $H^1$ as $t \to \infty$ to sums of single solitons with distinct velocities, hence are separating linearly in time and so are 
(asymptotically) non-interacting.
More recently, \cite{Ngu19} constructed rather 
special solutions of~\eqref{NLS} with $\Ga=0$
(and its higher-dimensional versions) which satisfy
\[
  \left\| u(t,\cdot) - e^{i \gat(t)} \left[ Q \left( \cdot -\frac{z(t)}{2} \right) + Q \left( \cdot +\frac{z(t)}{2} \right)
  \right] \right\|_{H^1} \lec \frac{1}{t}, \qquad
  z(t) = 2 \log t (1 + o(1))
\]
as $t \to \infty$.
Here the two solitons are {\it not} asymptotically non-interacting. 
Rather, their delicate interaction (through their exponentially small tails) 
provides an attraction which slows their escape and produces 
the logarithmic (rather than linear) separation rate. In the completely integrable case $p=3$ (and one dimension), such solutions, called double-pole solutions, were already known via integrable machinery \cite{ZS,Olm}.
For related constructions involving strong interactions between solitons or with potentials, see \cite{Jen,KMR,MM2,NR,Ary,IN}, as well as
the introduction to~\cite{Ngu19} for further references.

\subsection{Main results}

The primary motivation for the present work is to investigate
whether such logarithmically separating multi-solitons persist
in the presence of a potential forcing. Our main result is that
for a repulsive delta potential which is not too strong, they do:
\begin{theorem} \label{existence}
Assume~\eqref{pass} and $\Ga < \frac{3}{2}$. Then there exists an even solution
$u$ of~\eqref{NLS}, defined for (at least) $t \geq 0$,
and satisfying, for some $\gat(t) \in \R$,
\begin{equation} \label{logsep}
\begin{split}
  &\left\| u(t,\cdot) - e^{i \gat(t)} \left[ Q \left(\cdot-\frac{z(t)}{2} \right) + Q\left(\cdot+\frac{z(t)}{2}\right)
  \right] \right\|_{H^1} \lec \frac{1}{t}, \\
  & \qquad \qquad |z(t) - 2 \log t| \lec 1.
\end{split}
\end{equation}
\end{theorem}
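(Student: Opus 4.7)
The plan is to adapt the limiting construction of~\cite{Ngu19} to the delta-potential setting. For a sequence $T_n\to\infty$, I solve~\eqref{NLS} backwards from a two-soliton terminal datum, prove uniform-in-$n$ bounds on $[0,T_n]$, and extract a weak $H^1$ subsequential limit on compact time intervals. Since both $Q$ and $\Ga\de$ are even, the even subspace of $H^1$ is invariant under~\eqref{NLS}; working there eliminates the translation and Galilean degrees of freedom from the outset and leaves only a phase $\gat(t)$ and a half-separation $y(t)=z(t)/2$ to modulate. The ansatz is accordingly
\begin{equation*}
  V(t,x) = e^{i\gat(t)}\bigl[Q(x-y(t))+Q(x+y(t))\bigr].
\end{equation*}

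The modulation ODEs for $(\gat,y)$ arise from imposing orthogonality of $\varepsilon:=u-V$ to $\p_y V$ and $iV$ and differentiating these conditions in time. The residual $S:=i\p_t V+\p_x^2 V-\Ga\de V+|V|^{p-1}V$ contains, in addition to the usual attractive two-soliton interaction of order $e^{-2y}$ (computed via~\eqref{asy}--\eqref{relation} as in~\cite{Ngu19}), a new distributional term $-2\Ga Q(y)e^{i\gat}\de_0$, arising because $V\notin\D^\Ga$ (its derivative has no jump at $x=0$). Using $\p_y V(0)=2e^{i\gat}Q'(y)\sim-2c_p e^{-y}e^{i\gat}$ from~\eqref{asy}, this delta contribution projects on $\p_y V$ as $-4\Ga Q(y)Q'(y)\sim 4\Ga c_p^2 e^{-2y}$, and is \emph{repulsive}. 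The reduced ODE therefore takes the form $\ddot y\simeq -A(\Ga)e^{-2y}$; a careful evaluation of the constants---reducing all $p$-dependent integrals via~\eqref{ode},~\eqref{relation}, and the boundary identity $Q(0)^{p-1}=(p+1)/2$---identifies the sharp threshold $\Ga=3/2$ at which $A$ vanishes. For $\Ga<3/2$ one has $A(\Ga)>0$, and the ODE analysis in~\cite{Ngu19} then yields $y(t)=\log t+O(1)$ together with $\dot\gat=1+O(1/t^2)$.

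The error $\varepsilon$ is then controlled via the Weinstein-type quadratic form
\begin{equation*}
  \mathcal{F}(\varepsilon) = \tfrac12 q^\Ga(\varepsilon,\varepsilon)+\tfrac12\|\varepsilon\|_{L^2}^2-\tfrac12\int|V|^{p-1}|\varepsilon|^2\,dx-\tfrac{p-1}{2}\int|V|^{p-3}\bigl[\re(\bar V\varepsilon)\bigr]^2\,dx,
\end{equation*}
which differs from its $\Ga=0$ counterpart only by the favourable, nonnegative boundary term $\tfrac{\Ga}{2}|\varepsilon(0)|^2$ hidden in $q^\Ga$. Under the orthogonality conditions supplied by the modulation, and supplemented by the conservation laws~\eqref{conservation} to control the symmetric scaling direction (a single negative eigenmode of the linearized operator in the even subspace), the $\Ga=0$ two-soliton coercivity estimate $\mathcal{F}(\varepsilon)\gec\|\varepsilon\|_{H^1}^2$ persists uniformly for $y\gg 1$. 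Since $S=O(e^{-2y})=O(1/t^2)$, a standard bootstrap launched from $\varepsilon(T_n,\cdot)=0$ delivers $\|\varepsilon(t)\|_{H^1}\lec 1/t$ on $[0,T_n]$ uniformly in $n$; a diagonal weak-$H^1$ extraction then produces the desired even solution $u$ on $[0,\infty)$, and~\eqref{logsep} follows.

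The main technical obstacle I anticipate is the sharp identification of the $\Ga<3/2$ threshold: the constants entering $A(\Ga)$ must be computed precisely enough to recover $3/2$ independently of $p$, since any computational slip would yield a spurious $p$-dependence contradicting the theorem. A subsidiary concern is verifying that the standard two-soliton coercivity passes through unchanged in the presence of $q^\Ga$; since the added boundary term has the favourable sign, this amounts to checking that the orthogonality conditions and the mass-conservation argument controlling the scaling direction are insensitive to the potential.
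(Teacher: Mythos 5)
Your outline (solving backward from terminal data, modulation, force law, bootstrap, compactness) matches the paper's, but it omits the central technical idea, and this omission is fatal. You project the residual and the remainder equation onto the free translational mode $Q'$ (equivalently $\p_y V$), whereas the paper replaces $Q'$ by the perturbed translational eigenfunction $T_z$ of the delta-shifted operator $L^+_z=L^++\Ga\de_{-z/2}$ (Proposition~\ref{efprop}). The reason this is indispensable is pinpointed in Remark~\ref{replacement}: when the localized-momentum functional is differentiated and the remainder equation is paired with $Q'$, the delta contributes a term proportional to $\Ga\,\varepsilon(0)\,Q'(z/2)=O(\|\varepsilon\|_{H^1}e^{-z/2})=O(1/t^2)$, which is exactly the size of the leading force terms you are trying to isolate; $\varepsilon(0)$ is not controlled by any orthogonality condition, so the estimate does not close. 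Pairing with $T_z$ instead turns this into $\nu_z\lan\varepsilon,T_z\ran=O(e^{-z}\|\varepsilon\|_{H^1})$, which is harmless, and it is the modified force law~\eqref{newforce} involving $T_z(-z/2)$---not a naive $Q'$ projection---that actually governs the dynamics.

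Your second claim, that "a careful evaluation of the constants... identifies the sharp threshold $\Ga=3/2$," cannot be rescued. If one carries out the projection you describe with $T_z$ replaced by $Q'$, i.e.\ $T_z(-z/2)\approx c_p e^{-z/2}$, then the net force coefficient is $\propto(\Ga-1)$, vanishing at $\Ga=1$, not $3/2$. The paper's $\Ga<3/2$ is a \emph{sufficient} condition coming from the lower bound on $T_z(-z/2)$ in~\eqref{Ttight}, which in turn rests on the non-optimal two-sided variational estimate~\eqref{nutight} for the eigenvalue $\nu_z$; there is no clean, $p$-independent constant $A(\Ga)$ that vanishes exactly at $3/2$, and the paper conjectures the actual threshold is $\Ga=2$, with nonexistence proved only for $\Ga>2$. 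Two further gaps: since your ansatz $V\notin\D^\Ga$, the delta becomes a distributional forcing that pollutes the energy estimate --- the paper's cut-off $P=\chi P^0$ exists precisely to avoid this (Remark~\ref{cutoffrem}); and for $p>5$ the coercivity fails without additional orthogonality against the unstable eigendirections $Y^\pm$ and a topological choice of final data (Sections~\ref{supercrit} and~\ref{super}), which your proposal does not address.
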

On the other hand, we show that if the potential repulsion is
slightly stronger, we do not have such solutions:
\begin{theorem} \label{nonexistence}
Assume~\eqref{pass} and $\Ga > 2$. Then there exists no solution of~\eqref{NLS}
satisfying~\eqref{logsep}.
\end{theorem}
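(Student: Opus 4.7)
The plan is to derive a contradiction via a half-line momentum identity sensitive to the repulsive delta. Assume $u$ satisfies \eqref{logsep} for some $\Ga > 2$; for simplicity assume $u$ is even (the general case being similar). Consider
\[
    J(t) := \int_0^\infty x\,|u(t,x)|^2\,dx, \qquad P_+(t) := \im\int_0^\infty \bar{u}(t,x)\, u_x(t,x)\,dx.
\]
The continuity equation for $|u|^2$ (the delta contributes nothing at $x=0$ since $|u(0)|^2$ is real) gives $J'(t) = 2P_+(t)$. Integration by parts on $\{x > 0\}$, using $iu_t = -u_{xx} - |u|^{p-1}u$ in the interior together with the jump condition $u_x(0^+)=\tfrac{\Ga}{2}u(0)$ (valid for even $u \in \D^\Ga$), yields
\[
    P_+'(t) = \tfrac{\Ga^2}{4}|u(0)|^2 + \tfrac{2}{p+1}|u(0)|^{p+1} - \im[\bar u(0)\, u_t(0^+)].
\]
Writing $u(t,0) = a(t)\, e^{i\phi(t)}$, the last term equals $a^2\dot\phi$, so
\[
    P_+'(t) = \tfrac14\bigl(\Ga^2 - 4\dot\phi(t)\bigr)|u(0)|^2 + \tfrac{2}{p+1}|u(0)|^{p+1}.
\]

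The mechanism driving the contradiction is as follows. A modulation analysis---writing $u = R_{z,\gat,v} + \eta$ with
\[
    R_{z,\gat,v}(x) = e^{i\gat+iv(x-z/2)}Q(x-z/2) + e^{i\gat-iv(x+z/2)}Q(x+z/2),
\]
and imposing orthogonality conditions on $\eta$ relative to the generators of the soliton family---should yield $\dot\gat(t) \to 1$ (the natural soliton frequency), $v(t) \sim \dot z(t)/4 = O(1/t)$, and the pointwise estimate $\eta(t,0) = o(Q(z(t)/2))$. Consequently $\dot\phi(t) \to 1$ and $|u(t,0)|^2 = 4Q(z/2)^2(1+o(1)) \sim 4c_p^2/t^2$. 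For $\Ga > 2$ one has $\Ga^2 - 4 > 0$, so
\[
    P_+'(t) \geq \frac{c}{t^2} \qquad \text{for all } t \geq T_0,
\]
for some $c > 0$. Meanwhile, the main two-soliton part $u_{\mathrm{main}} = e^{i\gat}[Q(\cdot-z/2)+Q(\cdot+z/2)]$ is real up to a global phase and hence contributes nothing to $P_+$; combined with $\|u-u_{\mathrm{main}}\|_{H^1} \lec 1/t$ this gives $|P_+(t)| \lec 1/t \to 0$. Integrating the lower bound on $P_+'(s)$ from $t$ to $\infty$ yields $P_+(t) \leq -c/t$, whence $J'(t) \leq -2c/t$ and
\[
    J(t) \leq J(T_0) - 2c\log(t/T_0) \xrightarrow{t\to\infty} -\infty,
\]
contradicting $J(t) \geq 0$.

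The hard step is the rigorous execution of the modulation analysis. Because $\|\eta\|_{H^1}$ and $Q(z/2)$ are both of order $1/t$, the naive Sobolev embedding $|\eta(0)|\lec\|\eta\|_{H^1}$ alone does not force $\eta(0) = o(Q(z/2))$, so one cannot immediately pin down the leading-order values of $|u(0)|^2$ and $\dot\phi$. Extra structure---an appropriate choice of orthogonality conditions tailored to the directions that contribute at the origin, together with a bootstrap analysis of the equation satisfied by $\eta$---is needed to secure the sign of $P_+'(t)$ and close the virial argument.
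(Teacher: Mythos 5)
Your proposal takes a genuinely different route from the paper. You exploit the jump condition at the origin directly through a half-line momentum/virial identity, whereas the paper works entirely away from the origin: it derives a modified force law~\eqref{vdyn2} for the soliton velocity $v(s)$, shows via the localized momentum functional that $\Ga > 2$ makes the net soliton interaction attractive ($f(z) < 0$ in~\eqref{sign}), integrates to conclude $v(s) \lec -\log(s_{\mathrm f}/s)$, and then contradicts the bootstrap bound $|z(s) - 2\log s| \leq 3c_2$. Your identity
\[
  P_+'(t) = \tfrac14\bigl(\Ga^2 - 4\dot\phi(t)\bigr)|u(t,0)|^2 + \tfrac{2}{p+1}|u(t,0)|^{p+1}
\]
is correct (modulo technicalities about the pointwise sense of $u_t(0)$ and the finiteness of $J(t)$, both fixable with cutoffs) and it is genuinely interesting that the threshold $\Ga^2/4 > 1$ it produces coincides exactly with the conjectured sharp value $\Ga = 2$.

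However, you have correctly identified the gap yourself, and it is a genuine one, not just a hard step to be filled in routinely. The contradiction requires $|u(t,0)|^2 \gec t^{-2}$ together with $\dot\phi(t) \to 1$, where $\phi$ is the phase of $u(t,0)$. From~\eqref{logsep} one gets only $|u(t,0) - 2e^{i\gat(t)}Q(z/2)| \lec t^{-1}$, which is the \emph{same size} as $2Q(z/2) \sim 2c_p/t$; so the error could wipe out the main term entirely, $u(t,0)$ could vanish at some sequence of times (making $\phi$ undefined), and in any case $\phi$ need not track $\gat$. Worse, this is not a deficiency of the naive Sobolev bound that a cleverer modulation fixes: the orthogonality conditions available (such as $\lan\eta,Q\ran=\lan\eta,yQ\ran=\lan\eta,i\La Q\ran=\lan\eta,iT_z\ran=0$) are $L^2$ inner products against functions that are exponentially small near the origin (size $e^{-z/2}\sim 1/t$), so they constrain the average behaviour of $\eta$ near the soliton centres but impose essentially no constraint on the value $\eta(-z/2)$. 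Indeed in the paper's own decomposition the approximate solution $P$ is cut off to vanish at $x=0$, so $u(t,0)$ equals (a phase times) the \emph{remainder} evaluated at the origin; there is no mechanism in the modulation framework that pins this down to leading order. This is exactly why the paper's proof avoids pointwise information at the delta: it channels the effect of the potential through the perturbed eigenfunction $T_z$ and the force law, which are integral quantities, and the localized momentum $\M(s)$ is supported in $|y| \lesssim \log s$ around the soliton, i.e.\ away from the delta.

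Unless you can supply an argument giving the lower bound $|u(t,0)| \gec t^{-1}$ and the phase asymptotic $\dot\phi \to 1$ (for which no known orthogonality or bootstrap seems to suffice), the proposal does not close. As written it identifies a beautiful heuristic pointing at the threshold but does not constitute a proof.
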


\begin{remark}
 These results show that the interplay between the soliton-soliton
 attraction and the potential-soliton repulsion is somewhat delicate.
 We conjecture that the existence/non-existence threshold 
 potential strength should actually be $\Ga = 2$.
\end{remark}
\begin{remark}
By using the scaling~\eqref{scaleinv}, for any $\omega > 0$, assuming
$\Ga < \frac{3}{2} \sqrt{\omega}$,
the conclusion of Theorem~\ref{existence} holds with $Q \mapsto Q_\omega$ and with $|z(t) - \frac{2}{\sqrt{\omega}} \log t| \lec 1$. And a non-existence analogue of Theorem~\ref{nonexistence} holds for
$\Ga > 2 \sqrt{\omega}$.
\end{remark}
\begin{remark}
By the conservation laws~\eqref{conservation} and the convergence~\eqref{logsep}, 
the action of a solution satisfying~\eqref{logsep} is
\begin{equation} \label{action}
  S^\Ga(u) = E^\Ga(u) + M(u) = 2 E^0(Q) + 2 M(Q) = 2S^0(Q)
\end{equation}
which, for $\Ga \geq 2$ is a threshold level of the action for even functions in the sense
\[
  \Ga \geq 2 \; \implies \; 
  \inf \{ S^\Ga(f) \; | \; f \in H^1_{even}(\R) \backslash \{0\}, 
  \; N^\Ga(f) = 0 \} = 2 S^0(Q),
\]
where
\[
  N^\Ga(f) = \| f' \|_2^2 + \Ga |f(0)|^2  -\|u\|_{p+1}^{p+1}
\]
is the Nehari functional (see \cite{FJ}).
For $\Ga < 2$, however, this threshold action is lower, and attained by an even ground state soliton $Q_\Ga$:
\[
  S^\Ga(Q_\Ga) < 2S^0(Q). 
\]
When $p > 5$, even solutions with action below this threshold were shown in~\cite{II}
either to scatter (to zero) or blow-up in
finite time,
while those at the threshold are classified in~\cite{GI1} for $\Ga < 2$
(where the ground state $Q_\Ga$ soliton and its stable manifold are additional dynamical possibilities), and in~\cite{GI2} for $\Ga \geq 2$; the latter result considerably strengthens Theorem~\ref{nonexistence} for $p > 5$, showing that all solutions with action satisfying~\eqref{action} either scatter 
or blow-up in finite time.
\end{remark}
\begin{remark}
In~\eqref{pass}, we have assumed $p > 2$ in order to simplify computations in many places
and to avoid having to further modify our approximate
solution -- see~\cite[Section 5]{Ngu19} for a discussion of
the complications that arise for $1 < p \leq 2$, and how
to address them. We also avoid  the mass-critical case $p=5$, for which solutions
satisfying~\eqref{logsep} are inconsistent with the virial identity (see~\cite[Remark 2]{Ngu19}), and for which different phenomena occur (see~\cite{MR}).
\end{remark}

\subsection{Strategy and guide}

Our proof of Theorem~\ref{existence} closely follows~\cite{Ngu19} in its outline, the steps of which we now summarize.
The PDE~\eqref{NLS} is solved backwards from a
sequence of final times tending to infinity, with final data
given by an approximate $2$-soliton solution defined in
Sections~\ref{parameters}-\ref{ourapprox}. Each of these solutions is expressed, using standard modulation theory (Section~\ref{parmod}), as a remainder from an approximate solution whose parameters (scale, phase, position, velocity) are time-modulated to ensure that the remainder
satisfies desired orthogonality conditions, and that the solitons are accelerating according to a carefully chosen force law, derived in Section~\ref{law}, which captures the main inter-soliton (and, in our case, potential-soliton) 
interactions, and is consistent with logarithmic separation. 
The heart of the argument is to show that the final soliton positions and velocities can be chosen so that uniformly, on intervals with fixed initial time, the remainders decay (in $H^1$) like $1/t$ and the parameters satisfy appropriate
estimates (consistent with logarithmic separation) -- these
are Propositions~\ref{uniform} (for $p<5$) and~\ref{uniform2} (for $p > 5$). Given these uniform estimates, the construction of Theorem~\ref{existence} follows from a compactness argument given in Sections~\ref{subcrit} ($p < 5$) and~\ref{supercrit} ($p > 5$). The uniform 
estimates are proved in Section~\ref{uniformsec},
again following the strategy of~\cite{Ngu19}:
estimates of the modulation parameters are 
obtained by taking suitable inner-products with 
the PDE in Section~\ref{modparest};
the inner-product of the remainder with the translational zero-mode (which is not fixed
by modulation) is controlled using a localized
momentum functional, together with the force law, in Section~\ref{momentum}. The remainder
is controlled by an energy estimate in 
Sections~\ref{enest}-\ref{coercive}; lastly,
the final soliton positions (and velocities, via
the force law) are chosen by a topological 
argument in Section~\ref{topology}. Section~\ref{super} shows how to adapt these
arguments for $p > 5$.

However, our proof requires a major adaptation of the arguments
of~\cite{Ngu19} to accommodate the effect of the potential -- essentially because
the potential-soliton interaction and
the soliton-soliton interaction are the
same size. In~\cite{Ngu19}, to control the remainder by a coercive energy functional, it is necessary to control an inner-product
\begin{equation} \label{oldinner}
  \lan \eta, \; i Q' \ran
\end{equation}
(which is not fixed by parameter modulation),
where $\eta$ is the remainder (the difference between the true and approximate solutions, expressed in the
frame of one of the solitons), and we note
that $Q'$ is the translational zero-eigenfunction:
\[
  L^+ Q' = 0, \qquad L^+ = -\p_x^2 +1 - pQ^{p-1}.
\]
Quantity~\eqref{oldinner} is controlled by a localized momentum functional, provided the leading term in the equation for its time derivative is removed by imposing a motion law
\begin{equation} \label{oldlaw}
  \frac{1}{2} M(Q) \;  \dot v = -H(z),
\end{equation}
where $z$ is the soliton position,
$v$ the velocity, and 
\begin{equation} \label{oldforce}
  H(z) \approx 2 c_p^2 e^{-z}
\end{equation}
is an integral (see~\eqref{Hdef}) capturing the interaction between the solitons.
The potential, though, interacts with the remainder $\eta$ to produce
an additional term of the same
size as the leading order~\eqref{oldforce}.
To remove this contribution, we replace
$Q'$ in~\eqref{oldinner} by the perturbed
translational eigenfunction
\[
  \left( L^+ + \Ga \de_{-\frac{z}{2}} \right) T_z = \nu_z T_z
\]
(the delta here is shifted to $x = -\frac{z}{2}$ since we are in the 
frame of one of the solitons),
and replace the quantity~\eqref{oldinner} with
\begin{equation} \label{newinner}
  \lan \eta, \; i T_z \ran,
\end{equation}
which may be controlled -- see Remark~\ref{replacement} for the exact
place this replacement is used.
In controlling~\eqref{newinner}, however,
interaction between $T_z$ and the soliton forces us to replace the motion law~\eqref{oldlaw} with one that captures
the additional effect of the potential:
\begin{equation} \label{newforce}
  \frac{1}{2} M(Q) \;  \dot v = -H(z)  
  + 2 \Ga c_p e^{-\frac{z}{2}} T_{z}\left(-\frac{z}{2}\right).
\end{equation}
That~\eqref{newforce} is the correct choice is encapsulated in the key estimate
of Proposition~\ref{Einnerest2}, and its
use in the localized momentum argument -- see Remark~\ref{modification}.

To carry out the program of estimates outlined above with these modifications, we need quite precise estimates of
the perturbed eigenfunction $T_z$ and its eigenvalue
$\nu_z$, which are given in Section~\ref{perturbed}. Their proofs, fairly involved, occupy Section~\ref{properties}.
Though the perturbing potential is of size $\Ga$, hence not small,
the fact that the unperturbed eigenfunction $Q'$ is small on its support allows us to estimate to the necessary precision.
One implication -- see~\eqref{Ttight} -- is that $T_z(-z/2)$ is of size $e^{-\frac{z}{2}}$. This shows that the second term (representing the soliton-potential interaction) on the right side of~\eqref{newforce} is the same size as the
first (representing the soliton-soliton interaction). Another implication is that
the net interaction in~\eqref{newforce} may be repulsive or attractive depending on the strength $\Ga$ of the potential -- see~\eqref{sign}. This is where the conditions $\Ga < \frac{3}{2}$ of Theorem~\ref{existence} and $\Ga > 2$ of
Theorem~\ref{nonexistence} enter.
The latter is proved in Section~\ref{non} by combining similar estimates to those
outlined above for the construction, with
the net attractivity of the force law~\eqref{newforce}, to derive a contradiction.

Though the fact that our potential is a delta function provides certain advantages, notably 
in establishing the precise eigenfunction estimates of Proposition~\ref{efprop}, 
it also presents several technical obstacles. 
First, we must cut-off our approximate $2$-soliton solution near the origin (see
Section~\ref{ourapprox}).
This both ensures that it lies in the domain of the linear operator in the PDE,
and, at a technical level, avoids a problematic singular contribution to the energy estimates of Section~\ref{enest} (see Remark~\ref{cutoffrem}). This cut-off creates additional terms to deal with in most of the estimates. Indeed, it plays a key role in the modified force law (see in particular Lemma~\ref{deltacomp} and its proof). 
As a knock-on effect of the cut-off, we have to modify the energy function of~\cite{Ngu19} -- see~\eqref{en} and Remarks~\ref{endef} and~\ref{cutpoint}.
Second, potential interactions with soliton tangent directions (such as $y Q$, $\La Q$) have to be carefully tracked, and in places create logarithmic losses compared to~\cite{Ngu19} -- see, eg, Remarks~\ref{logloss} and~\ref{logloss2}.
Third, the regularity of solutions to~\eqref{NLS} with $\Ga \not= 0$ seems
insufficient to use~\cite{Ngu19}'s proof of the modulation Lemma~\ref{modulation}
-- see Remark~\ref{modproofrem} -- 
so we provide a different proof in Section~\ref{modproof}. 
Finally, the compactness argument of Section~\ref{subcrit} requires continuous
dependence in fractional Sobolev spaces for solutions of~\eqref{NLS} with $\Ga \not= 0$ -- see equation~\eqref{Hsigmaconv} -- for which we
refer to~\cite{GIS}.

\subsection{Notation}

As usual, we use the notation 
\[
  A \lec B
\]
if there is a constant $C > 0$, independent of any 
relevant parameters, such that $A \leq C B$. 

Lebesgue norms are denoted by
\[
  \| f \|_p = \| f \|_{L^p(\R)} = \left(\int_{\R} |f(x)|^p dx \right)^{\frac{1}{p}}, \qquad
  \| f \|_\infty = \esssup\limits_{x \in \R} |f(x)|,
\]
and Sobolev norms by
\[
  \| f \|_{H^1} = \| f \|_2 + \| f' \|_2,
  \qquad
  \| f \|_{W^{1,\infty}} = \| f \|_\infty
  + \| f ' \|_\infty, \quad
  \mbox{etc.}
\]
We will routinely make use of the Sobolev 
embedding inequality
\[
  \| f \|_{\infty} \lec \| f \|_{H^1}.
\]
We denote the complex and real $L^2(\R)$ inner-products by
\[
  ( f , g ) = \int_{\mathbb{R}} \bar f(x) g(x) dx, \qquad 
  \lan f , g \ran = \re \int_{\mathbb{R}} \bar f(x) g(x) dx.
\]
The generator of $L^2$-scaling is denoted
\[
  (\La f )(x) = x f'(x) + \frac{2}{p-1} f(x).
\]

With $\D^\Ga$ defined as in~\eqref{delta}, we denote
\[
  \D^\Ga_z = \left\{ f \left( \cdot - z \right) \; | \; f \in D^\Ga \right\}
\]
for $z \in \R$.

We denote the nonlinear term of~\eqref{NLS} by
\[
  N(u) = |u|^{p-1} u,
\]
express its Taylor expansion as
\begin{equation} \label{taylor}
  N(u + \eta) = N(u) + N'(u) \cdot \eta + \frac{1}{2} \bar \eta \cdot
  N''(u) \cdot \eta + \cdots,
\end{equation}
with
\[
   N'(u) \cdot \eta = \frac{p+1}{2} |u|^{p-1} \eta + 
   \frac{p-1}{2} |u|^{p-1} \bar{\eta},
\]
and
\[
  \frac{1}{2} \bar \eta \cdot N''(u) \cdot \eta
  = \frac{1}{2}(p-1)|u|^{p-3} \bar{u} \eta^2 +
  (p-1)|u|^{p-3} u |\eta|^2 +
  \frac{1}{2}(p-1)(p-3)|u|^{p-5} u (\re  \bar{u} \eta)^2,
\]
and use the pointwise estimates
\begin{equation} \label{tay1}
  |N(u + \eta) - N(u) -  N'(u) \cdot \eta| \lec
  |u|^{p-2}|\eta|^2 + |\eta|^p,
\end{equation}
\begin{equation} \label{tay2}
  |N(u + \eta) - N(u) - N'(u) \cdot \eta - \frac{1}{2} \bar \eta \cdot
  N''(u) \cdot \eta| \lec
  |u|^{\max(p-3,0)} |\eta|^3 + |\eta|^p,
\end{equation}
which hold for $p > 2$.

\section{Approximate solution, perturbed eigenfunction, force law}

We begin by constructing an appropriate approximate solution, which is 
a variant of the one used in~\cite{Ngu19}.

\subsection{Parameters} \label{parameters}

We first introduce time-dependent scale and phase parameters, 
\[
  \la(s) > 0, \qquad \gat(s) \in \R,
\]
assumed to be $C^1$ on some interval $s \in \tilde I$,
and transform a solution $u$ of~\eqref{NLS} via
\begin{equation} \label{scalephase}
  e^{-i \gat(s)} \la(s)^{\frac{2}{p-1}} u(t(s),\la(s) y) = 
  w(s,y), \qquad \frac{d}{ds}t(s) = \la^2(s).
\end{equation}
In terms of $w$,~\eqref{NLS} becomes
\begin{equation} \label{NLS2}
  i \p_s w + \p_y^2 w + \la \Ga \de w - w + |w|^{p-1} w
  - i \frac{\dot \la}{\la} \La w + (1 - \dot \gat) w = 0,
\end{equation}
where $\cdot$ denotes $\frac{d}{ds}$.
\begin{remark}
\eqref{NLS} retains the phase invariance of its $\Ga=0$
counterpart, but not the scale invariance, resulting in 
a factor of $\la$ appearing in front of the potential.
In terms of domains: 
\[
  u(t,\cdot) \in \D^\Ga \;\; \iff \;\;
  w(s,\cdot) \in \D^{\la(s) \Ga}.
\]
\end{remark}
Next we introduce time-dependent (again, $C^1$) centre and velocity parameters
\[
  z(s) > 0, \qquad v(s) \in \R.
\]
As a result of how our parameters will be chosen below -- via modulation Lemma~\ref{modulation} -- they will always satisfy 
\begin{equation} \label{sizes}
  z \geq 1, \quad |v| \leq 1, \quad \frac{1}{2} \leq \la \leq \frac{3}{2},
\end{equation}
which we will routinely use in our estimates below.

\subsection{The free approximate solution}

Given $z=z(s)$ and $v=v(s)$ as in~\eqref{sizes}, 
the construction of~\cite{Ngu19} begins with an approximate solution
\begin{equation} \label{P}
\begin{split}
  P^0(y;z,v)  &=
  e^{i \frac{v}{2}(y-\frac{z}{2})} Q(y - \frac{z}{2})
  + e^{-i \frac{v}{2}(y+\frac{z}{2})} Q(y + \frac{z}{2}) \\
  & =: \;\; P_+ \;\; + \;\; P_- 
\end{split}
\end{equation}
a superposition of solitons with positions $\pm \frac{z}{2}$
and velocities $\pm v$. As shown in~\cite[Lemma 6]{Ngu19}, 
the error produced when $P^0(y;z(s),v(s))$ is inserted into~\eqref{NLS2} with $\Ga=0$ is
\begin{equation} \label{error}
\begin{split}
  \cE_{P^0}^{\Ga=0} &= i \p_s P^0 + \p_y^2 P^0 - P^0
  + |P^0|^{p-1} P^0 - i \frac{\dot \la}{\la} \La P^0 + (1 - \dot \gat) P^0 \\
  & \; = -\left( e^{i \frac{v}{2}(\cdot)} \vec{m} \cdot \vec{M} Q \right)(y - \frac{z}{2}) +
  \left( e^{-i \frac{v}{2}(\cdot)} \Om\vec{m} \cdot \vec{M} Q \right)(y + \frac{z}{2}) + G
\end{split}
\end{equation}
where
\begin{equation} \label{vector}
  \vec{m} =
  \left[ \begin{array}{c} 
  m_1 \\ m_2 \\ m_3 \\ m_4 \end{array} \right] =
  \left[ \begin{array}{c} 
  \frac{\dot \la}{\la} \\ \frac{\dot z}{2} - v + \frac{\dot \la}{\la}\frac{z}{2}
  \\ \dot \gat - 1 + \frac{v^2}{4} - \frac{\dot \la}{\la} \frac{v}{2} \frac{z}{2} 
  - \frac{v}{2} \frac{\dot z}{2} \\ \frac{\dot v}{2} - \frac{\dot \la}{\la} \frac{v}{2} \end{array} \right], \quad
  \vec{M} = \left[ \begin{array}{c} i \La \\ i \p_y \\
  1 \\ y \end{array} \right], \quad
  \Om = \left[ \begin{array}{rrrr}  -1 & 0 & 0 & 0 \\
  0 & 1 & 0 & 0 \\ 0 & 0 & -1 & 0 \\ 0 & 0 & 0 & 1
  \end{array} \right]
\end{equation}
and the nonlinear interaction term
\begin{equation} \label{inter}
  G = |P^0|^{p-1} P^0 - |P_+|^{p-1}P_+ - |P_-|^{p-1}P_-
\end{equation}
satisfies
\begin{equation} \label{intersize}
  \| G \|_{W^{1,\infty}} \lec e^{-z}.
\end{equation}
We note also that
\begin{equation} \label{P0sum}
  \left\| P^0(\cdot;z,v) - \left[  Q(\cdot - \frac{z}{2}) +  
  Q(\cdot + \frac{z}{2})\right] \right\|_{H^1} \lec |v|.
\end{equation}

A key computation from~\cite[Lemma 7]{Ngu19} shows that
\begin{equation} \label{interform}
  \l|\lan G, (e^{i \frac{v}{2}(\cdot)} Q')(\cdot- \frac{z}{2}) \ran
  - H(z)\r| \lec e^{-z} (v^2 z^2 + e^{-\frac{z}{2}})
\end{equation}
where
\begin{equation} \label{Hdef}
  H(z) = p \l[ \int_{-\frac{z}{2}}^\infty Q^{p-1}(y) Q'(y) Q(y+z) dy + \int_{-\infty}^{-\frac{z}{2}} Q^{p-1}(y+z) Q'(y) Q(y) dy \r].
\end{equation}
Using~\eqref{asy} and~\eqref{Qform}, we can easily obtain 
\begin{equation} \label{interform2}  
  |H(z) - 2 c_p^2 e^{-z}| \lec e^{-2z} + e^{-\frac{p+1}{2} z}.
\end{equation}
This is shown in Lemma~\ref{Ip}.
The significance of~\eqref{interform} is its role in
determining the attraction force between the solitons. 
More precisely, an estimate of the form
\begin{equation} \label{Einnerest}
\begin{split}
  & \left|  \left\lan \cE_{P^0}^{\Ga=0}, \; 
  \left( e^{i \frac{v}{2}(\cdot) } Q' \right) 
  \left(\cdot- \frac{z}{2} \right) \right \ran  - \left[
   \frac{M(Q)}{2} \left( \dot v - \frac{\dot \la}{\la} v \right)
  + H(z) \right] \right| \\
  & \qquad \lec e^{-z} \left( |\vec{m}| z^2 + v^2 z^2 + e^{-\frac{z}{2}} \right)
\end{split}
\end{equation}
is used in~\cite[Proof of Lemma 13]{Ngu19}, to determine
that the 2-soliton system motion law should be
\begin{equation} \label{law0}
  \dot v(s) = -\frac{2}{M(Q)} H(z(s)).
\end{equation}
We will see below
how this force law is modified in the presence of the potential.

\subsection{Our approximate solution} \label{ourapprox}

In the presence of the delta potential, we modify the
approximate solution slightly by cutting off the origin. 
To this end, fix a cut-off function 
\[
  \chi \in C^\infty(\R) \mbox{ even }, \;\;
  0 \leq \chi \leq 1, \;\;
  \chi \equiv 0 \mbox{ in } \{ |x| \leq 1 \}, \;\;
  \chi \equiv 1 \mbox{ in } \{ |x| \geq 2 \},
\]
and define
\begin{equation} \label{P3}
  P(y; z,v) = \chi(y) P^0(y; z,v).
\end{equation}
We note that $P$ is an even and smooth function of $y$, 
and by virtue of vanishing near the origin
\begin{equation} \label{approxdomain}
  P(\cdot; z,v)\in \D^\al \qquad \forall \;\; \al \in \R.
\end{equation}
Moreover,
\[
  \| P(\cdot;z,v) - P^0(\cdot;z,v) \|_{H^1} \lec
  \| P^0(\cdot;z,v) \|_{H^1[-2,2]} 
  \lec (1 + |v|) e^{-\frac{z}{2}},
\]
and so by~\eqref{P0sum},
\begin{equation} \label{Psum}
  \left\| P(\cdot;z,v) - \left[  Q(\cdot - \frac{z}{2}) +  
  Q(\cdot + \frac{z}{2})\right] \right\|_{H^1} \lec 
  |v| + e^{-\frac {z}{2}}.
\end{equation}

The error produced by substituting $P(y;z(s),v(s))$
into~\eqref{NLS2} is
\begin{equation} \label{error3}
\begin{split}
  \cE_{P} &= i \p_s P + \p_y^2 P - \la \Ga \de P - P
  + |P|^{p-1} P - i \frac{\dot \la}{\la} \La P + (1 - \dot \gat) P \\
  & \; = \chi \cE_{P^0}^{\Ga=0} + \chi (\chi^{p-1}-1) |P^0|^{p-1} P^0
  + 2 (\p_y \chi) \p_y P^0 + (\p_y^2 \chi) P^0 
  - i \frac{\dot \la}{\la} (y \p_y \chi) P^0. 
\end{split}
\end{equation}
Note that the delta term contributes nothing to the error, 
since $P$ vanishes near the origin.

\subsection{The perturbed translational eigenfunction} \label{perturbed}

In~\cite{Ngu19}, the attractive force between the solitons
is determined, as in~\eqref{interform}, 
through an inner product between the
nonlinear interaction term $G$ and the function $Q'$.
Because~\eqref{NLS} with $\Ga=0$ is invariant under spatial translation
(and as follows directly from differentiating~\eqref{ode}),
$Q'$ is a zero-eigenfunction of the Schr\"odinger operator
\[
  L^+ Q' = 0, \qquad L^+ = -\p_y^2 + 1 - p Q^{p-1},
\]
which appears in the linearization of~\eqref{NLS} with $\Ga=0$
around its soliton solution~\eqref{soliton}.
Including the potential, this operator becomes 
\begin{equation} \label{L+z}
  L^+_z = -\p_y^2 + 1 - p Q^{p-1} + \Ga \de_{-\frac{z}{2}}
\end{equation}
(here we have shifted $y$ so as to centre the right-hand soliton 
at the origin, so the delta potential is shifted to $y = -\frac{z}{2}$),
and so the eigenfunction is perturbed, and its eigenvalue is shifted.
This perturbed eigenfunction plays a key role in our analysis,
replacing $Q'$ in our analogue of~\eqref{interform}.
We need precise estimates:
\begin{proposition} \label{efprop}
Fix $\Ga > 0$. For all $0 < z$ sufficiently large, there exist
\[
  T_{z} \in \D(L^+_z) = \D^\Ga_{-\frac{z}{2}} \; \mbox{ and } \; \nu_{z} >0
\]
such that
\begin{equation} \label{eigen}
  L^+_{z} T_{z} = \nu_{z} T_{z}.
\end{equation}
The eigenvalue $\nu_z$ satisfies
\begin{equation} \label{nutight}
   \frac{1}{\Ga} \left[1 + \Ga - 
  (1 + 2\Ga)^{\frac{1}{2}} \right] - O(e^{-\frac{z}{2}})
  \leq \frac{\| Q' \|_2^2}{2 c_p^2} \frac{\nu_z}{e^{-z}} 
  \leq   \frac{\Ga}{\Ga+2} 
  + O(e^{-\min(\frac{1}{2},\frac{p-1}{p+3}) z}),
\end{equation}
and the derivative estimate
\begin{equation} \label{evz}
  |\p_z \nu_{z}| \lec \sqrt{\Ga}e^{-z}.
\end{equation}
The eigenfunction $T_z$ is real-valued, normalized as
\begin{equation} \label{normal}
  \| T_{z} \|_2 = \| Q' \|_2,
\end{equation}
and satisfies the norm estimate
\begin{equation} \label{esefest}
  \| T_{z} - Q' \|_{H^1 \cap L^1} \lec 
  \sqrt{\Ga} e^{-\frac{z}{2}},
\end{equation}
the $z$-dervative estimate
\begin{equation} \label{efz}
  \| \p_z T_{z} \|_{L^2} \lec \sqrt{\Ga} e^{-\frac{z}{2}},
\end{equation}
the pointwise estimates
\begin{equation} \label{pointwise}
  | T_{z}(y) - Q'(y) | \lec \sqrt{\Ga} \left\{ 
  \begin{array}{cc} 
  e^{-\sqrt{1-\nu_{z}}|y|} & |y| \geq \frac{z}{2} \\
  e^{-z} e^{-y} & -\frac{z}{2} \leq y \leq 0 \\
  e^{-z} \left( (y+z) e^{-y} + e^{-\frac{z}{2}} \right) & y \geq 0 
  \end{array} \right.,
\end{equation}
and the following refined estimate at $y=-\frac{z}{2}$:
\begin{equation} \label{Ttight}
  \frac{1}{\Ga}\left[ 1 + \Ga 
  -(1 +2\Ga)^{\frac{1}{2}} \right] - 
  O(e^{-\frac{z}{2}})
  \leq \left[1 - \frac{T_z(-\frac{z}{2})}{c_p e^{-\frac{z}{2}}} \right]
  \leq \frac{\Ga}{\Ga+2} + O(
  e^{-\min(\frac{1}{2},\frac{p-1}{p+3}) z}).
\end{equation}
\end{proposition}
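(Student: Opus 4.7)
My plan is to construct $(T_z, \nu_z)$ by a Lyapunov--Schmidt reduction around the zero mode $Q'$ of $L^+$, together with Green's-function estimates that exploit the separation between the delta at $-z/2$ and the soliton core. For existence: $L^+$ has a simple negative eigenvalue $\mu_0 < 0$ with even eigenfunction $\psi_0$, kernel $\Span(Q')$, and essential spectrum $[1,\infty)$. Since $\Ga|f(-z/2)|^2$ is a non-negative rank-one form perturbation, $L^+_z$ is self-adjoint on $\D^\Ga_{-z/2}$ with the same essential spectrum, and the min-max principle (applied to the two-dimensional test space $\Span(\psi_0, Q')$, using $L^+ Q' = 0$ and the exponential smallness of $\psi_0(-z/2)$) yields a second eigenvalue satisfying $0 < \nu_z \lec e^{-z}$, well separated from $\mu_0$ and from the threshold $1$. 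I take $T_z$ to be the corresponding eigenfunction, chosen real (since $L^+_z$ is real-symmetric) and normalized via~\eqref{normal} with sign so that $\lan T_z, Q'\ran > 0$.

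Next, the core reduction: decompose $T_z = aQ' + R$ with $R \in Q'^\perp$; normalization forces $a = 1 + O(\|R\|_2^2)$. Projecting the eigenvalue equation onto $\Span(Q')$ and $Q'^\perp$, and solving the range equation using invertibility of $(L^+ - \nu_z)|_{Q'^\perp}$, yields $R(y) = -\Ga\, T_z(-z/2)\, G^\perp(y, -z/2; \nu_z)$, with $G^\perp$ the reduced Green's function. Substituting back closes the system to the implicit equations
\[
  \nu_z \|Q'\|_2^2\, [1 + \Ga\, g(\nu_z, z)] = \Ga\, Q'(-\tfrac z2)^2,
  \qquad
  T_z(-\tfrac z2) = \frac{a\, Q'(-z/2)}{1 + \Ga\, g(\nu_z, z)},
\]
where $g(\nu, z) := G^\perp(-z/2, -z/2; \nu)$. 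The analytic heart of the argument is the estimate
\[
  g(\nu, z) = \tfrac{1}{2\sqrt{1-\nu}} + O(e^{-cz})
\]
in the relevant $\nu$-range, obtained from the spectral decomposition of $L^+$, the exponential decay of $\psi_0(-z/2)$, and the observation that, once the $Q'$-pole is removed, $G_{L^+}(-z/2,-z/2;\nu)$ differs from the free kernel $\tfrac{1}{2\sqrt{1-\nu}}$ of $-\p_y^2 + 1 - \nu$ only by soliton-tail corrections of size $e^{-cz}$, since $-z/2$ lies outside the effective support of $pQ^{p-1}$.

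With these formulas in hand, the bounds~\eqref{nutight} and~\eqref{Ttight} follow by inserting $Q'(-z/2)^2 = c_p^2 e^{-z} + O(e^{-(p+1)z/2})$ from~\eqref{asy} and solving self-consistently. The upper bound $\tfrac{\Ga}{\Ga+2}$ uses $g \geq \tfrac{1}{2}$ (immediate from $\tfrac{1}{2\sqrt{1-\nu}} \geq \tfrac{1}{2}$ for $\nu \geq 0$), while the lower bound $\tfrac{1+\Ga-\sqrt{1+2\Ga}}{\Ga} = \tfrac{\Ga}{1+\Ga+\sqrt{1+2\Ga}}$ comes from an a~priori upper bound on $g(\nu_z, z)$ obtained by feeding the rough bound on $\nu_z$ back through the formula, using the algebraic identity $(1+\Ga)^2 - (1+2\Ga) = \Ga^2$. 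The pointwise estimates~\eqref{pointwise} come from Green's-function bounds: $G^\perp(\cdot, -z/2; \nu_z)$ decays like $e^{-\sqrt{1-\nu_z}|y+z/2|}$ away from the delta and is modulated by the soliton tail in the interior region; integration yields~\eqref{esefest}. Differentiating the implicit equations in $z$ gives the $z$-derivative bounds: a Hellmann--Feynman-type identity produces~\eqref{evz}, and inversion of $(L^+_z - \nu_z)|_{T_z^\perp}$ produces~\eqref{efz}.

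The hardest part will be the reduced-Green's-function estimate, with sufficient precision in both $\nu$ and $z$ for all the stated estimates to close at the asserted rate. This calls for careful tracking of the continuous-spectrum contribution to the spectral representation of $(L^+-\nu)^{-1}$ at the far-field point $-z/2$, together with the small but nonzero overlap of $\psi_0$ there; the delta structure is a simplification, because only a single point value of the Green's function enters.
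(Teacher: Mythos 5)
Your strategy is genuinely different from the paper's in its core bookkeeping. The paper establishes the two--sided bound~\eqref{nutight} by combining a designed variational test function (to get the upper bound $\tau_z \leq \tfrac{\Ga}{\Ga+2}+\cdots$) with the full quadratic form identity $\|Q'\|_2^2\nu_z = -\al^2 e_0 + q^+(f,f) + \Ga T_z(-\tfrac z2)^2$ and the {\it inequality} $q^+(f,f)\geq 0$, which forces them to track only the quadratic relation $\tfrac\Ga2\rho_z^2 \leq \tau_z + \cdots$; the lower bound $\tfrac{1+\Ga-\sqrt{1+2\Ga}}{\Ga}$ is the root of the resulting quadratic. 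You instead derive two {\it linear} scalar relations: the exact $Q'$-pairing $\Ga T_z(-\tfrac z2)Q'(-\tfrac z2) = \nu_z\lan Q',T_z\ran$ (which the paper never uses directly), and the range equation $T_z(-\tfrac z2)(1+\Ga g)=a Q'(-\tfrac z2)$. This is a real insight: the $Q'$-pairing, being an eigenvalue equation projection rather than a full quadratic form, loses no information, and combined with the paper's own relation~\eqref{Tatz} it already yields $\tau_z = \tfrac{\Ga}{\Ga+2} + O(e^{-z/2})$, pinning $\nu_z$ asymptotically rather than only sandwiching it. Your Lyapunov--Schmidt closure would, if made rigorous, sharpen the stated result.

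That said, there are two concrete issues. First, your presentation is internally inconsistent: you assert $g(\nu,z)=\tfrac{1}{2\sqrt{1-\nu}}+O(e^{-cz})$, which (with $\nu_z\sim e^{-z}$) pins $\nu_z$ to a single value with exponentially small error, yet you then discuss the {\it two}--sided bound of~\eqref{nutight} as if it arises from your method, with the lower bound coming from an ``a priori upper bound on $g$.'' If your $g$-estimate holds, the lower bound of~\eqref{nutight} is not what you prove; if it doesn't hold, you haven't closed the argument. Second, and more importantly, the reduced Green's function estimate is where all the analytic work lives, and you have not actually proved it. The subtlety you underplay is the near-pole cancellation: for $\nu=\nu_z\sim e^{-z}$ the full Green's function $G(-\tfrac z2,-\tfrac z2;\nu)$ has a contribution $\tfrac{Q'(-z/2)^2}{-\nu\|Q'\|_2^2}=O(1)$ from the nearby $Q'$-pole, and $G^\perp$ is the difference of two $O(1)$ quantities each of which depends on the fine structure of the Jost solutions (the coefficients $a(\nu),\,b(\nu)$ in $f_+=af_-+bh_-$) at the far point $-z/2$. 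Moreover, the Taylor expansion $f_+(y;\nu)=Q'(y)+\nu\,\p_\nu f_+ +\cdots$ is {\it not} uniform in $y$ as $y\to-z/2$ with $\nu\sim e^{-z}$, since $Q'(y)$ and the $O(\nu)$ correction are comparable there; so the interior pointwise bounds in~\eqref{pointwise} require genuinely regionwise matched asymptotics, not a single perturbative formula. The paper's variation-of-parameters representation~\eqref{rep} (with its separate Wronskian-constant $\zeta$ controlled via the normalization and orthogonality) is effectively a hands-on version of the same computation, so the underlying estimates are close; the difference is that the paper never attempts to pin $g$ sharply and settles for the weaker quadratic inequality, while you would need the sharp value of $g$ with controlled error. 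Your outline of existence (min-max on $\mathrm{span}(\phi_0,Q')$) and the Hellmann--Feynman derivative bounds is essentially the paper's. The $L^1$ half of~\eqref{esefest} is not addressed and should be derived by integrating the pointwise bounds, as the paper does.
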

The proofs of these properties, which are somewhat involved,  
are given in Section~\ref{properties}.

\subsection{The modified force law} \label{law}

The main idea to capture the effect of the potential on the 
soliton dynamics is to replace $Q'$ in~\eqref{Einnerest}
with the perturbed eigenfunction $T_z$. The analogous estimate
is central to our analysis:
\begin{proposition} \label{Einnerest2}
\[
\begin{split}
  & \left|  \left\lan \cE_P, \; \left( e^{i \frac{v}{2}(\cdot)}  T_{z} \right) 
  \left(\cdot- \frac{z}{2} \right) \right\ran  - \left[
   \frac{M(Q)}{2} \left( \dot v - \frac{\dot \la}{\la} v \right)
  + H(z) - 2 \Ga c_p e^{-\frac{z}{2}} T_{z}(-\frac{z}{2}) \right] \right| \\
  & \qquad \lec e^{-z} \left( |\vec{m}| z^2 + v^2 z^2 + e^{-\frac{z}{2}} \right).
\end{split}
\]
\end{proposition}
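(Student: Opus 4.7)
The plan is to adapt the derivation of~\eqref{Einnerest} from \cite{Ngu19}, accounting for two modifications: the cut-off $\chi$ in $P = \chi P^0$, and the replacement of $Q'$ by the perturbed eigenfunction $T_z$. Setting $\tilde T_+(y) := (e^{i\frac{v}{2}(\cdot)}T_z)(y-\frac{z}{2})$ and $\tilde Q'_+(y) := (e^{i\frac{v}{2}(\cdot)}Q')(y-\frac{z}{2})$, and using $\cE_P = \chi\cE_{P^0}^{\Ga=0} + R_{cut}$ from~\eqref{error3} (where $R_{cut}$ collects the cut-off corrections), I decompose
\[
\lan\cE_P,\tilde T_+\ran = \lan\cE_{P^0}^{\Ga=0},\tilde Q'_+\ran + \lan\cE_{P^0}^{\Ga=0},\tilde T_+-\tilde Q'_+\ran + \lan R_{cut}+(\chi-1)\cE_{P^0}^{\Ga=0},\tilde T_+\ran.
\]
The first term is handled by~\eqref{Einnerest}, yielding $\frac{M(Q)}{2}(\dot v-\frac{\dot\la}{\la}v)+H(z)$ plus admissible error.

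For the second term, using the three-regime pointwise estimate~\eqref{pointwise} together with the reality of $\La Q$, $\p Q$, and $T_z-Q'$ (so that $\lan i\La Q,T_z-Q'\ran = \lan i\p Q,T_z-Q'\ran=0$), only the $Q$ and $yQ$ components of $\vec M Q$ contribute to the modulation pairing; the dominant contribution comes from the interior region $-\frac{z}{2}\leq y \leq 0$ where $|T_z-Q'|\lec \sqrt\Ga\,e^{-z}e^{-y}$, giving $|\vec m\cdot\lan\vec M Q,T_z-Q'\ran|\lec |\vec m|\sqrt\Ga z^2 e^{-z}$. The pairing with the interaction $G$ is bounded by $\|G\|_\infty\|T_z-Q'\|_{L^1} \lec \sqrt\Ga\,e^{-3z/2}$ via~\eqref{intersize} and~\eqref{esefest}. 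Cross-modulation terms (the pieces of $\cE_{P^0}^{\Ga=0}$ centered at $-\frac{z}{2}$) paired with $T_z-Q'$ give $O(|\vec m|ze^{-z})$ by standard overlap estimates. All of these are within the admissible error.

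The third term, localized to $|y|\leq 2$, produces the new leading contribution $-2\Ga c_p e^{-z/2}T_z(-\frac{z}{2})$, and I split it as $\lan R_{cut}+(\chi-1)\cE_{P^0}^{\Ga=0},\tilde Q'_+\ran + \lan R_{cut}+(\chi-1)\cE_{P^0}^{\Ga=0},\tilde T_+-\tilde Q'_+\ran$. Near the origin the relevant asymptotics (from~\eqref{asy}) are $P^0(y)\approx c_p e^{-z/2}(e^{i\frac{v}{2}(y-\frac{z}{2})}e^y + e^{-i\frac{v}{2}(y+\frac{z}{2})}e^{-y})$, $\p_y P^0(y)\approx c_p e^{-z/2}(e^{i\frac{v}{2}(y-\frac{z}{2})}e^y - e^{-i\frac{v}{2}(y+\frac{z}{2})}e^{-y})$, and $\tilde Q'_+(y)\approx c_p e^{-z/2}e^{i\frac{v}{2}(y-\frac{z}{2})}e^y$. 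The $\Ga$-independent first sub-piece must cancel (for consistency with the $\Ga=0$ case where the target reduces to~\eqref{Einnerest} applied to the cut-off problem); I expect the cancellation from the total-derivative identity $\frac{d}{dy}\bigl[(\p_y\chi)\cosh(y)e^y\bigr] = \bigl[(\p_y^2\chi)\cosh y + 2(\p_y\chi)\sinh y\bigr]e^y + (\p_y\chi)$ combined with $\int\p_y\chi\,dy = 0$, leaving only subleading $O(|\vec m|ze^{-z})$ and $O(e^{-(p+1)z/2})$ contributions. The $\Ga$-dependent second sub-piece, where $\tilde T_+-\tilde Q'_+$ near $y=0$ is controlled by the refined value $T_z(-\frac{z}{2})-Q'(-\frac{z}{2})$ from~\eqref{Ttight}, combined with the cut-off integrands of $P^0$ near origin and exploiting the jump condition $\tilde T'_+(0^+)-\tilde T'_+(0^-)=\Ga\tilde T_+(0)$, yields precisely $-2\Ga c_p e^{-z/2}T_z(-\frac{z}{2})$ at leading order. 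The main obstacle is the identification of this coefficient with the stated factor of $2$: this requires the precise value $P^0(0)\approx 2c_p e^{-z/2}e^{-ivz/4}$ (obtained from both soliton tails contributing symmetrically at the potential location) together with a careful cancellation of $O(e^{-z})$ pieces that would otherwise pollute the leading term.
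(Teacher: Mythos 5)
Your global decomposition is a viable reorganization of the paper's argument, and your estimates for the first two terms (the $G$ pairing via~\eqref{interform}, the modulation pairings with the Lemma~\eqref{inners2}-type replacements, the bound $\|G\|_\infty\|T_z-Q'\|_{L^1}\lec e^{-3z/2}$) align with what the paper does. The real divergence -- and the place your argument is incomplete -- is the cut-off term, which in the paper is the content of Lemma~\ref{deltacomp}. There the computation is clean: one integrates by parts twice, turning $\lan (2\p_y\chi\,\p_y + \p_y^2\chi)Q(\cdot-\tfrac{z}{2}), T_z(\cdot-\tfrac{z}{2})\ran$ into $\int(\chi-1)\bigl[Q\,T_z'' - Q''\,T_z\bigr]$, and then substitutes the two ODEs ($-Q''+Q-Q^p=0$ and $L_z^+T_z=\nu_z T_z$, including the delta); the Wronskian-type combination collapses to $-\nu_z QT_z - (p-1)Q^pT_z + \Ga\de_{-z/2}QT_z$, and the delta term alone produces $-\Ga Q(\mp\tfrac{z}{2})T_z(-\tfrac{z}{2})\approx -\Ga c_p e^{-z/2}T_z(-\tfrac{z}{2})$ from each of the two solitons, yielding the factor $2$; the $\nu_z QQ'$ and $Q^p$ pieces are $O(e^{-2z}+e^{-(p+1)z/2})$ and harmless. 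Your proposed alternative -- splitting $T_z=Q'+g$, killing the $Q'$ piece by an explicit leading-order $\cosh$ identity, and extracting the $\Ga$ piece from the jump condition -- can be made to close: the $Q'$ piece really does cancel because $QQ'''-Q''Q'=-(p-1)Q^pQ'=O(e^{-(p+1)z/2})$ near origin, and pairing with $g$ reproduces the $\Ga\de QT_z$ term because $g'$ carries the jump of $T_z'$. But as written the argument stops short of a proof. Two specific concerns: (i) you yourself flag that the factor of $2$ is not pinned down, and an explicit-asymptotics bookkeeping is precisely the kind of place where a factor goes missing; (ii) the phrase ``controlled by the refined value $T_z(-\tfrac{z}{2})-Q'(-\tfrac{z}{2})$ from~\eqref{Ttight}'' misidentifies the mechanism -- the leading contribution is not driven by the pointwise size of $T_z-Q'$ (which is the subleading $O(\Ga e^{-z/2})$ estimate~\eqref{Ttight} used elsewhere, to evaluate the force $\Ht$) but by the jump in $T_z'$, i.e.\ by the $\Ga\de_{-z/2}$ in the eigenvalue equation. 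You would do well to replace the asymptotic/$\cosh$ computation by the direct integration-by-parts-plus-ODE argument of Lemma~\ref{deltacomp}, which settles the coefficient with no asymptotic expansion at all.
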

Though this estimate captures the essential effect of the delta potential on the soliton
dynamics -- mediated through the modified
translational eigenfunction $T_z$ -- its proof
is somewhat involved, so to preserve the flow
of the main argument, we postpone it 
to Section~\ref{motionproof}.

Based on the estimate Proposition~\ref{Einnerest2}, we replace  
the motion law~\eqref{law0} for $\Ga=0$ by the modified one
\begin{equation} \label{vdyn2}
  \dot v(s) = -\Ht(z(s)), 
  \qquad \Ht(z) = \frac{2}{M(Q)} \left(  H(z) - 2 \Ga c_p e^{-\frac{z}{2}} T_{z}\left(-\frac{z}{2}\right) \right).
\end{equation}
Using~\eqref{interform2} and the bounds~\eqref{Ttight}, we see that
\begin{equation} \label{expapprox}
  |\Ht(z) - f(z) e^{-z}| \lec 
  e^{-\min \left(\frac{3}{2},\frac{2(p+1)}{p+3} \right)z}, 
\end{equation}
where
\begin{equation} \label{fbounds}
  2 - (1 + 2\Ga)^{\frac{1}{2}} \leq \frac{f(z)}{\si^2} \leq \frac{2 - \Ga}{2 + \Ga}, \qquad
  \si = \frac{2 c_p}{\sqrt{M(Q)}}.
\end{equation}
In particular, we have:
\begin{equation} \label{sign}
\begin{split}
  &\Ga < \frac{3}{2} \; \implies \; \exists \; 
  f_* > 0 \; \mbox{ s.t. } \; f(z) \geq f_* \\
  &\Ga > 2 \; \implies \;  \exists \; 
  f^* > 0 \; \mbox{ s.t. } \; f(z) \leq -f^*.
\end{split}
\end{equation}
The Newtonian dynamics governing the soliton motion via~\eqref{vdyn2} are given by
\[
  \dot z(s) = 2v(s), \qquad  \dot v(s) = -\Ht(z(s)),
\]
with conserved energy
\[
  E(z,v) = v^2 - F(z), \qquad
  F(z) := \int_z^\infty \Ht(z') d z'.
\]
By~\eqref{expapprox} and~\eqref{fbounds}, we have
\begin{equation} \label{Fbounds}
  2 - (1 + 2\Ga)^{\frac{1}{2}} - O(e^{-\min\left(\frac{1}{2},\frac{p-1}{p+3}\right)z})
  \leq \frac{F(z)}{\si^2 e^{-z}} \leq
  \frac{2 - \Ga}{2 + \Ga} +  O(e^{-\min\left(\frac{1}{2},\frac{p-1}{p+3}\right)z}).
\end{equation}
Assuming $\Ga < \frac{3}{2}$, so $F(z) > 0$, our desired trajectory has $E=0$, that is
\begin{equation} \label{vclass}
  v = \sqrt{F(z)},
\end{equation}
and
\[
  \frac{dz}{ds} = 2 v = 2 \sqrt{F(z)},
\]
so defining
\[
  \z(z) := \int_{z_0}^z \frac{d z'}{2 \sqrt{F(z')}},
\]
($z_0$ large enough that Proposition~\ref{efprop} applies for $z \geq z_0$),
the desired classical trajectory is given by
\begin{equation} \label{zclass}
  \z(z(s)) - s = c 
\end{equation}
for a constant $c$. By~\eqref{Fbounds}, we have,
if $\Ga < \frac{3}{2}$,
\begin{equation} \label{zetabounds}
  \sqrt{\frac{2+\Ga}{2-\Ga}} - 
  O(z e^{-\min\left(\frac{1}{2},\frac{p-1}{p+3} \right) z})
  \leq \frac{\si \z(z)}{e^{\frac{z}{2}}} \leq \frac{1}{\sqrt{2 - \sqrt{1+2\Ga}}} 
  + O(z e^{-\min\left(\frac{1}{2},\frac{p-1}{p+3} \right) z})
\end{equation}
so in particular
\begin{equation} \label{zetabounds2}
    |z - 2 \log \z(z) | \lec 1.
\end{equation}

\subsection{Parameter modulation} \label{parmod}

For solutions which are close to the approximate solution family, we would like to modulate the phase, scale and position parameters, writing
\begin{equation} \label{xi}
  e^{-i \gat(s)} \la(s)^{\frac{2}{p-1}} u(t(s), \la(s) y)  = P(y;z(s),v(s)) + \xi(s,y),
\end{equation}
so that the remainder $\xi$, re-expressed in the frame
of the right-hand soliton,
\begin{equation} \label{eta}
  \eta(s,y) = e^{-i \frac{v(s)}{2} y} \xi \left( s, y + \frac{z(s)}{2} \right),
\end{equation}
satisfies some desired orthogonality conditions.
Proofs that this can be done locally in time,
in various contexts, are by now quite standard (going back at least to~\cite{Wein})
-- the one below is an analogue of \cite[Lemma 9]{Ngu19}.
Before stating it precisely, we note that under the
decomposition~\eqref{xi}-\eqref{eta},
the equation~\eqref{NLS} becomes the following
equation for $\eta$,
\begin{equation}  \label{etaeq}
\begin{split}
  0 &= i \p_s \eta + (\p_y^2 - \Ga \delta_{-\frac{z}{2}} -1 ) \eta
  + |P_1 + \eta|^{p-1}(P_1 + \eta) - |P_1|^{p-1} P_1 
  \\ & \qquad \qquad +
  \mv \cdot \Mv \eta + \cE_{P_1} + (\la-1) \Ga \de_{-\frac{z}{2}} \eta,
\end{split}
\end{equation}
where
\[
  P_1(s,y) = e^{-i \frac{v}{2} y} P(s,y+\frac{z}{2}), \qquad
  \cE_{P_1}(s,y) = e^{-i \frac{v}{2} y} \cE_P(s,y+\frac{z}{2}),
\]
and that
\begin{equation} \label{domains}
  u(t,\cdot) \in \D^\Ga \;\; \implies \;\; \xi(s,\cdot) \in \D^{\la(s) \Ga}, \quad
  \eta(s,\cdot) \in \D^{\la(s) \Ga}_{-\frac{z}{2}}.
\end{equation}
\begin{lemma} \label{modulation}
There is $\epsilon_0 > 0$ such that given $s_{\mathrm f} \in \R$, and
\begin{equation} \label{endparam}
  \la_{\mathrm f},  \gat_{\mathrm f}, z_{\mathrm f}, v_{\mathrm f}
  \in (0,\infty) \times \R \times (0,\infty) \times \R, \;\; \mbox{ with } \;\;
  |\la_{\mathrm f}-1| + \frac{1}{z_{\mathrm f}} + |v_{\mathrm f}| z_{\mathrm f} < \epsilon_0, 
\end{equation}
and an even solution 
\begin{equation} \label{givensol}
  u \in C(J;\D^\Ga) \cap C^1(J;L^2(\R))
\end{equation}
of~\eqref{NLS} on an open interval $J \ni s_{\mathrm f}$ with
\begin{equation} \label{endclose}
  \| e^{-i \gat_{\mathrm f}} (\la_{\mathrm f})^{\frac{2}{p-1}} u(s_{\mathrm f}, \la_{\mathrm f} \cdot)  - P( \cdot ; z_{\mathrm f}, v_{\mathrm f}) \|_{H^1} < \epsilon_0,
\end{equation}
there exist, on an open interval $I \ni s_{\mathrm f}$, unique $C^1$ functions
\begin{equation} \label{paramexist}
  \vec{q}(s) = \left( \la(s), \gat(s), z(s) , v(s)  \right) \in
  (0,\infty) \times \R \times (0, \infty) \times \R
\end{equation}
with
\[
  \vec{q}(s_{\mathrm f}) =  \left( \la_{\mathrm{f}}, \gat_{\mathrm f}, z_{\mathrm f}, v_{\mathrm f} \right),
\]
such that setting
\begin{equation} \label{ts}
  t(s) = s_{\mathrm f} - \int_s^{s_{\mathrm f}} \la^2(s) ds \;\; \in J,
\end{equation}
the motion law~\eqref{vdyn2}, and the orthogonality relations
\begin{equation} \label{difforthos}
  \frac{d}{ds} \lan \eta(s, \cdot), Q \ran = 
  \frac{d}{ds} \lan \eta(s, \cdot), yQ \ran = 
  \frac{d}{ds} \lan \eta(s, \cdot), i \La Q \ran = 0
\end{equation}
hold for all $s \in I$. 
In particular, if the orthogonality conditions
\begin{equation} \label{orthos}
  \lan \eta(s, \cdot), Q \ran = \lan \eta(s, \cdot), yQ \ran = \lan \eta(s, \cdot), i \La Q \ran = 0
\end{equation}
hold at $s=t=s_{\mathrm f}$, then they hold for $s \in I$.
\end{lemma}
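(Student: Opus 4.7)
The plan is to recast the modulation as a $4$-dimensional ODE system for $\vec{q}(s) = (\la(s), \gat(s), z(s), v(s))$ with prescribed initial data $\vec{q}(s_{\mathrm f}) = (\la_{\mathrm f}, \gat_{\mathrm f}, z_{\mathrm f}, v_{\mathrm f})$. The component $v$ evolves directly by the motion law~\eqref{vdyn2}, $\dot v = -\Ht(z)$, while $\la$, $\gat$, $z$ are fixed by the three differentiated orthogonality conditions~\eqref{difforthos} with $\phi \in \{Q, yQ, i\La Q\}$. Once this system is shown to be solvable for $\dot{\vec q}$ and locally Lipschitz near $(s_{\mathrm f}, \vec{q}_{\mathrm f})$, Cauchy-Lipschitz produces a unique $C^1$ trajectory on some open $I \ni s_{\mathrm f}$, and the orthogonality-preservation statement of the lemma is then automatic.

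To write the modulation equations down, note that $\eta(s, \cdot; \vec q)$ depends on $u$ only through $u(t(s), \cdot) \in \D^\Ga$, which by~\eqref{givensol} is a $C^1$ map into $L^2$. Formally differentiating $\lan \eta, \phi \ran$ in $s$ produces (i) a term linear in $\dot{\vec q}$ coming from the $\vec q$-dependence in~\eqref{xi}-\eqref{eta}, with coefficients smooth in $\vec q$ and $u(t, \cdot)$; and (ii) a term of the form $\la^2 \lan (\text{transformed})\p_t u, \phi \ran$ from $u$ varying through $t$. The only potential regularity issue is the second spatial derivative inside $\p_t u = i(\p_x^2 - \Ga \de)u - i|u|^{p-1}u \in L^2$; I handle it by transferring $\p_x^2$ onto the smooth, rapidly decaying test function via the quadratic-form identity~\eqref{IBP}. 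Substituting the now-known $\dot v = -\Ht(z)$, the three conditions become a $3 \times 3$ linear system
\[
  A(s, \vec q) \begin{pmatrix} \dot \la \\ \dot \gat \\ \dot z \end{pmatrix} \; = \; b(s, \vec q),
\]
with $A$ and $b$ continuous in $s$ (through $u(t(s), \cdot)$) and Lipschitz in $\vec q$.

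The crux is invertibility of $A$ near $\vec q_{\mathrm f}$. Evaluating at $\eta = 0$, $v_{\mathrm f} = 0$ and using~\eqref{Psum} to replace $P$ by $Q(\cdot - \tfrac{z}{2}) + Q(\cdot + \tfrac{z}{2})$, direct parameter differentiation in the right-soliton frame gives
\[
  \p_\la \eta|_0 \approx \La Q + \tfrac{z}{2} Q', \qquad
  \p_\gat \eta|_0 \approx -iQ, \qquad
  \p_z \eta|_0 \approx \tfrac{1}{2}Q',
\]
modulo contributions from the opposite soliton that are $O(e^{-z_{\mathrm f}/2})$ after pairing with $\phi$. Parity of $Q$, $Q'$, $\La Q$ kills most of the induced inner products, and using $\lan \La Q, Q \ran = \tfrac{5-p}{2(p-1)}\|Q\|_2^2$ and $\lan Q', yQ \ran = -\tfrac{1}{2}\|Q\|_2^2$ a direct computation yields
\[
  \det A_0 \;=\; -\tfrac{(5-p)^2}{16\,(p-1)^2}\,\|Q\|_2^6 \;\neq\; 0
\]
by~\eqref{pass}. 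Every correction to $A_0$ -- from the opposite soliton, the cut-off $\chi$ (supported in $\{|y| \leq 2\}$), the velocity $v_{\mathrm f}$, and the remainder $\xi$ -- is $O(\epsilon_0 + e^{-z_{\mathrm f}/2})$ under~\eqref{endparam}-\eqref{endclose}, so $A$ remains invertible on a neighborhood of $(s_{\mathrm f}, \vec q_{\mathrm f})$ once $\epsilon_0$ is small.

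With $A^{-1}$ the system reads $\dot{\vec q} = F(s, \vec q)$ with $F$ continuous in $s$ and locally Lipschitz in $\vec q$, so Cauchy-Lipschitz produces a unique $C^1$ solution on some open $I \ni s_{\mathrm f}$; shrinking $I$ keeps $\vec q(s)$ in the invertibility neighborhood and $t(s) \in J$. By construction $\frac{d}{ds}\lan \eta, \phi \ran \equiv 0$ for each $\phi \in \{Q, yQ, i\La Q\}$, so these quantities are constant and vanish throughout $I$ if they vanish at $s_{\mathrm f}$. The main obstacle, which is precisely what Remark~\ref{modproofrem} warns of, is the regularity gap between $u \in C(J; \D^\Ga) \cap C^1(J; L^2)$ and the $C^1(J; H^1)$ regularity that the implicit-function-theorem argument of~\cite{Ngu19} requires; the ODE formulation above sidesteps this by pairing $\p_t u \in L^2$ only against smooth decaying test functions, with second spatial derivatives absorbed onto the test side via~\eqref{IBP}.
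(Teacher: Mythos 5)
Your overall framing — differentiate the orthogonality conditions, add the motion law, solve the resulting ODE system for $\vec q$ after inverting a modulation matrix — matches the paper's proof, and your computation of the leading tangent vectors and the non-degeneracy of the $3\times 3$ block is essentially correct (modulo a sign in the determinant, which is harmless). However, there is a genuine gap in the step where you invoke Cauchy-Lipschitz.

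You write the system as $\dot{\vec q} = F(s, \vec q)$ "with $F$ continuous in $s$... and Lipschitz in $\vec q$." But $F$ is not a function of $(s, \vec q)$: the coefficients depend on $u(t(s),\cdot)$, and $t(s) = s_{\mathrm f} - \int_s^{s_{\mathrm f}} \la^2$ is itself determined by the unknown trajectory $\la(\cdot)$. To formulate a legitimate ODE you must adjoin $t$ as a state variable, $\dot t = \la^2$, yielding an autonomous system in $(t, \vec q)$. At that point Picard-Lindel\"of requires the vector field to be locally Lipschitz in \emph{all} state variables including $t$ — and this is precisely what fails. The obstruction is not the $\p_x^2 u$ term you identify (which, as you note, is harmless since $\p_t u \in C(J;L^2)$ and can be paired against smooth test functions); it is the pointwise value $u(t,0)$, which enters through the boundary term $\Ga \bar\phi(-z/2)\eta(-z/2)$ produced by the quadratic-form integration by parts~\eqref{IBP}, and equals $e^{i\gat} \la^{-2/(p-1)}$ times a phase, times $\eta(s,-z/2)$. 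Since we only have $u \in C(J;H^1)$, not $C^1(J;H^1)$, the scalar $u(\cdot,0)$ is merely continuous, not Lipschitz, in $t$. Your IBP does not remove this term; it produces it.

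The fix — and what the paper actually does, per Remark~\ref{modproofrem} — is to reparametrize: treat $t$ as the \emph{independent} variable, solve the non-autonomous system $\frac{d\hat{\vec p}}{dt} = \la^{-2}\,\Upsilon^{-1}\vec f(\hat{\vec p},t)$ where mere continuity of the right-hand side in $t$ suffices for Picard-Lindel\"of, and only afterwards invert $s \mapsto t(s)$ to recover the $s$-parametrization. Your proposal as written invokes Cauchy-Lipschitz without this reparametrization and so does not establish existence or uniqueness. A secondary omission: the regularity of $\vec f$ in $\vec q$ also requires that $z \mapsto T_z(-z/2)$ (hence $\Ht(z)$) be $C^1$, which is nontrivial and is justified in the paper by boundedness of $\p_z^2 \nu_z$ and $\p_z^2 \tilde g$.
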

Unfortunately, the proof of~\cite[Lemma 9]{Ngu19} does
not directly carry over, as the delta potential 
creates a technical complication. So we provide a 
proof in Section~\ref{modproof}.
\begin{remark}
\label{fullmod}
For the non-existence argument in Section~\ref{non}, we will also need a variant of this lemma,
where we replace the motion law~\eqref{vdyn2} 
(allowing the velocity $v(s)$ to modulate)
with a fourth orthogonality relation
\[
  \frac{d}{ds} \lan \eta(s,\cdot), i T_{z(s)} \ran = 0.
\]
The proof is essentially the same.
\end{remark}

\section{Construction of the logarithmically separating two-soliton}

In this section we will give the proof of Theorem~\ref{existence},
modulo the uniform estimates on approximate solutions which 
are shown in the next section.
As in~\cite{Ngu19}, we treat the subcritical ($p < 5$) and
supercritical ($p > 5$) cases separately.

\subsection{The construction for subcritical nonlinearities}
\label{subcrit}

Here we give the proof of Theorem~\ref{existence} for 
the subcritical powers $p < 5$.

{\it Proof of Theorem~\ref{existence} for $p < 5$}:
let $s_{\mathrm f} \geq 1$ be sufficiently large. 
Let $u$ be the solution of~\eqref{NLS} with
\[
  u(s_{\mathrm f},x) = P(x; z_{\mathrm f}, v_{\mathrm f}), \qquad
  z_{\mathrm f} \geq \frac{1}{\epsilon_0}, \quad
  v_{\mathrm f} = \sqrt{F(z_{\mathrm f})},
\]
where $z_{\mathrm f}$ will be precisely chosen later, and then the choice of
$v_{\mathrm f}$ is determined by~\eqref{vclass}.
Then~\eqref{endparam} and~\eqref{endclose} hold with
\[
  \la_{\mathrm f} = 1, \qquad \gat_{\mathrm f} = 0,
\]
and by~\eqref{approxdomain},~\eqref{givensol} holds, so we may invoke Proposition~\ref{modulation} to provide 
an interval $I \ni s_{\mathrm f}$, and parameters as in~\eqref{paramexist}
satisfying
\begin{equation} \label{ICs}
  \left( \la(s_{\mathrm f}), \gat(s_{\mathrm f}), z(s_{\mathrm f}), v(s_{\mathrm f}) \right) = 
  \left( 1, 0, z_{\mathrm f}, v_{\mathrm f} \right)
\end{equation}
such that the decomposition~\eqref{xi}-\eqref{eta} holds, with
\begin{equation} \label{ICxi}
  \xi(s_{\mathrm f},\cdot) = \eta(s_{\mathrm f},\cdot) = 0,
\end{equation}
as do the orthogonality conditions~\eqref{orthos} 
and the motion law~\eqref{vdyn2}.

The key to the construction is to prove estimates of this 
solution on a time interval $[s_0, s_{\mathrm f}]$,
with $s_0$ fixed, which are uniform in $s_{\mathrm f}$ as $s_{\mathrm f} \to \infty$
(this is the analogue of~\cite[Prop 10]{Ngu19}):
\begin{proposition} \label{uniform}
There is $s_0 > 0$ such that for all $s_{\mathrm f} > s_0$ there is
$z_{\mathrm f} > 0$ such that for all $s \in [s_0,s_{\mathrm f}]$,
the parameters satisfy
\begin{equation} \label{paramests}
  |z(s) - 2 \log s| \lec 1, \quad
  |\z(z) - s| \lec \frac{s}{\log^{\frac{1}{2}} s}, \quad
  |v(s)| + |\la(s) - 1| \lec \frac{1}{s},
\end{equation}
and the remainder satisfies
\begin{equation} \label{stayclose}
  \| \xi(s,\cdot) \|_{H^1} \lec \frac{1}{s}.
\end{equation}
\end{proposition}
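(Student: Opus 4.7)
The plan is a bootstrap/shooting argument. Fix $s_0$ large and work on a sub-interval $[s_*, s_{\mathrm f}] \subset [s_0, s_{\mathrm f}]$ on which the bounds~\eqref{paramests}-\eqref{stayclose} are assumed to hold with constants enlarged (say, doubled) compared to the ones we ultimately want; the final data~\eqref{ICs}-\eqref{ICxi} guarantee the bootstrap is valid in a left neighbourhood of $s_{\mathrm f}$. On this interval the goal is to prove strictly better estimates, so that continuity forces $s_* = s_0$. The remaining degree of freedom $z_{\mathrm f}$ (and hence $v_{\mathrm f}$, via~\eqref{vclass}) is left free during the bootstrap and fixed at the end by a topological/shooting step.

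Three estimates drive the argument. First, differentiating the orthogonality relations~\eqref{orthos} in $s$ and substituting the $\eta$-equation~\eqref{etaeq} produces a nearly diagonal linear system for the first three components of $\vec m$; combined with the motion law~\eqref{vdyn2} and the bounds $|v|\lec 1/s$, $e^{-z}\lec 1/s^2$ from the bootstrap, this yields $|\vec m(s)| \lec \|\eta(s)\|_{H^1}^2 + e^{-z(s)} + \mbox{(cut-off and $(\la-1)\Ga\de$ terms)}$, hence $|\vec m(s)| \lec s^{-2}$. Second, the projection $\langle \eta, iT_{z}\rangle$ is not fixed by modulation and must be controlled via a localized momentum functional $\J(s)$: computing $\dot\J$ from~\eqref{etaeq}, the dominant contribution is exactly the inner product appearing on the left of Proposition~\ref{Einnerest2}, and the modified motion law~\eqref{vdyn2} is precisely chosen so that this contribution cancels. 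What remains is integrable in $s$ within the bootstrap, and since $\J(s_{\mathrm f})=0$, backward integration yields $|\langle \eta(s), iT_{z(s)}\rangle| \lec s^{-1}$. Third, a coercive energy functional $\cE(s)$ for $\eta$, modelled on~\cite{Ngu19} but modified to accommodate the cut-off $\chi$ and the delta potential, satisfies $\|\eta\|_{H^1}^2 \lec \cE + s^{-2}$ once the three orthogonalities~\eqref{orthos} together with the $\langle \eta, iT_z\rangle$ bound from the previous step kill the generalized kernel of the linearized operator. Its time-derivative is of order $|\vec m|\,\|\eta\|_{H^1}^2 + \|\eta\|_{H^1}(\|\eta\|_{H^1}^2+e^{-z}) + \mbox{(cut-off)}$, and backward integration from $\cE(s_{\mathrm f})=0$ gives $\|\eta(s)\|_{H^1} \lec s^{-1}$ with a constant strictly better than the bootstrap constant.

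With $|\vec m| \lec s^{-2}$, the definitions of $m_1$-$m_3$ yield the $\la$, $\gat$, $v$ parts of~\eqref{paramests} by direct integration, and $\dot z = 2v + O(s^{-2})$ combined with $\dot v = -\Ht(z)$ reproduces the Newtonian dynamics to leading order; the approximate conservation of the shooting energy $E(z,v) = v^2 - F(z)$ then gives $\z(z(s))-s = c(s) + O(\mbox{small})$, from which~\eqref{zetabounds2} yields the logarithmic separation. The remaining freedom $z_{\mathrm f}$ is used to shoot: $v_{\mathrm f} = \sqrt{F(z_{\mathrm f})}$ forces $E(z(s_{\mathrm f}),v(s_{\mathrm f})) = 0$, and one estimates how $E(z(s_0),v(s_0))$ depends on $z_{\mathrm f}$. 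A one-dimensional intermediate value argument then selects $z_{\mathrm f}$ so that $|z(s) - 2 \log s| \lec 1$ and $|\z(z)-s|\lec s/\log^{1/2}s$ hold on the whole interval, closing the bootstrap. The main obstacle is the coupling between the localized momentum and energy estimates: the modified force law~\eqref{vdyn2} (and hence the precise control of $T_z,\nu_z$ from Proposition~\ref{efprop}) is exactly what produces the leading-order cancellation in $\dot \J$, while the cut-off in~\eqref{P3} is what keeps the energy functional well-defined in the presence of the delta potential; the non-trivial interplay between these two ingredients, together with the new logarithmic losses noted in the strategy, is where most of the work lies.
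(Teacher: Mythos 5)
Your overall scaffolding (double bootstrap, modulation estimates, localized momentum, coercive energy, shooting in $z_{\mathrm f}$) matches the paper's, and you correctly identify that the modified force law~\eqref{vdyn2} and the perturbed eigenfunction $T_z$ are what make the momentum step work. However, there is a quantitative error in the modulation step that propagates into a genuine gap in the topological argument.

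You claim that differentiating the three orthogonality relations~\eqref{orthos}, together with the motion law, yields $|\vec m(s)|\lesssim \|\eta\|_{H^1}^2 + e^{-z} + \cdots \lesssim s^{-2}$. This is false for the component $m_2 \sim \dot z - 2v$: differentiating $\lan\eta, yQ\ran = 0$ produces (via $L^- yQ = -2Q'$) the term $\lan\eta, iQ'\ran$, which is \emph{not} killed by any orthogonality condition and is a linear, not quadratic, functional of $\eta$. It can only be controlled by the localized momentum argument, and then only at the rate $1/(s\log s)$ (the paper's \eqref{modinner}), not $1/s^2$. Your two claims $|\vec m|\lesssim s^{-2}$ and $|\lan\eta, iT_z\ran|\lesssim s^{-1}$ are in fact mutually inconsistent, since $m_2$ is bounded precisely by $|\lan\eta, iT_z\ran|$ plus smaller terms (cf.~\eqref{m2}). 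The paper handles this circularity by a \emph{nested} bootstrap: a second bootstrap hypothesis~\eqref{boot2} on $\lan\eta, iT_z\ran$ is assumed while deriving the modulation estimates, and then closed by the momentum argument.

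Moreover, the bound $|\lan\eta, iT_z\ran|\lesssim 1/s$ you extract from $\dot\J$ being "integrable" is too weak to close the shooting step. The topological argument requires $\left|\frac{d}{ds}\z(z)-1\right|\ll 1$, and since $\frac{d}{ds}\z(z)-1 = \frac{\dot z - 2\sqrt{F(z)}}{2\sqrt{F(z)}}$ with $\sqrt{F(z)}\sim e^{-z/2}\sim 1/s$, this needs $|\dot z - 2v| = o(1/s)$. With only $|\dot z - 2v|\lesssim 1/s$, the quantity $\frac{d}{ds}\z(z)-1$ is $O(1)$ and backward integration over $[s_0,s_{\mathrm f}]$ loses $O(s_{\mathrm f})$, destroying the bootstrap bound $|\z(z)-s|\leq s/\log^{1/2}s$. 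The paper obtains the crucial improvement $|\lan\eta, iT_z\ran|\lesssim 1/(s\log s)$ (hence $|\dot z - 2v|\lesssim 1/(s\log^{3/4}s)$, hence $|\frac{d}{ds}\z(z)-1|\lesssim 1/\log^{3/4}s$) precisely because the localized momentum $\M$ in~\eqref{locmom} uses the \emph{scale-$\log s$} cutoff $\chit(|y|/\log s)$, whose support is disjoint from both the delta potential and the solitons (see~\eqref{support}), so that the error in~\eqref{mom} carries an extra $1/\log s$. This choice of localization scale, and the logarithmic gain it produces, is not merely technical but is what makes the shooting argument converge, and it needs to be part of the proof.
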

\begin{remark}
While a priori the existence of parameters~\eqref{paramexist} and decomposition~\eqref{xi} is only guaranteed in a neighbourhood
of $s = s_{\mathrm f}$, the estimates~\eqref{paramests} and~\eqref{stayclose}
ensure, if $s_0$ is chosen large enough, that~\eqref{endparam}
and~\eqref{endclose} continue to hold (with $s_{\mathrm f}$ replaced by $s$),
so that the existence of parameters~\eqref{paramexist} and decomposition persists.
\end{remark}

Proposition~\ref{uniform} is the main ingredient in the construction
-- we prove it in the next section.
Given these uniform estimates, the proof of Theorem~\ref{existence} is 
completed via a compactness argument, which, since it proceeds essentially as in~\cite[Sec 4]{Ngu19}, we 
give only a rough sketch of.

First, using the relations~\eqref{scalephase} and the estimate for $\la(s)$
in~\eqref{paramests},
it is straightforward to convert Proposition~\ref{uniform}
into the following statement:
there exist $t_0 \geq 1$, a sequence $T_n \to \infty$, and a sequence of
even solutions $u_n \in C([t_0,T_n]; H^1)$ to~\eqref{NLS} satisfying
\[
  u_n(t,x) = e^{i \gat_n(t)} \la_n^{-\frac{2}{p-1}}(t) 
  \left[ P \left( \frac{x}{\la_n(t)} ; z_n(t),v_n(t) \right) 
  + \xi_n \left(t, \frac{x}{\la_n(t)} \right) \right],
\]
with parameters satisfying the uniform (in $n$) estimates
\begin{equation} \label{paramt}
  |z_n(t) - 2 \log t |\lec 1, \quad
  |\z(z_n(t)) - t| \lec \frac{t}{\log^{\frac{1}{2}} t}, \quad
  |v_n(t)| + |\la_n(t) - 1| \lec \frac{1}{t},
\end{equation}
and remainders satisfying the uniform estimate
\begin{equation} \label{remt}
  \| \xi_n(t,\cdot) \|_{H^1} \lec \frac{1}{t}.
\end{equation}

Next, the compactness assertion: there exists even $u_0 \in H^1$ such that up to subsequence,
\[ 
  u_n(t_0,\cdot) \to u_0 \; \mbox{ weakly in } H^1 \mbox{ and strongly in } L^2.
\]
The proof of this is exactly as in~\cite[Lemma 15]{Ngu19}: 
since~\eqref{remt} implies a uniform in $n$ (and also $t$) $H^1$ bound,
$\| u_n(t,\cdot) \|_{H^1} \leq C$, it suffices to show that the tail
of the mass density is uniformly small -- that is,
\[
  \forall \; \de_1 > 0, \; \exists \; n_0, K_1, s.t. \;
  n \geq n_0 \implies 
  \int_{|x| > K_1} |u(t_0,x)|^2 dx < \de_1.
\]
This follows from using~\eqref{paramt} and~\eqref{remt}
to conclude that the tail of $u(t_1,\cdot)$ can be made sufficiently small
at a later time $t_1$, then using a finite speed of propagation of mass-type
estimate (using the uniform $H^1$ bound) to pull the conclusion back to time $t_0$. 

Next, let $u$ be the solution of~\eqref{NLS} with $u(t_0,\cdot) = u_0$.
By interpolation, the above compactness assertion implies
\[ 
  u_n(t_0,\cdot) \to u_0 \; \mbox{ strongly in } H^\si, \quad \si \in [0,1),
\]
and so, fixing a $\si \in (\frac{1}{2},1)$, and invoking 
continuous dependence in $H^\si$ of solutions of~\eqref{NLS} on initial data
(see~\cite{GIS}), we have
\begin{equation} \label{Hsigmaconv}
  \| u_n(t,\cdot) - u(t,\cdot) \|_{H^\si} \to 0 \quad
  \mbox{ for each } t \in [t_0,\infty).
\end{equation}
It then follows, just as in~\cite{Ngu19} that
\[
  \gat_n(t) \to \gat(t), \quad \la_n(t) \to \la(t),
  \quad z_n(t) \to z(t), \quad v_n(t) \to v(t),
\]
and
\[
  u(t,x) = e^{i \gat(t)} \la^{-\frac{2}{p-1}}(t) 
  \left[ P \left( \frac{x}{\la(t)} ; z(t),v(t) \right) 
  + \xi \left(t, \frac{x}{\la(t)} \right) \right]
\]
with
\[
  \xi_n(t) \to \xi(t) \; \mbox{ weakly in } H^1 \mbox{ and strongly in } H^\si,
\]
and the estimates 
\[
  |z(t) - 2 \log t |\lec 1, \quad
  |\z(z(t)) - t| \lec \frac{t}{\log^{\frac{1}{2}} t}, \quad
  |v(t)| + |\la(t) - 1| \lec \frac{1}{t},
\]
inherited from~\eqref{paramt}, and
\[
  \| \xi(t,\cdot) \|_{H^1} \lec \frac{1}{t}
\]
inherited from~\eqref{remt}.

The proof of Theorem~\ref{existence} for the subcritical cases $p < 5$
is then finished by observing that
\[
\begin{split}
  \| e^{-i \gat(t)} u(t,\cdot) - P(\cdot; z(t),v(t)) \|_{H^1}
  &\lec \left\| P(\cdot; z(t),v(t)) - 
  \la^{-\frac{2}{p-1}}(t) 
  P \left( \frac{\cdot}{\la(t)}; z(t),v(t) \right) \right\|_{H^1} \\
  & \quad + \left\|  \la^{-\frac{2}{p-1}}(t) \xi \left(t, \frac{\cdot}{\la(t)} \right) \right\|_{H^1} \\
  & \lec |\la(t) - 1| + \| \xi(t,\cdot) \|_{H^1} \lec \frac{1}{t},
\end{split}
\]
and, by~\eqref{Psum},
\[
  \left\| P(\cdot; z(t),v(t)) -   \left[  Q(\cdot - \frac{z(t)}{2}) +  
  Q(\cdot + \frac{z(t)}{2})\right]  \right\|_{H^1}
  \lec |v(t)| + e^{-\frac{z(t)}{2}} \lec \frac{1}{t}, 
\]
and finally by replacing $u(t,\cdot)$ with $u(t_0 + t,\cdot)$
for $t \geq 0$.
$\Box$

\subsection{The construction for supercritical nonlinearities}
\label{supercrit}

Here we explain how to modify the above arguments
for $p < 5$ in order to prove Theorem~\ref{existence} for 
the supercritical powers $p > 5$.

{\it Proof of Theorem~\ref{existence} for $p > 5$}:
the key difference is that the linearized energy
(defined in~\eqref{en}) is no longer coercive under the orthogonality conditions~\eqref{orthos}
obtained by modulating the phase, centre and scale -- a fact very closely related to  
the instability of the individual soliton
for $p > 5$.
Additional control of the unstable direction is required. 
More precisely, there are additional (even) eigenfunctions
\begin{equation} \label{unstable}
  \left( L^+ \re + i L^- \im \right) Y^{\pm} 
  = \pm i e_1 Y^{\pm}, \quad e_1 > 0, \quad
  \bar{Y}^+ = Y^-, \quad |Y^{\pm}(y)| \lec e^{-\sqrt{1 + e_1^2}|y|},
\end{equation}
and an enlarged invariant (for the linearized dynamics around a single soliton with $\Ga = 0$) subspace
\[
  \{ Q, \; yQ, \; i Q', \; i \La Q, \; i Y^+, \; i Y^- \}^{\perp},
\]
on which the linearized energy is coercive by means of (see \cite[eq (1.17)]{Ngu19})
\[
\begin{split}
  p > 5 : \quad \| \eta \|_{H^1}^2 &\lec 
  (\re  \eta, L^+ \re \eta) + 
  (\im \eta, L^- \im \eta) \\
  & \quad + \lan y Q, \eta \ran^2 + \lan i \La Q, \eta \ran^2
  + \lan i Y^+, \eta \ran^2 + \lan i Y^-, \eta \ran^2.
\end{split}
\]

Analogous to~\cite[Sec 6]{Ngu19}, we define the following even (in $y$) functions
\[
\begin{split}
  & \Y^{\pm}(s,y) = \chi(y) \left[ \left( e^{i \frac{v}{2}(\cdot)} Y^\pm \right)
  (y - \frac{z}{2}) +  \left( e^{-i \frac{v}{2}(\cdot)} Y^\pm \right) (y + \frac{z}{2}) \right] \\
  & Z(s,y) = \chi(y) \left[ \left( e^{i \frac{v}{2}(\cdot)} i \La Q \right)
  (y - \frac{z}{2}) +  \left( e^{-i \frac{v}{2}(\cdot)} i \La Q \right) (y + \frac{z}{2}) \right] \\
   & V(s,y) =  \chi(y) \left[ \left( e^{i \frac{v}{2}(\cdot)} i Q' \right)
  (y - \frac{z}{2}) -  \left( e^{-i \frac{v}{2}(\cdot)} i Q' \right) (y + \frac{z}{2}) \right] \\
   & W(s,y) =  \chi(y) \left[ \left( e^{i \frac{v}{2}(\cdot)} (\cdot)Q \right)
  (y - \frac{z}{2}) -  \left( e^{-i \frac{v}{2}(\cdot)} (\cdot)Q \right) (y + \frac{z}{2}) \right],
\end{split}
\]
and take as final conditions, not $\xi(s_{\mathrm f},\cdot) = 0$ (as in~\eqref{ICxi}), but rather: with $z = z_{\mathrm f}$ and $v = v_{\mathrm f}$ given,
\[
  \xi(s_{\mathrm f},\cdot) = b^+ i\Y^+ + b^- i \Y^- + b_1 Z + b_2 V + b_3 W.
\]
\begin{remark} \label{superdomain}
Here the cut-off function $\chi$ removes the origin and so ensures
$\Y^{\pm}$, $Z$, $V$ and $W$, and therefore also $\xi(s_{\mathrm f},\cdot)$,
all lie in the domain $\D^\al$
(for any $\al \in \R$).
\end{remark}
The parameters $\bv = (b^+,b^-,b_1,b_2,b_3)$ are chosen so that
\[
  \bv = \bv(a_{\mathrm f}), \quad a_{\mathrm f} \in [(-s_{\mathrm f})^{-\frac{3}{2}}, (s_{\mathrm f})^{-\frac{3}{2}}], \quad |\bv| \lec |a_{\mathrm f}|,
\]
and the orthogonality conditions
\begin{equation} \label{finalorth}
\begin{split}
  & \qquad \qquad \qquad 
  \lan \eta(s_{\mathrm f},\cdot), i Y^- \ran = a_{\mathrm f}, \\
  &\lan \eta(s_{\mathrm f},\cdot), i Y^+ \ran = \lan \eta(s_{\mathrm f},\cdot), i \La Q \ran
  = \lan \eta(s_{\mathrm f},\cdot), yQ \ran = \lan \eta(s_{\mathrm f},\cdot), i T_z \ran = 0
\end{split}
\end{equation}
hold at $s = s_{\mathrm f}$.
That this can be done is shown just as in~\cite[Lemma 20]{Ngu19},
with the only modifications being the change 
$Q' \mapsto T_z$ in the last condition,
and the introduction of the cut-off
$\chi$ which produces only small error terms.

We let $u$ be the solution of~\eqref{NLS} with
\[
  u(s_{\mathrm f},\cdot) = P(\cdot; z_{\mathrm f},v_{\mathrm f}) + \xi(s_{\mathrm f},\cdot),
\]
with final parameter values
\[
  z_{\mathrm f} \geq \frac{1}{\epsilon_0}, \quad v_{\mathrm f} = \sqrt{F(z_{\mathrm f})}, \quad
  \la_{\mathrm f} = 1, \quad \gat_{\mathrm f} = 0.
\]
By Remark~\ref{superdomain} and~\eqref{approxdomain}, our solution
$u$ satisfies~\eqref{givensol}, and we may modulate the parameters as in Proposition~\ref{modulation} 
to produce the decomposition~\eqref{xi}.
The key uniform estimates are recorded in this
variant of Proposition~\ref{uniform}:
\begin{proposition} \label{uniform2}
There is $s_0 > 0$ such that for all $s_{\mathrm f} > s_0$ there are
$z_{\mathrm f} > 0$ and $a_{\mathrm f} \in [(-s_{\mathrm f})^{-\frac{3}{2}}, (s_{\mathrm f})^{-\frac{3}{2}}]$ 
such that for all $s \in [s_0,s_{\mathrm f}]$,
the parameters satisfy
\begin{equation}
  |z(s) - 2 \log s| \lec 1, \quad
  |\z(z) - s| \lec \frac{s}{\log^{\frac{1}{2}} s}, \quad
  |v(s)| + |\la(s) - 1| \lec \frac{1}{s},
\end{equation}
and the remainder satisfies
\begin{equation}
  \| \xi(s,\cdot) \|_{H^1} \lec \frac{1}{s}.
\end{equation}
\end{proposition}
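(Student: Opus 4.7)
The plan is to adapt the bootstrap argument of Proposition~\ref{uniform} by adding the ingredients from~\cite[Sec.~6]{Ngu19} needed to handle the unstable eigenfunctions $Y^{\pm}$ of the linearization around a single soliton. I would set up a bootstrap regime on $[s_0, s_{\mathrm f}]$ in which the asserted estimates hold with enlarged constants, together with two additional bounds on the projections $a^{\pm}(s) := \lan \eta(s,\cdot), iY^{\mp} \ran$ onto the unstable eigendirections, namely $|a^+(s)| \lec 1/s^{2}$ and $|a^-(s)| \lec 1/s^{3/2}$, matching the scale of the final-time conditions fixed in~\eqref{finalorth}.

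On this regime, the parameter estimates follow from the modulation equations derived from~\eqref{etaeq} by testing against $Q$, $yQ$, $i\La Q$, and $iT_z$, combined with Proposition~\ref{Einnerest2} and the motion law~\eqref{vdyn2}, exactly as in the subcritical case. The coercivity estimate for the energy/virial functional~\eqref{en} in the $p>5$ regime gives $\|\eta\|_{H^1}^2$ control modulo projections onto $\{Q, yQ, i\La Q, iY^{\pm}\}$; three of these are handled by the orthogonalities preserved by modulation, the $iT_z$ projection is controlled by the localized momentum argument of Section~\ref{momentum} (now with $T_z$ in place of $Q'$, via Proposition~\ref{Einnerest2}), and the two $iY^{\pm}$ projections are controlled by the bootstrap assumption. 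A standard energy estimate then closes the $H^1$ bound on $\eta$ with room to spare.

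Next, the two extra projections must be improved. One direction, say $a^+$, evolves by a linear ODE of the form $\dot a^+ = -e_1 a^+ + O(1/s^2)$ (modulo acceptable modulation remainders); since $a^+(s_{\mathrm f}) = 0$ by~\eqref{finalorth}, direct integration from $s_{\mathrm f}$ yields $|a^+(s)| \lec 1/s^2$, strictly improving the bootstrap. The other direction, $a^-$, satisfies $\dot a^- = e_1 a^- + O(1/s^2)$ with final value $a^-(s_{\mathrm f}) = a_{\mathrm f} \in [-s_{\mathrm f}^{-3/2}, s_{\mathrm f}^{-3/2}]$; the backward-unstable sign of $e_1$ prevents direct integration from closing the bootstrap. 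Following~\cite[Sec.~6]{Ngu19}, I would use a Brouwer-type topological shooting over the two-dimensional parameter space $(z_{\mathrm f}, a_{\mathrm f}) \in [z_0, z_0+1] \times [-s_{\mathrm f}^{-3/2}, s_{\mathrm f}^{-3/2}]$: if no valid pair existed for which the bootstrap closes all the way down to $s_0$, the first-exit times from the bootstrap regime would define a continuous retraction of the parameter rectangle onto its boundary, contradicting the no-retraction theorem.

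The main obstacle is verifying transversality at the exit times for this joint shooting. One must show that if $|a^-(s)| = s^{-3/2}$ at an exit time, then the unstable exponent $e_1$ dominates the forcing of size $O(1/s^2)$, so that $\frac{d}{ds}|a^-(s)|^2$ has the strictly outward sign; and similarly that exits in the $z$-direction have the strict monotonicity inherited from the subcritical topological argument of Section~\ref{topology}. Our modifications to~\cite{Ngu19} -- the cut-off $\chi$ appearing in $\Y^{\pm}, Z, V, W$, the replacement of $Q'$ by $T_z$ in the momentum functional, and the use of the modified force law~\eqref{vdyn2} -- contribute only controlled lower-order perturbations by virtue of Propositions~\ref{efprop} and~\ref{Einnerest2}, so the transversality is not affected and the shooting argument proceeds as in~\cite{Ngu19}.
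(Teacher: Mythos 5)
Your proposal follows the paper's approach in outline: bootstrap the subcritical estimates, add bounds on the two projections $\lan\eta, iY^{\pm}\ran$, close coercivity using the $p>5$ version of the spectral inequality, and choose $(z_{\mathrm f}, a_{\mathrm f})$ by a two-parameter topological shooting as in \cite[Lemma 23]{Ngu19}. That is the structure the paper uses. However, there are two issues worth flagging.

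First, your treatment of the ODE for the projections has the directions confused. With $a^+(s_{\mathrm f}) = 0$ and the equation $\dot a^+ = -e_1 a^+ + O(1/s^2)$, backward integration from $s_{\mathrm f}$ gives
\[
  a^+(s) = -\int_s^{s_{\mathrm f}} e^{\,e_1(\tau - s)}\,r(\tau)\,d\tau, \qquad |r(\tau)| \lec \tau^{-2},
\]
and the factor $e^{e_1(\tau - s)}$ is \emph{growing} on the integration interval; this is the backward-unstable mode and does not yield $|a^+(s)| \lec 1/s^2$. The backward-stable mode is the one with $\dot a = +e_1 a + O(1/s^2)$, for which the kernel $e^{-e_1(\tau-s)}$ is bounded and the direct integration argument applies. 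Moreover, with your definition $a^\pm := \lan\eta, iY^\mp\ran$, the final conditions~\eqref{finalorth} give $a^+(s_{\mathrm f}) = a_{\mathrm f}$ and $a^-(s_{\mathrm f}) = 0$, not the other way around as you state. These sign/labeling mismatches would need to be sorted out before the argument can be executed; the paper sidesteps the issue by placing \emph{both} projections at the $s^{-3/2}$ level in the bootstrap and closing both, simultaneously with the $z$-bound, via the shooting argument of \cite[Lemma 23]{Ngu19}, without attempting a strict $s^{-2}$ improvement on either.

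Second, you assert that the three orthogonalities on $Q$, $yQ$, $i\La Q$ are preserved exactly by the modulation. In the supercritical construction, however, the final conditions~\eqref{finalorth} fix five quantities that do not include $\lan\eta, Q\ran$; the modulation lemma only preserves $\frac{d}{ds}\lan\eta,Q\ran \equiv 0$, so $\lan\eta(s,\cdot), Q\ran$ is constant but equal to its (generically nonzero) value at $s_{\mathrm f}$, of size $\lec s_{\mathrm f}^{-3/2}$. This is what weakens~\eqref{modgam} to $|\dot\gat - 1| \lec s^{-3/2}$ in the paper's argument, and is the reason the paper relies on the weaker estimate~\eqref{mod0} rather than~\eqref{mod0a} in both the subcritical and supercritical cases. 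Relatedly, the boundary terms at $s = s_{\mathrm f}$ in the momentum and energy identities no longer vanish (since $\eta(s_{\mathrm f},\cdot) \neq 0$), and one must check that a contribution of size $s_{\mathrm f}^{-3/2}$ there is acceptable; your writeup assumes this implicitly but should spell it out.
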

The proof is outlined in Section~\ref{super}.

With these uniform estimates in hand, the 
proof of Theorem~\ref{existence} for $p > 5$
is completed exactly as above for $p < 5$.
$\Box$

\section{Uniform estimates} \label{uniformsec}

Here we obtain the uniform estimates on approximate
solutions which are the main ingredient in the construction.
We first consider Proposition~\ref{uniform} (for $p < 5$),
whose proof occupies the next several subsections.
Then in Section~\ref{super}, we explain how these 
arguments may be modified to prove Proposition~\ref{uniform2} (for $p > 5$).  

{\it Proof of Proposition~\ref{uniform}}:
consider the bootstrap estimates
\begin{equation} \label{boot}
  \left| \z(z) - s \right| \leq \frac{s}{\log^{\frac{1}{2}} s}, \qquad \| \xi \|_{H^1} \leq \frac{C^*}{s},
\end{equation}
with $C^* > 1$ to be chosen.
Note that the second of these obviously holds at $s = s_{\mathrm f}$,
by~\eqref{ICxi}.
If~\eqref{boot} hold, then as consequences we have
\begin{equation} \label{consequence}
  |e^{-\frac{z}{2}}| \lec \frac{1}{s}, \qquad 
  |z - 2 \log s| \lec 1, \qquad
  |\dot v| \lec \frac{1}{s^2},
  \qquad 
  |v| \lec \frac{1}{s}.
\end{equation}
The first two use~\eqref{zetabounds2}, the third
uses~\eqref{vdyn2} and~\eqref{fbounds}, and the fourth follows from
time integration of the third, the choice of $v_{\mathrm f}$ in~\eqref{ICs},
and~\eqref{Fbounds}.

We will work on the time interval
\[
  s \in [s^*,s_{\mathrm f}], \qquad
  s^* = s^*(s_{\mathrm f},z_{\mathrm f}) = \inf\{ \tau \in [s_0,s_{\mathrm f}] \; | \; \eqref{boot} 
  \mbox{ holds in } [\tau,s_{\mathrm f}] \}.
\]
Our goal is to show that we may choose $z_{\mathrm f} = z_{\mathrm f}(s_{\mathrm f})$
so that $s^* = s_0$ (independent of $s_{\mathrm f})$, and Proposition~\ref{uniform} will follow. 

\subsection{Modulation parameter estimates}
\label{modparest}

Here we estimate the time derivatives of the parameters, establishing this analogue of~\cite[Lemma 12]{Ngu19}:
\begin{lemma} \label{mods}
For $s \in [s^*,s_{\mathrm f}]$ (i.e. assuming~\eqref{boot}), we have the following:
\begin{equation} \label{modlam}
  \left| \frac{\dot \la}{\la} \right| \lec \frac{(C^*)^2}{s^2}
\end{equation}
\begin{equation} \label{modgam}
  | \dot \gat - 1 | \lec \frac{(C^*)\log s}{s^2}
\end{equation}
\begin{equation} \label{modz}
  |\dot z - 2 v| \lec \frac{1}{s \log^{\frac{3}{4}} s}
\end{equation}
\begin{equation} \label{modinner}
  |\lan \eta, \; i T_{z} \ran|  \lec \frac{(C^*)^2}{s \log s}.
\end{equation}
\end{lemma}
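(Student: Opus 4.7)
The bounds~\eqref{modlam}--\eqref{modz} arise from differentiating each orthogonality relation in~\eqref{orthos} in $s$ and substituting the $\eta$-equation~\eqref{etaeq}, while the bound~\eqref{modinner} on the unfixed inner product $\lan\eta, iT_z\ran$ is obtained by integrating its time derivative, whose leading term is, by design, cancelled by the motion law~\eqref{vdyn2}.

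For each $F\in\{Q, yQ, i\La Q\}$, the identity $0 = \frac{d}{ds}\lan\eta, F\ran = \lan\p_s\eta, F\ran$ combined with~\eqref{etaeq} extracts, from the principal part of $\cE_{P_1}$ in~\eqref{error}, the pairing $-\lan i\mv\cdot\Mv Q, F\ran$. Parity, together with
\[
\lan \La Q, Q\ran = \tfrac{5-p}{2(p-1)}\|Q\|_2^2, \qquad \lan Q', yQ\ran = -\tfrac12 \|Q\|_2^2,
\]
and the vanishing of pairings between opposite real/imaginary sectors, shows that this system is \emph{diagonal} in $(m_1, m_2, m_3)$ with $m_4$ (treated as known via the motion law) dropping out entirely; the hypothesis $p\ne 5$ in~\eqref{pass} ensures the diagonal is nonzero. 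Each $m_j$ ($j=1,2,3$) is then controlled by the size of the remaining error terms: the interaction contribution from $G$ of size $e^{-z}\ceq 1/s^2$ via~\eqref{intersize}; the cut-off and opposite-soliton tail contributions from~\eqref{error3}, exponentially suppressed by the localization of $F$; the linearization term $\lec\|\eta\|_{H^1}|\mv|\lec C^*|\mv|/s$; the quadratic Taylor remainder $\lec (C^*/s)^2$ from~\eqref{tay1}; and the potential-correction term $|\la-1|\,\|\eta\|_\infty$. A short self-consistency step then yields~\eqref{modlam}, \eqref{modgam}, and a bound on $m_2$. The estimate~\eqref{modz} follows from $\dot z - 2v = 2m_2 - \tfrac{\dot\la}{\la} z$, using $z\lec\log s$ and~\eqref{modlam} for the second piece, and feeding back~\eqref{modinner} to sharpen $m_2$.

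For~\eqref{modinner}, we differentiate and use~\eqref{efz}:
\[
\tfrac{d}{ds}\lan\eta, iT_z\ran = \dot z\lan\eta, i\p_z T_z\ran + \lan\p_s\eta, iT_z\ran,
\]
with the first term $\lec |\dot z|\,\|\eta\|_2\,\|\p_z T_z\|_2 \lec C^*/s^2$. For the second, substituting~\eqref{etaeq} and invoking Proposition~\ref{Einnerest2}, the leading quantity $\tfrac{M(Q)}{2}(\dot v - \tfrac{\dot\la}{\la}v) + H(z) - 2\Ga c_p e^{-z/2}T_z(-\tfrac z2)$ is cancelled exactly by the force law~\eqref{vdyn2}, up to the sub-leading correction $\tfrac{\dot\la}{\la}v \lec (C^*)^3/s^3$. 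The Proposition~\ref{Einnerest2} residual of size $\lec e^{-z}(|\mv|z^2 + v^2 z^2 + e^{-z/2})$, together with the quadratic-in-$\eta$ contribution $\lec (C^*/s)^2$ and a small $|\la-1|$-term, integrates over $[s,s_{\mathrm f}]$ against the terminal condition $\eta(s_{\mathrm f})=0$ to yield~\eqref{modinner}; the extra $1/\log s$ factor arises by substituting the already-proven sharper bounds on $|\mv|$ into the residual.

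The main difficulty is the mutual dependence between~\eqref{modz} and~\eqref{modinner}: the sharpened $m_2$-estimate uses~\eqref{modinner}, whereas the improvement in~\eqref{modinner} uses the sharpened bound on $|\mv|$. This is resolved by a two-step bootstrap---first establishing crude $\lec C^*/s$ estimates directly from~\eqref{stayclose} and the diagonal system, then substituting back to sharpen. Throughout, the precise $T_z$-estimates of Proposition~\ref{efprop} (in particular~\eqref{pointwise} and~\eqref{Ttight}) are essential to keep the potential-induced corrections within the error budgets for both the Gram system and the Proposition~\ref{Einnerest2} residual.
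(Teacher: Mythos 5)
Your outline of the modulation‐parameter estimates \eqref{modlam}–\eqref{modz} (differentiate \eqref{orthos}, substitute \eqref{etaeq}, use the inner-product relations \eqref{inners3} to extract a nearly diagonal system in $m_1,m_2,m_3$, and close via self-consistency) matches the paper's approach in spirit, and you are right that the two-step bootstrap between $m_2$ and $\lan\eta,iT_z\ran$ is needed. The genuine gap is in your treatment of \eqref{modinner}.

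The obstruction is the term $\lan \tfrac12 \bar\eta\cdot N''(P_1)\cdot\eta, \,Q'\ran$ (and the companion $(\dot z - v + \tfrac{\dot\la}{\la}z)\lan iQ',\p_y\eta\ran$ term) appearing in $\tfrac{d}{ds}\lan\eta,iT_z\ran$. This quadratic-in-$\eta$ contribution is $\ceq (C^*/s)^2$, with \emph{no} logarithmic gain available from any of the parameter bounds; integrating it over $[s,s_{\mathrm f}]$ produces $(C^*)^2/s$, which is weaker than the required $\tfrac{(C^*)^2}{s\log s}$. Your claim that "the extra $1/\log s$ factor arises by substituting the already-proven sharper bounds on $|\mv|$ into the residual" misattributes the source of the difficulty: the $|\mv|$-dependent residual from Proposition~\ref{Einnerest2} is already of size $\frac{\log^{5/4}s}{s^3}$ and is harmless; it is the $\eta$-quadratic term that limits direct integration. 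The paper removes this term by a cancellation with the localized momentum functional $\M(s)$ of \eqref{locmom}: the estimate \eqref{mom} shows that $\tfrac{d}{ds}\M(s)$ contains precisely twice this quadratic term (up to $\tfrac{(C^*)^2}{s^2\log s}$ error), so $\tfrac{d}{ds}\bigl[\tfrac12\M - \lan\eta,iT_z\ran\bigr] \lec \tfrac{(C^*)^2}{s^2\log s}$, which integrates to the desired bound; one then uses $|\M(s)| \lec \|\xi\|_{H^1}^2 \lec (C^*)^2/s^2$ to recover \eqref{modinner}. Without $\M$, your argument gives only $|\lan\eta,iT_z\ran|\lec (C^*)^2/s$, which is too weak: feeding this into \eqref{m2} yields only $|\dot z-2v|\lec (C^*)^2/s$, and the topological argument of Section~\ref{topology} then fails, since $|\tfrac{d}{ds}\zeta(z)-1|$ no longer tends to zero.

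A secondary point: the force-law cancellation via Proposition~\ref{Einnerest2} happens \emph{inside} the computation of $\lan\cE_{P_1},T_z\ran$ (the fourth term of \eqref{refine2}), not at the level of the full derivative; and to get the $L_z^\pm$ rather than $L^\pm$ to appear, one must use $\eta\in\D^{\la\Ga}_{-z/2}$ and \eqref{IBP} to move the second derivative onto $T_z$, keeping the $(\la-1)\Ga$ boundary term; this is also the step where replacing $Q'$ by $T_z$ pays off, since $L_z^+ T_z = \nu_z T_z$ makes the delta contribution harmless (Remark~\ref{replacement}). Your description of the delta-potential contributions as a generic $|\la-1|\|\eta\|_\infty$ correction glosses over this: the boundary term $\Ga\,\eta(-\tfrac z2)Q'(-\tfrac z2)$ that would appear if one kept $Q'$ instead of $T_z$ is of size $1/s^2$, comparable to the leading order, and would not be absorbable.
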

\begin{remark} \label{logloss}
Compared to \cite[Lemma 12]{Ngu19}, in addition to modifying the
inner product in~\eqref{modinner}, we lose an extra 
(harmless, it turns out) factor of $\log s$ in~\eqref{modgam} -- this comes from interaction between the delta potential (and cut-off) with the tail of $\La Q$.
\end{remark}

{\it Proof of Lemma~\ref{mods}}:
we start with a second bootstrap assumption,
\begin{equation} \label{boot2}
   |\lan \eta, \; i T_{\Ga,z} \ran|  \lec \frac{C^{**}}{s \log s},
\end{equation}
for some $C^{**}$ to be chosen. That is, we will work on the time interval
\[
  s \in [s^{**},s_{\mathrm f}], \qquad
  s^{**} = \inf\{ \tau \in [s^*,s_{\mathrm f}] \; | \; \eqref{boot2} 
  \mbox{ holds in } [\tau,s_{\mathrm f}] \}.
\]
Since~\eqref{boot2} trivially holds at $s=s_{\mathrm f}$ by~\eqref{ICxi},
and so if $s^{*} < s_{\mathrm f}$, then $s^{**} < s_{\mathrm f}$.

The proofs of~\eqref{modlam},~\eqref{modgam} and~\eqref{modz} are based on the equation~\eqref{etaeq} for $\eta$, which we use to show the following analogue of
\cite[(3.29)]{Ngu19}: for smooth, real-valued functions $A, B$ satisfying either 
\[
\begin{split}
  &(a) \quad |f(y)| + |f'(y)| \lec e^{-|y|} \quad \mbox{ or } \\
  &(b) \quad |f(y)| + |f'(y)| \lec (1 + |y|)e^{-|y|}
\end{split}
\]
where $f = A, B$, we have
\begin{equation} \label{modeq}
\begin{split}
  &\left| \frac{d}{ds} \lan \eta, A + i B \ran - \left[
  \lan \eta, i L_z^{-} A - L^{+}_z B \ran
  - \mv \cdot \lan \Mv Q, i A - B \ran \right] \right| \\
  & \qquad \lec \frac{|\mv|}{s} 
  + \frac{1}{s^2} \left( 
  \left\{ \begin{array}{cc} 
  (C^*)^2 & \mbox{ if } (a) \mbox{ holds } \\
  \log s  & \mbox{ if } (b) \mbox{ holds }
  \end{array} \right \} + C^*|\la-1|\log s \right),
\end{split}
\end{equation}
where $L^+_z$ is defined in~\eqref{L+z},
\[
  L^-_z = L^- + \Ga \de_{-\frac{z}{2}}
  = -\p_y^2 + 1 - Q^{p-1} + \Ga \de_{-\frac{z}{2}},
\]
and the inner-products involving $L_z^{\pm}$
are interpreted in quadratic form sense, e.g.
\[
  \lan \eta, L_z^+ B \ran = \lan \eta, L^+ B \ran
  + \Ga  \re \bar{\eta}\left(-\frac{z}{2}\right) B\left(-\frac{z}{2}\right)
\]
(since $A,B \not\in \D^\Ga_{-\frac{z}{2}}$ in general).
\begin{remark} \label{logloss2}
We will apply estimate~\eqref{modeq} for $A$ or $B$ equal to $Q$ (which satisfies (a)) as well as $yQ$ and $\La Q$ (which satisfy (b)).
We must distinguish between these, since otherwise the log loss in case (b) -- which arises from interaction of $A$ or $B$ with the delta potential
$\de_{-\frac{z}{2}}$ -- would destroy the estimate~\eqref{modlam}. 
\end{remark}
The proof of~\eqref{modeq} is essentially the same as in~\cite{Ngu19}, but with the following modifications:
\begin{itemize}
\item 
the contribution $\la \Ga \de_{-\frac{z}{2}} \eta$ to~\eqref{etaeq}
produces the delta potentials in the $L^{\pm}_z$ operators
appearing in~\eqref{modeq}.
We use the fact~\eqref{domains} $\eta(s,\cdot) \in \D^{\la\Ga}_{-\frac{z}{2}}$ to justify, via~\eqref{IBP}, moving the second derivative from $\eta$ onto $A$ or $B$ in the inner-product.
The contribution from the extra term 
$(\la-1) \Ga \de_{-\frac{z}{2}} \eta$ not included in
the leading order in~\eqref{modeq} is estimated as, e.g.,
\[
\begin{split}
  |\lan (\la-1) \Ga \de_{-\frac{z}{2}} \eta, B \ran| & 
  \lec |\la-1| |B(-z/2)||\eta(-z/2)| \lec |\la-1| z e^{-\frac{z}{2}}
  \frac{C^*}{s} \\ &\lec |\la-1| \frac{C^* \log s}{s^2};
\end{split}
\]
\item the cut-off in $P_1$ produces errors such as
\[
\begin{split}
  |\lan (|P_1|^{p-1} - |(P^0)_1|^{p-1}) \eta, B \ran| &\lec
  |\lan |(1 - \chi)(P^0)|^{p-1}(\cdot + \frac{z}{2}) \eta , B \ran | \\
  & \lec e^{-(p-1)\frac{z}{2}} z e^{-\frac{z}{2}} \| \eta \|_{H^1}
  \lec \frac{C^* \log s}{s^{p+1}};
\end{split}
\]
\item the cut-off in $\cE_P$ produces errors such as
\[
\begin{split}
  |\lan \left( (1-\chi) \cE_{P^0} \right)(\cdot + \frac{z}{2}), B \ran| &\lec \left(|\mv| e^{-\frac{z}{2}} + e^{-z} \right) z e^{-\frac{z}{2}}
  \lec \frac{|\mv| \log s}{s^2} + \frac{\log s}{s^3}
\end{split}
\]
and
\[
\begin{split}
  |\lan \left( \p_y^2 \chi P^0 \right)(\cdot + \frac{z}{2}), B \ran| &\lec e^{-\frac{z}{2}} \; 
  \left\{ \begin{array}{cc} 
  e^{-\frac{z}{2}} & \mbox{ if } (a) \\
  z e^{-\frac{z}{2}} & \mbox{ if } (b)
  \end{array} \right\} \\
  &  \lec \left\{ \begin{array}{cc} 
  \frac{1}{s^2} & \mbox{ if } (a) \\
  \frac{\log s}{s^2} & \mbox{ if } (b) 
  \end{array} \right\} . 
\end{split}
\]
\end{itemize}

We will now apply~\eqref{modeq} for several choices of $A$ and $B$,  using the orthogonality conditions~\eqref{orthos}, as well as the relations
\[
  L^- Q = 0, \quad L^+ \La Q = -2 Q,  \quad L^- yQ = -2Q'
\]
and 
\begin{equation} \label{inners3}
  \lan Q', Q \ran = \lan y Q, \La Q \ran = 0, \;\;
  \lan Q', y Q \ran = -M(Q), \;\;
  \lan \La Q, Q \ran = \frac{5-p}{(p-1)}M(Q) 
\end{equation}
(and $p \not= 5$):
\begin{itemize}
\item
applying~\eqref{modeq} with $A=Q$, $B=0$ 
(so $(a)$ holds) gives
\[
  \left|
  \lan \eta, i \Ga \de_{-\frac{z}{2}} Q \ran
  + \frac{5-p}{2(p-1)} \| Q \|_2^2 \frac{\dot \la}{\la} \right| 
  \lec  \frac{|\mv|}{s} + \frac{(C^*)^2}{s^2}
  \left(1 + |\la-1| \log s \right),
\]
and so using~\eqref{boot} and~\eqref{consequence},
\begin{equation} \label{m1}
\begin{split}
  \left| \frac{\dot \la}{\la} \right| &\lec
  e^{-\frac{z}{2}} \| \eta \|_{H^1} + 
  \frac{|\mv|}{s} + \frac{(C^*)^2}{s^2}
  \left(1 + |\la-1| \log s \right) \\
  &\lec  \frac{(C^*)^2}{s^2}
  \left(1 + |\la-1| \log s \right) + \frac{|\mv|}{s};
\end{split}
\end{equation}
\item
applying~\eqref{modeq} with $A=0$, $B = \La Q$ 
(so that (b) holds) gives
\[
\begin{split}
  &\left|
  \lan \eta, i \Ga \de_{-\frac{z}{2}} \La Q \ran
  - \frac{5-p}{2(p-1)} \| Q \|_2^2 (\dot \gat - 1 + \frac{v^2}{4} -
  \frac{\dot \la}{\la} \frac{vz}{4} - \frac{v \dot z}{4}) \right| \\
  & \qquad \lec \frac{|\mv|}{s} + \frac{\log s}{s^2}
  \left( 1 + C^*|\la-1| \right),
\end{split}
\]
and so using~\eqref{boot} and~\eqref{consequence},
\begin{equation} \label{m3}
\begin{split}
  \left| \dot \gat - 1 + \frac{v^2}{4} -
  \frac{\dot \la}{\la} \frac{vz}{4} - \frac{v \dot z}{4}  \right| &\lec
  z e^{-\frac{z}{2}} \| \eta \|_{H^1} + 
   \frac{|\mv|}{s} + \frac{\log s}{s^2}
  \left( 1 + C^*|\la-1| \right) \\
  &\lec  \frac{(C^*) \log s}{s^2}
  \left(1 + |\la-1|\right) + \frac{|\mv|}{s};
\end{split}
\end{equation}
\item
applying~\eqref{modeq} with $A=yQ$, $B = 0$ 
(so (b) holds) gives
\[
  \left|
  2\lan \eta, i Q' \ran + \lan \eta, i \Ga \de_{-\frac{z}{2}} y Q \ran
  - \frac{M(Q)}{2}(\dot z - 2v + \frac{\dot \la}{\la}) \right| 
  \lec \frac{|\mv|}{s} + \frac{\log s}{s^2}
  \left( 1 + C^*|\la-1| \right),
\]
and so, using~\eqref{boot2}, \eqref{boot}, \eqref{consequence},
and~\eqref{esefest},
\begin{equation} \label{m2}
\begin{split}
  |\dot z - 2v + \frac{\dot \la}{\la}| &\lec
  |\lan \eta, i T^{\Ga,z} \ran| + \| \eta \|_{2} \| T^{\Ga,z} - Q' \|_2
  + \| \eta \|_{H^1} z e^{-\frac{z}{2}} \\
  & \qquad +   \frac{|\mv|}{s} + \frac{\log s}{s^2}
  \left( 1 + C^*|\la-1| \right) \\
  & \lec \frac{C^{**}}{s \log s}
  + \frac{C^*}{s} e^{-\frac{z}{2}} + \frac{C^*}{s} \frac{\log s}{s}
  +  \frac{|\mv|}{s} + \frac{\log s}{s^2}
  \left( 1 + C^*|\la-1| \right) \\
  & \lec \frac{C^{**}}{s \log s} + \frac{|\mv|}{s}.
\end{split}
\end{equation}
\end{itemize}

By~\eqref{vdyn2}, \eqref{m1}, and~\eqref{consequence}
\begin{equation} \label{m4}
  \left|\dot v - \frac{\dot \la}{\la} v \right| \lec e^{-z} + | \frac{\dot \la}{\la} |
  \lec \frac{(C^*)^2}{s^2}
  \left(1 + |\la-1| \log s \right) + \frac{|\mv|}{s}.
\end{equation}
Combining~\eqref{m1}-\eqref{m4} yields
\[
  |\mv| \lec \frac{C^{**}}{s \log s} + \frac{|\mv|}{s},
\]
and so (choosing $s_0$ large enough)
\begin{equation} \label{mest}
  |\mv|  \lec \frac{C^{**}}{s \log s}.
\end{equation}
Using~\eqref{mest} in~\eqref{m1} gives
\begin{equation} \label{m1a}
  \left| \frac{\dot \la}{\la} \right| \lec 
  \frac{(C^*)^2}{s^2}
  \left(1 + |\la-1| \log s \right).
\end{equation}
Setting $\Lambda(s) = \sup_{s \leq \tau \leq s_{\mathrm f}} |\la(\tau)-1|$, and using~\eqref{ICs}, we have 
\[
\begin{split}
   |\la(s)-1| &\lec |\log \la(s)| = \left| \int_s^{s_{\mathrm f}} \frac{\dot \la(\tau)}{\la(\tau)} d\tau \right|
 \lec (C^*)^2 \int_s^{s_{\mathrm f}} \frac{d \tau}{\tau^{2}}
 \left( 1 + \Lambda(s) \log \tau \right) \\
  &\lec (C^*)^2 \left(\frac{1}{s} + \frac{\log s}{s} \La(s) \right)
\end{split}
\]
and so (choosing $s_0$ large enough),
\begin{equation} \label{lamest}
  |\la(s) - 1| \lec \frac{(C^*)^2}{s}.
\end{equation}
Inserting this back into~\eqref{m1a} establishes~\eqref{modlam}.

Using~\eqref{mest} and~\eqref{lamest} in~\eqref{m3} shows
\[
  \left| \dot \gat - 1 + \frac{v^2}{4} -
  \frac{\dot \la}{\la} \frac{vz}{4} - \frac{v \dot z}{4} 
  \right| \lec \frac{(C^*) \log s}{s^2}, 
\]
which, together with~\eqref{consequence} and~\eqref{mest}, shows~\eqref{modgam}.

The estimate~\eqref{modz} follows 
from~\eqref{mest} and~\eqref{modlam} as
\[
\begin{split}
  |\dot z - 2v| &\lec |\dot z -2v + \frac{\dot \la}{\la} z| + |\frac{\dot \la}{\la}| z \lec |\mv| + \left| \frac{\dot \la}{\la} \right| 
  \log s \\
  & \lec \frac{C^{**}}{s \log s} +  \frac{(C^*)^2 \log s}{s^2}
  \lec \frac{1}{s \log^{\frac{3}{4}} s}.
\end{split}
\]

At this point, setting  
\[
  \vec{m}^* = 
  \left[ \begin{array}{c}
  m_1 \\ m_3 \\ m_4
  \end{array} \right] =
  \left[ \begin{array}{c} 
  \frac{\dot \la}{\la} \\
  \dot \gat - 1 + \frac{v^2}{4} - \frac{\dot \la}{\la} \frac{v}{2} \frac{z}{2} 
  - \frac{v}{2} \frac{\dot z}{2} \\ \frac{\dot v}{2} - \frac{\dot \la}{\la} \frac{v}{2} \end{array} \right]
\]
($\vec{m}$ with its second component removed),
as an immediate consequence of the first three estimates of
Lemma~\ref{mods}, together with~\eqref{consequence} we have
\begin{equation} \label{mod0}
  |\vec{m}| \lec \frac{1}{s \log^{\frac{3}{4}} s}, \qquad \quad  |\vec{m}^*| \lec \frac{1}{s^{\frac{3}{2}}}.
\end{equation}
\begin{remark}
In fact, we have the stronger estimate
\begin{equation} \label{mod0a}
  |\vec{m}^*| \lec \frac{(C^*)^2 \log s}{s^2},
\end{equation}
which accords with~\cite{Ngu19} (modulo the 
additional log factor).
We will use only the weaker estimate of~\eqref{mod0}
in the arguments below, however, since (as we shall see)
it is all we have available in the supercitical 
($p > 5$) case, and we wish to use the same arguments there. 
\end{remark}

\subsection{Localized momentum}
\label{momentum}

To complete the proof of Lemma~\ref{mods},
it remains to show the estimate~\eqref{modinner}; 
that is, to improve the bootstrap estimate~\eqref{boot2}. 
For this, following~\cite{Ngu19}, we introduce the localized momentum functional
\begin{equation} \label{locmom}
  \M(s) = \im \int \bar \eta(s,y) \p_y \eta(s,y)
  \chit_s(y) dy, \qquad \chit_s(y) = \chit \left(
  \frac{|y|}{\log s} \right) dy,
\end{equation}
where $\chit : [0,\infty) \to [0,\infty)$ is smooth, 
non-increasing, and satisfies
\[
  \chit \equiv 1 \mbox{ on } [0,\frac{1}{10}], \quad
  \chit \equiv 0 \mbox{ on } [\frac{1}{8},\infty).
\]
Then by~\eqref{consequence},
\begin{equation} \label{support}
  y \in \supp \tilde\chi_s \; \implies \; 
  |y - \frac{z}{2}| \geq \frac{z}{2} - \frac{1}{8} \log s
  \geq \frac{3}{4} \log s.
\end{equation}

We claim, just as in~\cite[(3.38)]{Ngu19}, the estimate
\begin{equation} \label{mom}
  \left| \frac{d}{ds} \M(s) - \left\lan
  \bar \eta \cdot N''(P_1) \cdot \eta, Q'
  \right\ran - (\dot z - v + \frac{\dot \la}{\la}z)
  \lan i Q', \p_y \eta \ran  \right|
  \lec \frac{(C^*)^2}{s^2 \log s},
\end{equation}
where $\frac{1}{2} \bar \eta \cdot N''(P_1) \cdot \eta$
denotes the terms quadratic in $\eta$ in the Taylor expansion of
$N(P_1 + \eta) = |P_1 + \eta|^{p-1}(P_1+\eta)$
(see~\eqref{taylor} for notation).
The proof of~\eqref{mom} begins with
\[
  \frac{d}{ds} \M(s) = \im \int \bar \eta(s,y) \p_y \eta(s,y)
  [\p_s \chit_s(y)] dy + \lan i \p_s \eta, \;
  2 \chit_s \p_y \eta + \eta \p_y \chit_s \ran
\]
and uses the equation~\eqref{etaeq} in the second term.
We omit the argument here as it proceeds exactly as in~\cite{Ngu19} -- since by~\eqref{support}, the contributions in~\eqref{etaeq} from 
the delta potential and the cut-off $\chi$ do not enter the integral.
We remark only that the weaker
estimate of~\eqref{mod0} (rather than~\eqref{mod0a})
easily suffices.

Next we compute the time derivative of 
$\lan \eta, i T_{z} \ran$,
in a way that refines~\eqref{modeq} by also keeping track of terms
of size $\frac{1}{s^2}$ (which were previously
treated as error terms). Start with
\begin{equation} \label{refine}
  \frac{d}{ds} \lan \eta, i T_{z} \ran =
  \lan -i \p_s \eta, T_{z} \ran 
  + \lan \eta, i \p_s T_{z} \ran.
\end{equation}
For the second term:
\[
  |\lan \eta, i \p_s T_{z} \ran|
  \leq \| \eta \|_2 \| \p_z T_{z} \|_2 |\dot z|
  \lec \frac{C^*}{s} e^{-\frac{z}{2}} (|\dot z-2v| + |v|)
  \lec \frac{C^*}{s^3}
\]
using~\eqref{consequence}, \eqref{efz} and~\eqref{modz}.
For the first term, we use~\eqref{etaeq} to find
\begin{equation} \label{refine2}
\begin{split}
  \lan -i \p_s \eta, T_{z} \ran &= 
  -\lan \eta, L^+_{z} T_z \ran \\
  & \quad + \lan |P_1+\eta|^{p-1}(P_1+\eta) - |P_1|^{p-1} P_1
  - \frac{p+1}{2} Q^{p-1} \eta - \frac{p-1}{2} Q^{p-1} \bar \eta, T_{z} \ran \\
  & \quad + \mv \cdot \lan \Mv \eta, T_{z} \ran
  + \lan \cE_{P_1}, T_{z} \ran
  + (\la-1) \Ga \re \bar{\eta}\left(-\frac{z}{2}\right) 
  T_z\left(-\frac{z}{2}\right),
\end{split}
\end{equation}
where the first and last terms arise from integration by parts
as follows:
\[
\begin{split}
  \lan (\p_y^2 - \la \Ga \de_{-\frac{z}{2}} ) \eta, T_z \ran
  &= -q_{-\frac{z}{2}}^{\la \Ga}(\eta, T_z) \\
  &= -q_{-\frac{z}{2}}^{\Ga}(\eta, T_z) +  (1-\la) \Ga \re  \bar{\eta}\left(-\frac{z}{2}\right) 
  T_z\left(-\frac{z}{2}\right) \\
  &=  \lan \eta, (\p_y^2 - \Ga \de_{-\frac{z}{2}}) T_z \ran
   +  (\la-1) \Ga \re \bar{\eta}\left(-\frac{z}{2}\right) 
  T_z\left(-\frac{z}{2}\right) 
\end{split}
\]
since $\eta(s,\cdot), \; T_z \in \D^\Ga_{-\frac{z}{2}}$.
For the first term in~\eqref{refine2}, we have
\[
  |\lan \eta, L^+_{z} T_{z} \ran| = 
  |\lan \eta, \nu_{z} T_{z} \ran|
  \lec \| \eta \|_2 |\nu_{z}| \| T_{z} \|_2
  \lec \frac{C^*}{s} e^{-z} e^{-\frac{z}{2}}
  \lec \frac{C^*}{s^4}
\]
using~\eqref{consequence}, \eqref{nutight} and~\eqref{esefest}.
\begin{remark} \label{replacement}
This is precisely where the need to replace $Q'$ with 
the eigenfunction $T_{z}$ arises. Otherwise, the delta potential 
produces the term $\Ga \eta\left(-z/2\right) Q'\left(-z/2\right)$,
which is the same size, $O(1/s^2)$, as the leading terms.
\end{remark}
For the second term in~\eqref{refine2}:
using the Taylor estimates~\eqref{tay1} and~\eqref{tay2},
\[
\begin{split}
  &\left| \lan |P_1+\eta|^{p-1}(P_1+\eta) - |P_1|^{p-1} P_1
  - \frac{p+1}{2} Q^{p-1} \eta - \frac{p-1}{2} Q^{p-1} \bar \eta, T_{z} \ran \right. \\
  & \left. \qquad  - \lan \frac{1}{2} \bar \eta \cdot N''(P_1) \eta, 
  Q' \ran \right| \lec
  \| \eta \|_{H^1}^2 \|T_{z} - Q'\|_{H^1}
  + \| \eta \|_{H^1}^3 + \| \eta \|_{H^1}^p \\
  & \qquad \qquad \qquad \qquad + \int
  ( ||P_1|^{p-1}-Q^{p-1}| + ||P_1|^{p-3}P_1^2 - Q^{p-1}|) |\eta|
  |T_{z}| dy \\
  & \qquad \lec \frac{(C^*)^2}{s^2} e^{-\frac{z}{2}} + \left( \frac{C^*}{s} \right)^{\min(3,p)} +
  \| \eta \|_\infty \int e^{-|y+z|}(e^{-(p-2)|y+z|} + 
  e^{-(p-2)|y|} ) |T_{z}(y) | dy \\
  & \qquad \lec  \left( \frac{C^*}{s} \right)^{\min(3,p)} + \frac{C^*}{s} e^{-z}
  \lec   \left( \frac{C^*}{s} \right)^{\min(3,p)}.
\end{split}
\]
using~\eqref{boot}, \eqref{esefest}, \eqref{consequence}
and~\eqref{pointwise}.
For the third term in~\eqref{refine2}:
\[
  | \mv \cdot \lan \Mv \eta, T_{z} \ran -
   \mv \cdot \lan \Mv \eta, Q' \ran| \lec
  |\mv| \| \eta \|_2 \| T_{z} - Q' \|_2 \lec 
  \frac{1}{s \log^{\frac{3}{4}} s} \frac{C^*}{s} e^{-\frac{z}{2}}
  \lec \frac{1}{s^3},
\]
and
\[
  |\mv \cdot \lan \Mv \eta, Q' \ran - \frac{1}{2}(\dot z - v + \frac{\dot \la}{\la}z) \lan i Q', \p_y \eta \ran |
  \lec |\mv^*| \| \eta \|_2 \lec \frac{C^*}{s^{\frac{5}{2}}},
\]
using~\eqref{boot}, \eqref{consequence}, 
\eqref{mod0}, and~\eqref{esefest}.
For the fourth term in~\eqref{refine2}, we use Proposition~\ref{Einnerest2}
and~\eqref{vdyn2}, together with~\eqref{consequence} and~\eqref{mod0} to obtain
\[
  |\lan \cE_{P_1}, T_{z} \ran| \lec \frac{1}{s^2} \left(
  \frac{\log^{\frac{5}{4}} s}{s} + \frac{\log^2 s}{s^2} 
  + \frac{1}{s} \right) \lec \frac{\log^{\frac{5}{4}} s}{s^3}.
\]
\begin{remark} \label{modification}
It is precisely here that we use the modification~\eqref{vdyn2} 
to the motion law induced by the delta potential.
\end{remark}
Finally, for the fifth term in~\eqref{refine2}:
\[
  \left| (\la-1) \Ga \re \bar{\eta}\left(-\frac{z}{2}\right)
  T_z\left(-\frac{z}{2}\right) \right|
  \lec |\la-1| \| \eta \|_{H^1} \left| T_z\left(-\frac{z}{2}\right) \right|
  \lec \frac{C^*}{s} \frac{C^*}{s} e^{-\frac{z}{2}}
  \lec  \frac{(C^*)^2}{s^3}
\]
using~\eqref{lamest}, \eqref{pointwise}, \eqref{boot} and~\eqref{consequence}. Using all the above estimates in~\eqref{refine},
we find
\[
  \left| \frac{d}{ds} \lan \eta, i T_z \ran 
  - \lan \frac{1}{2} \bar \eta \cdot N''(P_1) \eta, 
  Q' \ran  - \frac{1}{2}(\dot z - v + \frac{\dot \la}{\la}z) \lan i Q', \p_y \eta \ran \right| \lec 
   \left( \frac{C^*}{s} \right)^{\min(\frac{5}{2},p)}.
\]
Combining this with~\eqref{mom} gives
\[
  \left| \frac{d}{ds} \left[ \frac{1}{2} \M(s) - \lan \eta, i T_{z} \ran
  \right] \right| \lec 
  \left( \frac{C^*}{s} \right)^{\min(\frac{5}{2},p)}
  + \frac{(C^*)^2}{s^2 \log s} \lec \frac{(C^*)^2}{s^2 \log s}.
\]
Both $\M(s)$ and $\lan \eta, i T^{\Ga,z} \ran$
vanish at $s=s_{\mathrm f}$ by~\eqref{ICxi}, so by time integration,
\[
  \left| \frac{1}{2} \M(s) - \lan \eta, i T_{z} \ran
  \right| \lec \frac{(C^*)^2}{s \log s}, 
\]
hence, using~\eqref{boot} again,
\[
  |\lan \eta, i T_{z} \ran| \lec |\M(s)| +  \frac{(C^*)^2}{s \log s}
  \lec \| \xi \|_{H^1}^2 + \frac{(C^*)^2}{s \log s} \lec  \frac{(C^*)^2}{s \log s}.
\]
By choosing $C^{**} \gec (C^*)^2$, we improve the bootstrap estimate~\eqref{boot2},
showing that $s^{**} = s^*$, and so establishing~\eqref{modinner}.
This completes the proof of Lemma~\ref{mods}.
$\Box$

\subsection{Energy estimate}
\label{enest}

To improve the second estimate in the bootstrap assumption~\eqref{boot},
we use a coercive, almost-conserved functional built from the
localized momentum and a modified linearized energy functional
\begin{equation} \label{en}
\begin{split}
  H(s,\xi) &= \frac{1}{2} \int \left[
  |\p_y \xi|^2 + |\xi|^2 + \frac{2}{p+1} \left(
  -|P+\xi|^{p+1} + |P|^{p+1} + (p+1)|P|^{p-1} \re  (\bar P \xi)  \right) \right] dy
  \\ & \qquad + \frac{1}{2} \la \Ga |\xi(s,0)|^2 
  - \lan \xi, \cE_P^{cut} \ran,
\end{split}
\end{equation}  
with
\[
   \cE_P^{cut} := 2 \p_y \chi \p_y P^0 + P^0 \p_y^2 \chi.
\]
\begin{remark} \label{endef}
This functional is the one used in~\cite[Sec 3.2.3]{Ngu19} 
with the addition of the two terms at the end -- the first is 
simply to incorporate the delta potential, while the second
is needed to handle an effect of the cut-off function
in the approximate solution -- see Remark~\ref{cutpoint} below.
\end{remark}
We will use
\begin{equation} \label{cut}
  |\cE_P^{cut}| \lec e^{-\frac{z}{2}} \mathbbm{1}_{\supp (\p_y \chi)}, \qquad 
  \| \cE_P^{cut} \|_{W^{1,\infty}} \lec e^{-\frac{z}{2}} \lec \frac{1}{s},
\end{equation}
from~\eqref{asy} and~\eqref{consequence}. 

We combine $H$ with the localized momentum functional
\[
  \J(s,\xi) = \frac{v}{2} \left( \M_1 - \M_2 \right),
\]
where
\[
  \M_1 = \M \mbox{ (defined in~\eqref{locmom}) }, \quad \M_2 =   
  \im \int \bar \eta_2 \p_y \eta_2 \chit_s dy, \quad
  \eta_2(s,y) = e^{i \frac{v}{2} y} \xi(s,y-\frac{z}{2}).
\]
\begin{remark} \label{M2}
Since $P$ is even (for any $z$ and $v$) and $u(s,\cdot)$ is even,
so is $\xi(s, \cdot)$. It follows that $\eta_2(y) = \eta_1(-y)$,
and so $\M_2 = -\M_1$, and $\J = v \M$.
\end{remark}
Defining
\[
  \W(s,\xi) = H(s,\xi) - \J(s,\xi),
\]
we have the following analogue of~\cite[Prop 13]{Ngu19}
giving the almost-conservation of $\W$:
\begin{lemma} \label{almost}
We have
\begin{equation} \label{almosteq}
  \left| \frac{d}{ds} \W(s,\xi) \right| \lec  \frac{C^*}{s^3}
\end{equation}
and
\begin{equation} \label{Wbound}
  |\W(s,\xi)| \lec \frac{C^*}{s^2}.
\end{equation}.
\end{lemma}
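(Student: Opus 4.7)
The plan is to estimate each summand of $\W=H-\J$ for \eqref{Wbound} and to exploit the near-Hamiltonian structure of $H$ for \eqref{almosteq}, in each case invoking the bootstrap \eqref{boot}, its consequences \eqref{consequence}, and the modulation estimates of Lemma~\ref{mods}.

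For \eqref{Wbound}, I would bound the quadratic kinetic, mass, and Taylor-remainder contributions to $H$ by $O(\|\xi\|_{H^1}^2+\|\xi\|_{H^1}^{p+1})=O((C^*/s)^2)$ using \eqref{tay1} and Sobolev embedding; the boundary piece $\frac12\la\Ga|\xi(s,0)|^2$ is likewise controlled by Sobolev. The remaining, dominant piece is the linear correction $-\langle\xi,\cE_P^{cut}\rangle$; since \eqref{cut} gives $\|\cE_P^{cut}\|_2\lec e^{-z/2}\lec 1/s$, this is $O(\|\xi\|_2/s)=O(C^*/s^2)$. By Remark~\ref{M2}, $\J=v\M$ with $|v|\lec 1/s$ and $|\M|\lec\|\xi\|_{H^1}^2$, hence $|\J|\lec(C^*)^2/s^3$, which is of lower order.

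For \eqref{almosteq}, I would differentiate $\W$ and substitute the equation for $\xi$ obtained by subtracting the approximate equation \eqref{error3} from the transformed NLS \eqref{NLS2}. The functional $H$ is, up to the subtracted linear piece $-\langle\xi,\cE_P^{cut}\rangle$, precisely the linearized Hamiltonian around $P$ for the $\la\Ga\de_0$-perturbed flow; consequently the principal self-adjoint contributions to $2\re\langle\p_s\xi,\delta H/\delta\bar\xi\rangle$ vanish by antisymmetry, leaving three sources of error: (i) the $\vec m\cdot\vec M\xi$ contribution, controlled by Lemma~\ref{mods}; (ii) the pairing $\langle\cE_P,\delta H/\delta\bar\xi\rangle$, whose principal linear-in-$\xi$ part arises from $\cE_P^{cut}$ and cancels --- up to $O(C^*/s^3)$ errors --- against the time derivative of $-\langle\xi,\cE_P^{cut}\rangle$ taken through the explicit $s$-dependence of $P$; and (iii) the remaining $\chi\cE_{P^0}^{\Ga=0}$ part, whose leading linear-in-$\xi$ contribution is killed by the orthogonality conditions \eqref{orthos} together with a Proposition~\ref{Einnerest2}-type identity. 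Each leftover term is of size $O(C^*/s^3)$ by \eqref{consequence}, \eqref{cut}, and Lemma~\ref{mods}. For $\J=v\M$, using \eqref{mom},
\[
  \tfrac{d}{ds}\J = \dot v\,\M + v\Bigl[\langle\bar\eta\cdot N''(P_1)\cdot\eta,\,Q'\rangle + (\dot z - v + \tfrac{\dot\la}{\la}z)\langle iQ',\p_y\eta\rangle\Bigr] + O\!\Bigl(\tfrac{(C^*)^2}{s^3\log s}\Bigr),
\]
and each piece is $O(C^*/s^3)$ by \eqref{consequence}, the bootstrap, and Lemma~\ref{mods}.

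The main obstacle is to verify cleanly the cancellation in (ii). Without the correction $-\langle\xi,\cE_P^{cut}\rangle$ in \eqref{en}, the cut-off-localized piece $\cE_P^{cut}=2(\p_y\chi)\p_y P^0+(\p_y^2\chi)P^0$, which is supported in $\{1\le|y|\le 2\}$ and is pointwise of size $e^{-z/2}\lec 1/s$ by \eqref{asy}, would contribute a linear-in-$\xi$ term of size $(C^*/s)\cdot(1/s)=C^*/s^2$ to $\frac{d}{ds}H$, destroying the bound. Tracking this cancellation, along with the time derivative of the $\la$-dependent boundary coefficient (which contributes $\frac12\dot\la\Ga|\xi(s,0)|^2=O(|\mv|\|\xi\|_{H^1}^2)=O(C^*/s^3)$ by Lemma~\ref{mods}), and checking that the localized pairing between $\cE_P^{cut}$ and the $\p_y P$ appearing through $\delta H/\delta\bar\xi|_{P}$ does not spoil the gain, is the delicate bookkeeping step of the argument.
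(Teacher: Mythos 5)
Your direct term-by-term estimate of $|\W|$ does \emph{not} yield the claimed bound $C^*/s^2$. The quadratic parts of $H$ are controlled by $\|\xi\|_{H^1}^2 \leq (C^*/s)^2 = (C^*)^2/s^2$, which you correctly note, but this is a factor of $C^*$ worse than the target. That factor matters: in Section~\ref{coercive}, coercivity yields $\|\xi\|_{H^1}^2 \lec \W + \frac{1}{s^2}$, and the bootstrap closes precisely because $\W \lec C^*/s^2$ gives $\|\xi\|_{H^1} \lec \sqrt{C^*}/s$, improvable for large $C^*$. Had $\W$ only been bounded by $(C^*)^2/s^2$, one would get $\|\xi\|_{H^1} \lec C^*/s$ with no gain, and the bootstrap cannot close. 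So the direct estimate you propose for \eqref{Wbound} is genuinely insufficient.

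The paper's proof obtains \eqref{Wbound} by a different route: it notes that $\xi(s_{\mathrm f}, \cdot) = 0$ by \eqref{ICxi}, hence $\W(s_{\mathrm f}, \xi) = 0$, and then integrates the derivative bound \eqref{almosteq} from $s$ to $s_{\mathrm f}$. The crucial point is that the derivative bound $|\frac{d}{ds}\W| \lec C^*/s^3$ carries only \emph{one} power of $C^*$, because the dominant error contributions are of the form $\frac{1}{s^2}\|\xi\|_{H^1}$ (one copy of $\xi$ paired against a small quantity, not quadratic in $\xi$). Time integration then gives $C^*/s^2$ as required. Your sketch of the derivative bound \eqref{almosteq} itself --- exploiting the Hamiltonian structure of $H$, the role of $-\lan\xi,\cE_P^{cut}\ran$ in removing the offending linear-in-$\xi$ cutoff contribution, and the cancellation between the $\p_s H$ main term and the momentum derivative --- is broadly aligned with the paper's argument, though the paper does not invoke Proposition~\ref{Einnerest2} inside the energy estimate (that proposition enters only in the localized momentum argument of Section~\ref{momentum}); the $\chi \cE_{P^0}^{\Ga=0}$ contribution is instead controlled directly using the orthogonality conditions \eqref{orthos}, the identity $L^+_z T_z = \nu_z T_z$, and the smallness of $\nu_z$ and $\| T_z - Q'\|_{H^1}$.
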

\begin{proof}
Since~\eqref{Wbound} follows from time integration of~\eqref{almosteq} and~\eqref{ICxi}, it remains to show~\eqref{almosteq}. 
Compute
\begin{equation} \label{endot}
  \frac{d}{ds} H(s,\xi) = \p_s H(s,\xi) + \lan \p_\xi H(s,\xi), \p_s \xi \ran,
\end{equation}
where
\begin{equation} \label{dH}
  \p_\xi H(s,\xi) = \left( -\p_y^2 + 1 + \la \Ga \de \right) \xi
  -|P + \xi|^{p-1}(P + \xi) + |P|^{p-1} P - \cE_P^{cut}.
\end{equation}

For the first term, we have
\[
  \p_s H(s,\xi) = \left\lan \dot z \p_z P + \dot v \p_v P, 
  -\frac{1}{2}\bar \xi \cdot N''(P) \cdot \xi
  + O(|\xi|^3 + |\xi|^{p+1}) \right \ran + \dot \la \Ga |\xi(s,0)|^2
  - \lan \xi, \p_s \cE_P^{cut} \ran
\]
and
\[
  \p_z P = \frac{1}{2} \chi \left[ 
  \left(e^{-i\frac{v}{2} y} Q' \right)(y + \frac{z}{2})
  - \left(e^{i\frac{v}{2} y} Q' \right)(y - \frac{z}{2}) \right] 
  - i \frac{v}{4} P, 
\]
so
\begin{equation} \label{dsen}
\begin{split}
  &\left| \p_s H(s,\xi) -  \frac{\dot z}{2} \left\lan
  \left(e^{i\frac{v}{2} y} Q' \right)(y - \frac{z}{2})
  -\left(e^{-i\frac{v}{2} y} Q' \right)(y + \frac{z}{2}),
  \; \frac{1}{2}\bar \xi \cdot N''(P) \cdot \xi \right\ran \right| 
  \\ & \qquad \lec (|\dot z| e^{-\frac{z}{2}} +
  |\dot v| + v |\dot z|) \| \xi \|_{H^1}^2 + 
  (|\dot z| + |\dot v|) (\| \xi \|_{H^1}^3 + \| \xi \|_{H^1}^{p+1}) \\
  & \qquad \qquad + \left| \frac{\dot \la}{\la} \right| \| \xi \|_{H^1}^2 
  + (|\dot z| + |\dot v|)e^{-\frac{z}{2}} \| \xi \|_{H^1}
  \lec \frac{1}{s^2} \| \xi \|_{H^1}
\end{split}
\end{equation}
using~\eqref{boot}, \eqref{consequence} and~\eqref{mod0}.

For the second term in~\eqref{endot}, we use the equation for $\xi$:
\begin{equation}  \label{xieq}
\begin{split}
  i \p_s \xi &= (-\p_y^2 + \la \Ga \delta + 1 ) \xi
  - |P + \xi|^{p-1}(P + \xi) + |P|^{p-1} P 
  \\ & \qquad \qquad 
  +i \frac{\dot \la}{\la} \La \xi + (\dot \gat-1) \xi - \cE_{P} \\
  &= \p_\xi H(s,\xi)  + i \frac{\dot \la}{\la} \La \xi 
  + (\dot \gat - 1) \xi - \left[ \cE_{P} - \cE_P^{cut} \right],
\end{split}
\end{equation}
by~\eqref{dH}. So
\begin{equation} \label{dxiH}
  \lan \p_\xi H(s,\xi), \p_s \xi \ran =
  \lan \p_\xi H(s,\xi),  \frac{\dot \la}{\la} \La \xi 
  - i(\dot \gat - 1) \xi + i\left[ \cE_{P} - \cE_P^{cut} \right] \ran.
\end{equation}
\begin{remark} \label{cutoffrem}
By~\eqref{domains}, we have 
$\xi \in C_s \D^{\la \Ga} \cap C^1_s L^2$, and so
$\p_s \xi, \p_\xi H(s,\xi) \in C_s L^2$, justifying this derivation. Note that this justification rests on having introduced the cut-off $\chi$ in the approximate solution $P$: not only does this ensure
the regularity used here, but without it the error
term $\cE_P$ would also introduce a term with 
$\delta$ into the right side of this inner product.
\end{remark}
Now by~\eqref{boot},~\eqref{mod0} and~\eqref{cut},
\begin{equation} \label{lamgam}
\begin{split}
  \left| \frac{\dot \la}{\la}  \lan \p_\xi H(s,\xi), \La \xi \ran \right|
  +  \left| (\dot \gat - 1)  \lan \p_\xi H(s,\xi), \xi \ran \right|
  &\lec |\mv^*| 
  \left( \| \xi \|_{H^1}^2 + \| \xi \|_{H^1}^{p+1} + \frac{1}{s} \| \xi \|_{H^1} \right) \\
  &\lec \frac{C^*}{s^{\frac{5}{2}}} \| \xi \|_{H^1} \lec 
  \frac{1}{s^2}  \| \xi \|_{H^1}.
\end{split}
\end{equation}
To handle the last term in~\eqref{dxiH}, we first recall the 
expressions~\eqref{error3} and~\eqref{error} for $\cE_P$. 
For the contribution
\[
  \left\lan \p_\xi H(s,\xi), -i \mv \cdot \left( e^{i \frac{v}{2}(\cdot)} \Mv Q \right)(\cdot-\frac{z}{2}) \right\ran =
  \mv \cdot \left\lan e^{-i\frac{v}{2} (\cdot)} \p_\xi H(s,\xi)(\cdot + \frac{z}{2}), -i\Mv Q \right\ran
\]
we expand
\[
  e^{-i \frac{v}{2} y} \left[ \p_\xi H(s,\xi) + \cE^{cut}_P \right](y + \frac{z}{2}) = 
  L^+_{z} \re  \eta + i L^-_{z} \im \eta 
  + O(|v||\p_y \eta| + v^2 |\eta| + |\eta|^2 + |\eta|^p),
\]
so using
\[
  |\mv \cdot \Mv Q(y)| \lec |\mv^*| (1 + |y|)e^{-|y|} + |\mv| e^{-|y|},
\]
along with~\eqref{mod0},~\eqref{consequence} and~\eqref{cut}:
\[
\begin{split}
  &\left| \left\lan \p_\xi H(s,\xi), 
  -i \mv\cdot\left( e^{i \frac{v}{2}(\cdot)} \Mv Q \right)(\cdot-\frac{z}{2}) \right\ran 
  + \mv \cdot \lan L^+_{z} \re \eta + i L^-_{z} \im  \eta , i \Mv Q \ran \right|
  \\ & \qquad \qquad 
  \lec |\mv| \left[ \left( |v| + \| \xi \|_{H^1} + \| \xi \|_{H^1}^{p-1} 
  \right) \| \xi \|_{H^1} + \frac{1}{s^2} \right]
  +  |\mv^*| \frac{\log s}{s^2} \lec 
  \frac{1}{s^3}.
\end{split}  
\]
Now using also~\eqref{orthos},
\[
\begin{split}
  & \left\lan \p_\xi H(s,\xi), -i \mv \cdot \left( e^{i \frac{v}{2}(\cdot)} \Mv Q \right)(\cdot-\frac{z}{2}) \right\ran =
  m_1 \lan L^+_z  \re \eta, \La Q \ran
  + m_2 \lan L^+_z \re  \eta, Q' \ran \\ & \qquad \qquad - m_3 \lan L^-_z \im  \eta, Q \ran
  - m_4 \lan L^-_z \im  \eta, yQ \ran
  + O \left( \frac{1}{s^3} \right) \\
  & \quad =  m_1 \lan \re \eta, -2Q -\Ga \de_{-\frac{z}{2}} \La Q \ran
  + m_2 \lan L^+_z \re \eta, 
  Q' \ran - m_3 \lan \im \eta,  -\Ga \de_{-\frac{z}{2}} Q \ran \\ & \qquad \qquad 
  - m_4 \lan \im \eta, -2Q' -\Ga \de_{-\frac{z}{2}} yQ \ran
  + O \left( \frac{1}{s^3} \right) \\
  & \quad =  -\Ga m_1 \re  \eta(-\frac{z}{2}) (\La Q)(-\frac{z}{2})
  + m_2 \lan  L_+^z \re \eta, 
   Q' \ran + \Ga m_3 \im \eta(-\frac{z}{2}) Q(-\frac{z}{2}) \\ & \qquad 
  + m_4 \left( 2\lan \im \eta, Q' \ran + \Ga \im \eta(-\frac{z}{2}) (yQ)(-\frac{z}{2})  \right)
  + O \left( \frac{1}{s^3} \right) \\
  & \quad = m_2 \lan 
  L^+_z \re  \eta, Q' \ran + O\left( \left(  |\mv^*| \frac{\log s}{s} + 
  \frac{1}{s^2} \right) \| \xi \|_{H^1}  + \frac{1}{s^3} \right),
\end{split}  
\]
and since 
\[
\begin{split}
  |\lan L^+_z \re  \eta, Q' \ran| &\leq |\lan \re \eta, L^+_z T_z \ran|
  + |\lan L^+_z \re  \eta, T_z - Q' \ran| \lec  |\nu_z| |\lan \re \eta, T_z \ran|
  + \| \eta \|_{H^1} \| T_z - Q' \|_{H^1} \\
  & \lec \left( e^{-z} + e^{-\frac{z}{2}} \right) \| \xi \|_{H^1} 
  \lec \frac{1}{s} \| \xi \|_{H^1},
\end{split}
\]
we have
\[
  \left| \left\lan \p_\xi H(s,\xi), 
   -i \mv \cdot \left( e^{i \frac{v}{2}(\cdot)} \Mv Q \right)(\cdot-\frac{z}{2}) \right\ran \right| \lec
  \left( \frac{|\mv|}{s} + \frac{1}{s^2} \right) \| \xi \|_{H^1}
  + \frac{1}{s^3} \lec \frac{1}{s^2} \| \xi \|_{H^1} + \frac{1}{s^3}.
\]
Then by symmetry, we also have
\[
  \left| \left\lan \p_\xi H(s,\xi), i \Om\mv \cdot \left( e^{-i \frac{v}{2}} \Mv Q \right)(\cdot+\frac{z}{2}) \right\ran \right| 
  \lec \frac{1}{s^2} \| \xi \|_{H^1} + \frac{1}{s^3}. 
\]
Moreover,
\[
\begin{split}
  &\left| \left\lan \p_\xi H(s,\xi), i(1 - \chi) \left[
  - \mv \cdot \left( e^{i \frac{v}{2}(\cdot)} \Mv Q \right)(\cdot-\frac{z}{2}) + \Om\mv \cdot \left( e^{-i \frac{v}{2}} \Mv Q \right)(\cdot+\frac{z}{2}) \right] \right\ran \right| \\
  & \qquad \qquad \lec (\| \xi \|_{H^1} + \| \xi \|_{H^1}^p  + \frac{1}{s})
  \left(|\mv^*| z e^{-\frac{z}{2}} + |\mv| e^{-\frac{z}{2}}\right) \\
  & \qquad \qquad \lec \left( \frac{1}{s^{\frac{3}{2}}} \frac{\log s}{s} +
  \frac{1}{s \log^{\frac{3}{4}} s} \frac{1}{s} \right) 
  \left(\| \xi \|_{H^1} + \frac{1}{s} \right)
  \lec \frac{1}{s^2} \| \xi \|_{H^1} + \frac{1}{s^3}.
\end{split}
\]
Next, by~\eqref{intersize} and~\eqref{consequence},
\[
  \left| \left\lan \p_\xi H(s,\xi), i\chi G
  \right \ran \right| \lec 
  \left( \| \xi \|_{H^1} + \| \xi \|_{H^1}^p  + \frac{1}{s} \right) 
  \frac{1}{s^2} \lec \frac{1}{s^2} \| \xi \|_{H^1} + \frac{1}{s^3}.
\]
Finally, we have
\[
\begin{split}
  &\left| \left\lan \p_\xi H(s,\xi), i\chi_\rho (\chi_\rho^{p-1}-1) 
  |P^0|^{p-1} P^0 \right \ran \right| \lec
  \left( \| \xi \|_{H^1} + \| \xi \|_{H^1}^p  + \frac{1}{s} \right) 
  e^{-p \frac{z}{2}} \\
  & \qquad \qquad \lec  \left( \| \xi \|_{H^1} + \frac{1}{s} \right) \frac{1}{s^p}
  \lec \frac{1}{s^2} \| \xi \|_{H^1} + \frac{1}{s^3},
\end{split}
\]
and
\[
\begin{split}
  \left| \left\lan \p_\xi H(s,\xi), \frac{\dot \la}{\la} 
  (y \p_y \chi_\rho) P^0 \right\ran \right| &\lec
  |\mv^*| \left(\| \xi\|_{H^1} + \| \xi \|_{H^1}^p + \frac{1}{s} \right)
  e^{-\frac{z}{2}} \\ &\lec \frac{1}{s^{\frac{5}{2}}} 
  \left(\| \xi \|_{H^1} + \frac{1}{s} \right)
  \lec  \frac{1}{s^2} \| \xi \|_{H^1} + \frac{1}{s^3}.
\end{split}
\]
Combining this last series of estimates shows
\begin{equation} \label{nocutest}
  | \lan \p_\xi H(s,\xi), i \left[ \cE_P - \cE_P^{cut} \right] \ran | 
  \lec   \frac{1}{s^2} \| \xi \|_{H^1} + \frac{1}{s^3}.
\end{equation}
\begin{remark} \label{cutpoint}
We had to remove $\cE_P^{cut}$ from $\cE_P$ exactly to obtain
this estimate. The price we pay is the extra term added to the 
energy~\eqref{en}, but this will not destroy
our desired coercivity property, by a simple Young's inequality argument -- see below.
\end{remark}

Putting~\eqref{lamgam} and~\eqref{nocutest} into~\eqref{dxiH} produces
\[
  | \lan \p_\xi H(s,\xi), \p_s \xi \ran |
  \lec \frac{1}{s^2} \| \xi \|_{H^1} + \frac{1}{s^3},
\]
and together with~\eqref{dsen} gives
\begin{equation} \label{encomp}
\begin{split}
  &\left| \frac{d}{ds} H(s,\xi) -  \frac{\dot z}{2} 
  \left\lan
  \left(e^{i\frac{v}{2} y} Q' \right)(y - \frac{z}{2})
  -\left(e^{-i\frac{v}{2} y} Q' \right)(y + \frac{z}{2}),
  \; \frac{1}{2}\bar \xi \cdot N''(P) \cdot \xi \right\ran
  \right| \\ & \qquad \qquad \lec \frac{1}{s^2} \| \xi \|_{H^1} + \frac{1}{s^3}.
\end{split}
\end{equation}

Compute
\[
  \frac{d}{ds} \J(s,\xi) = \frac{1}{2} \dot v \left( \M_1 - \M_2 \right)
  + \frac{v}{2} \frac{d}{ds} \left( \M_1 - \M_2 \right).
\]
Now,
\[
  \left| \frac{1}{2} \dot v \left( \M_1 - \M_2 \right)
  \right| \lec |\dot v| \| \xi \|_{H^1}^2 \lec \frac{C^*}{s^3} \| \xi \|_{H^1}.
\]
Moreover, by~\eqref{mom}, and an analogous calculation for $\M_2$
(or just Remark~\ref{M2}),
\[
\begin{split}
  & \left| \frac{d}{ds} \left( \M_1 - \M_2 \right) - 
  \left\lan \left(e^{i\frac{v}{2}y} Q'\right)(\cdot - \frac{z}{2})
  - e^{-i\frac{v}{2}(y+\frac{z}{2})} Q'(\cdot + \frac{z}{2}), \; \bar \xi \cdot N''(P) \cdot \xi   \right\ran \right| \\
  & \qquad \lec \frac{1}{s \log^{\frac{3}{4}} s} \| \xi \|_{H^1}
  + \frac{(C^*)^2}{s^2 \log s},
\end{split}
\]
and so
\begin{equation} \label{momcomp}
\begin{split}
  &\left| \frac{d}{ds} \J(s,\xi) - v \left\lan 
  \left(e^{i\frac{v}{2}y} Q'\right)(\cdot - \frac{z}{2})
  - e^{-i\frac{v}{2}(y+\frac{z}{2})} Q'(\cdot + \frac{z}{2}), \; \frac{1}{2} \xi \cdot N''(P) \cdot \xi \right\ran \right| 
  \\ & \qquad \lec \frac{1}{s^2 \log^{\frac{3}{4}} s} \| \xi \|_{H^1} 
  + \frac{(C^*)^2}{s^3 \log s} + \frac{C^*}{s^3} \| \xi \|_{H^1}
  \lec \frac{1}{s^2} \| \xi \|_{H^1} + \frac{1}{s^3}.
\end{split}
\end{equation}

Using~\eqref{encomp} and~\eqref{momcomp}, 
together with~\eqref{mod0}, we have
\[
\begin{split}
  \left| \frac{d}{ds} \W(s,\xi) \right| &\lec
  |\dot z - 2v| \| \xi \|_{H^1}^2 + \frac{1}{s^2} \| \xi \|_{H^1}
  + \frac{1}{s^3} \\
  &\lec \frac{1}{s^2} \| \xi \|_{H^1} + \frac{1}{s^3}
  \lec \frac{C^*}{s^3},
\end{split}
\]
which completes the proof of Lemma~\ref{almost}.
\end{proof}

\subsection{Coercivity} \label{coercive}

As in~\cite[Prop. 13]{Ngu19}, due to the orthogonality conditions~\eqref{orthos},
with $p < 5$ we have the standard coercivity property
\[
\begin{split}
  \| \xi \|_{H^1}^2 &\lec \W(s,\xi) 
  + \lan \xi, \cE_P^{cut} \ran - \frac{1}{2} \la \Ga |\xi(s,0)|^2 \\
  &\lec \W(s,\xi) + \lan \xi, \cE_P^{cut} \ran,
\end{split}
\]
so by~\eqref{cut} and Young's inequality,
\[
   \| \xi \|_{H^1}^2 \leq C \W(s,\xi) + C\frac{1}{s} \| \xi \|_2
   \leq  C \W(s,\xi) + \frac{C^2}{2s^2} + \frac{1}{2} \| \xi \|_{H^1}^2,
\]
and so, using~\eqref{Wbound},
\[
   \| \xi \|_{H^1}^2 \lec \W(s,\xi) + \frac{1}{s^2}
   \lec \frac{C^*}{s^2}.
\]
That is,
\[
  \| \xi \|_{H^1} \leq \frac{C \sqrt{C^*}}{s},
\]
and we close the second bootstrap estimate of~\eqref{boot}
by choosing $C^* \geq 4 C^2$, arriving at
\begin{equation} \label{xiest}
  \| \xi(s,\cdot) \| \lec \frac{1}{s}.
\end{equation}

\subsection{Topological argument} \label{topology}

To complete the proof of Proposition~\ref{uniform}, it remains
to show that for all $s_{\mathrm f} > s_0$ ($s_0$ sufficiently large),
there is a choice of $z_{\mathrm f}$ in~\eqref{ICs}
such that the first bootstrap assumption of~\eqref{boot} 
indeed holds for all $s \in [s_0, s_{\mathrm f}]$
-- i.e., that $s^* = s_0$.
This part proceeds essentially as in~\cite[Sec 3.2.4]{Ngu19}.
Note we may freely use the estimates~\eqref{mod0}, \eqref{consequence}
and~\eqref{xiest}, which have all been established under the
first bootstrap assumption of~\eqref{boot}. 
 
First estimate, using also \eqref{vdyn2},
\[
  |2 v \dot v + f(z) e^{-z} \dot z|
  \leq 2 v | \dot v + f(z)e^{-z}| + f(z) e^{-z} |\dot z - 2v|
  \lec \frac{1}{s} \frac{1}{s^3} + \frac{1}{s^2}\frac{1}{s \log^{\frac{3}{4}} s} \lec \frac{1}{s^3 \log^{\frac{3}{4}} s}
\]
so the energy for the reference classical dynamics satisfies, 
using also the final condition~\eqref{ICs},
\[
  |v^2 - F(z)| \leq \int_s^{s_{\mathrm f}} 2 |v \dot v + f(z) e^{-z} \dot z| ds
  \lec \frac{1}{s^2 \log^{\frac{3}{4}} s}.
\]
Then, factoring,
\[
  |v - \sqrt{F(z)}| = \frac{|v^2 - F(z)|}{v + \sqrt{F(z)}} 
  \lec  \frac{1}{s \log^{\frac{3}{4}} s},
\]
and so
\[
  \left| \frac{d}{ds} \zeta(z) - 1 \right| = \left| \frac{\dot z}{2 \sqrt{F(z)}} - 1 \right|
  \leq \left| \frac{v - \sqrt{F(z)}}{\sqrt{F(z)}} \right|
  +  \left| \frac{\dot z - 2v}{2\sqrt{F(z)}} \right|
  \lec \frac{1}{\log^{\frac{3}{4}} s}.
\]
With this estimate -- which is the analogue of~\cite[eq (3.4.6)]{Ngu19} --
in hand, the topological argument
runs exactly as in~\cite[Sec 3.2.4, Step 2]{Ngu19},
to show that $z(s_{\mathrm f}) = z_{\mathrm f}$ may be chosen so the bounds above, as well as 
\[   
  |\zeta(z) - s| \leq \frac{s}{\log^{\frac{1}{2}} s},
\]
hold for $s \in [s_0,s_{\mathrm f}]$, uniformly in $s_{\mathrm f}$ as $s_{\mathrm f} \to \infty$. 

This estimate, together with its above-established consequences~\eqref{consequence},~\eqref{xiest} and~\eqref{lamest}, 
completes the proof of Proposition~\ref{uniform}. $\Box$

\subsection{Uniform estimates in the supercritical case} \label{super}

Here we explain how the arguments above may be modified to prove the uniform estimates needed for $p > 5$, Proposition~\ref{uniform2}.

\begin{proof}
We will just highlight the places where the argument differs from the above proof of Proposition~\ref{uniform}.

First, by~\eqref{finalorth} and the
parameter modulation of Proposition~\ref{modulation}, two of the
orthogonality conditions of~\eqref{orthos} still
hold,
\[
  \lan \eta(s,\cdot), i \La Q \ran \equiv 0, 
  \quad
  \lan \eta(s,\cdot), y Q \ran \equiv 0,
\]
but for the remaining one, we now merely have
\[
  \frac{d}{ds} \lan \eta(s,\cdot), Q \ran \equiv 0, \; \implies \;
  |\lan \eta(s,\cdot), Q \ran| \equiv |\lan \eta(s_{\mathrm f},\cdot), Q \ran|
  \lec  |\bv| \lec (s_{\mathrm f})^{-\frac{3}{2}} \lec s^{-\frac{3}{2}}.
\]
This bound suffices, as a replacement
for $\lan \eta, Q \ran = 0$ in~\eqref{m3},
at the cost of weakening~\eqref{modgam} to
\[
  |\dot\gat - 1| \lec \frac{1}{s^{\frac{3}{2}}},
\]
though~\eqref{mod0}, which is what is used
in all subsequent estimates, still holds.

Second, in the estimates involving
the localized momentum, and the linearized energy, time integration produces
a non-zero boundary term, since 
$\eta(s_{\mathrm f},\cdot) \not = 0$. However,
\[
  \| \eta(s_{\mathrm f},\cdot) \|_{H^1} \lec 
  s_{\mathrm f}^{-\frac{3}{2}}
\]
is a sufficient replacement.

Third, to ensure coercivity of the linearized energy, we add the inner-product estimates
\[
 |\lan \eta(s,\cdot), i Y^+ \ran| \leq \frac{1}{s^{\frac{3}{2}}}, \quad 
  |\lan \eta(s,\cdot), i Y^- \ran| \leq \frac{1}{s^{\frac{3}{2}}}
\]
to the bootstrap assumption.
The time derivatives of these inner-products
are estimated just as in~\cite[Lemma 21]{Ngu19}, using~\eqref{modeq} and observing that due to the
exponential decay~\eqref{unstable} of the eigenfunctions $Y^{\pm}$, the contributions from
the delta potentials in $L^{\pm}_z$ are 
lower-order error terms.
The bootstrap is then closed by choosing, simultaneously, $z_{\mathrm f}$ and $a_{\mathrm f}$ exactly as in~\cite[Lemma 23]{Ngu19}. 
\end{proof}

\section{Nonexistence for stronger potentials}
\label{non}

In this section we give the proof of Theorem~\ref{nonexistence}. 
We focus on the subcritical case $p < 5$,
and refer the reader to Sections~\ref{supercrit} and~\ref{super} for how to modify the
arguments in the supercritcal case.

{\it Proof of Theorem~\ref{nonexistence} for $p < 5$}:
we assume the existence of a solution $u$ of~\eqref{NLS}
satisfying, for $t \geq T_0$ (for some $T_0$)
\[
  \left\| u(t,\cdot) - e^{i \th} \left[ Q(\cdot - \frac{\zz}{2})
  + Q(\cdot + \frac{\zz}{2}) \right] \right\|_{H^1} \leq \frac{c_1}{t},
\]
for some $c_1 > 0$, $\th = \th(t) \in \R$ and $\zz = \zz(t)$ satisfying
\begin{equation} \label{log}
  |\zz(t) - 2 \log t| \leq c_2
\end{equation}
as $t \to \infty$. 
We will derive a contradiction for $\Ga > 2$.

First note we may replace the above estimate with
\begin{equation} \label{2sol}
  \left\| u(t,\cdot) - e^{i \th} P(\cdot; \zz(t),0)
  \right\|_{H^1} \leq \frac{c_1}{t}
\end{equation}
by adjusting $c_1$.

We apply the parameter modulation Lemma~\ref{modulation}
(with Remark~\ref{fullmod}), imposing 
a full set of four orthogonality conditions to determine four parameters, velocity $v$ included,
to obtain:
\[
  \la(s) > 0, \; \gat(s) \in \R, \;
  z(s) > 0, \; v(s) \in \R,
\]
on a time interval
\begin{equation} \label{interval}
  s \in [s_0, \; s_{\mathrm f}], \quad s_{\mathrm f} = 
  t_{\mathrm f} \quad \mbox{ chosen sufficiently large},
\end{equation}
with
\[
  t(s) = t_{\mathrm f} - \int_s^{s_{\mathrm f}} \frac{ds}{\la^2(s)},
\]
\[
  e^{- i \gat(s)} \la^{-\frac{2}{p-1}}(s) u(t(s),\frac{y}{\la(s)})
  = P(y;z(s),v(s)) + \xi(s,y),
\]
and
\[
  \eta(s,y) = e^{-i \frac{v(s)}{2} y} \xi(s, y + \frac{z(s)}{2})
\]
satisfying the orthogonality conditions
\begin{equation}
\label{orthos2}
  0 \equiv \lan \eta, Q \ran \equiv \lan \eta, yQ \ran \equiv
  \lan \eta, i \La Q \ran \equiv \lan \eta, i T_z \ran.
\end{equation}
Note that the final data satisfy
\begin{equation} \label{final}
\begin{split}
  &|\la(s_{\mathrm f}) - 1| + |\gat(s_{\mathrm f}) - \th(t_{\mathrm f})|
  + |z(s_{\mathrm f}) - \zz(t_{\mathrm f})| + |v(s_{\mathrm f})| 
  + \| \xi(s_{\mathrm f},\cdot) \|_{H^1} \\
  & \qquad \lec \left\| u(t_{\mathrm f},\cdot) - e^{i \th(t_{\mathrm f})} P(\cdot; \zz(t_{\mathrm f}),0)
  \right\|_{H^1}
  \leq \frac{c_1}{t_{\mathrm f}} 
\end{split}
\end{equation}
by the assumption~\eqref{2sol}, and
so the hypotheses~\eqref{endparam}
and~\eqref{endclose} of the modulation lemma indeed hold
if $t_{\mathrm f}$ is chosen sufficiently large. 

We make two bootstrap assumptions:
\begin{equation} \label{zboot}
  |z(s) - 2 \log s| \leq 3 c_2
\end{equation}
and
\begin{equation} \label{xiboot}
  \| \xi(s,\cdot) \|_{H^1} \leq \frac{c_4}{s},
\end{equation}
and work on a time interval of the form~\eqref{interval}
on which these both hold. By~\eqref{final}, \eqref{log} and
\[
  |z(s_{\mathrm f}) - 2 \log s_{\mathrm f}| \leq |z(s_{\mathrm f}) - \zz(t_{\mathrm f})|
  + |\zz(t_{\mathrm f}) - 2 \log t_{\mathrm f}| \leq \frac{Cc_1}{t_{\mathrm f}} + c_2,
\]
this time interval is non-trivial if we choose
\[
   c_4 \gec c_1, \qquad 
   t_{\mathrm f} \geq \frac{C c_1}{c_2}.
\]
It follows from~\eqref{zboot} that
\begin{equation} \label{expest}
  e^{-\frac{z}{2}}  = \frac{1}{s} e^{\frac{1}{2}(2\log s - z)}
  \in  [ e^{-\frac{3}{2}c_2} \frac{1}{s}, e^{\frac{3}{2} c_2} \frac{1}{s}].
\end{equation}

Estimates for the time derivatives of the parameters
are obtained just as in Section~\ref{modparest}, by taking suitable inner
products with the PDE, and using~\eqref{expest}, resulting in
\begin{equation} \label{dotest}
  |\frac{\dot \la}{\la}| + |\dot v| \lec
  \frac{c_4^2}{s^2}, \qquad 
  |\dot \gat-1| \lec \frac{c_4 \log s}{s^2}, \qquad 
  |\dot z - 2v| \lec \frac{c_4 \log s}{s^2}.
\end{equation}

The energy estimate is done in the same way as
in Section~\ref{enest}, yielding
\[
  |\frac{d}{ds} \W(s)| \lec \frac{c_4}{s^3} \;\; \implies \;\;
  \W(s) \lec \| \xi(s_{\mathrm f},\cdot) \|^2 + \frac{c_4}{s^2}
  \lec \frac{1}{s_{\mathrm f}^2} + \frac{c_4}{s^2} \lec  \frac{c_4}{s^2}, 
\]
and so the bootstrap~\eqref{xiboot} is closed,
using the coercivity of $\W$ under~\eqref{orthos2},
\[
  \| \xi(s,\cdot) \|_{H^1} \lec \sqrt{\W(s)} \lec \frac{\sqrt{c_4}}{s}
\]
by choosing $c_4$ large enough.

So now we have
\[
  \| \la^{-\frac{2}{p-1}}(s) u(t(s),\frac{\cdot}{\la(s)}) -  e^{i \gat(s)} P(\cdot;z(s),v(s)) \|_{H^1} \lec \frac{1}{s}.
\]
Since
\[
  |\la(s)-1| \lec |\log \la(s)| = |\log \la(s_{\mathrm f}) - \int_{s}^{s_{\mathrm f}}
  \frac{\dot \la(\tau)}{\la(\tau)} d\tau |
  \lec \frac{1}{s_{\mathrm f}} + \frac{1}{s} \lec \frac{1}{s}
\]
by~\eqref{final} and~\eqref{dotest}, we have
\[
  \| \la^{-\frac{2}{p-1}}(s) u(t(s),\frac{\cdot}{\la(s)})
  - u(t(s),\cdot) \|_{H^1} \lec |\la(s)-1| \lec \frac{1}{s},
\]
and so
\[
  \| u(t(s),\cdot) -   e^{i \gat(s)} P(\cdot;z(s),v(s)) \|_{H^1} \lec \frac{1}{s}.
\]
Combining this with~\eqref{2sol}, and using
\begin{equation} \label{stest}
  |t(s) - s| \leq \int_s^{s_{\mathrm f}} |\la^2(\tau)-1| \frac{d \tau}{\la^2(\tau)}
  \lec \int_s^{s_{\mathrm f}} \frac{d \tau}{\tau^2} \lec \frac{1}{s},
\end{equation}
so that
\[
  \frac{1}{t(s)} \lec \frac{1}{s},
\]
we see that
\[
  \| P(\cdot;\zz(t(s)),0) -  e^{i (\gat(s) - \th(t(s))}
  P(\cdot; z(s),v(s)) \|_{H^1} \lec \frac{1}{s},
\]
and so applying Lemma~\ref{Pdiff} 
(shown in Section~\ref{comps}), we arrive at
\[
  |z(s) - \zz(t(s)) | \lec \frac{1}{s},
\]
from which follows
\[
  |z(s) - 2 \log s| \leq |\zz(t(s)) - 2 \log t(s)|
  + 2|\log t(s) - \log s| + \frac{C}{s} \\
  \leq c_2 + \frac{C}{s}
\]
using~\eqref{log} and~\eqref{stest}.
This shows that the bootstrap estimate~\eqref{zboot}
holds at least on the interval
$s \in \left[ \frac{C}{2c_2}, s_{\mathrm f} \right]$.

We repeat the calculations from Section~\ref{momentum} of the time derivatives
of the localized momentum, and of 
$\lan \eta, i T_z \ran \equiv 0$,
to find:
\[
  C_2 \left( \dot v + f(z) e^{-z} \right)
  = \frac{d}{ds} \M(s) + O(\frac{1}{s^2 \log s}), 
\]
and so, integrating in $s$,
\[
  C_2 \left( v(s_{\mathrm f}) - v(s)  + \int_s^{s_{\mathrm f}}
  f(z(\tau)) e^{-z(\tau)} d \tau \right) =
  \M(s_{\mathrm f}) - \M(s) + O(\frac{1}{s \log s}).
\]
Using $\Ga > 2$ and~\eqref{sign}, 
as well as 
\[
  |\M(s)| \lec \| \xi(\cdot,s) \|_{H^1}^2 \lec \frac{1}{s^2},
  \qquad |v(s_{\mathrm f})| \lec \frac{1}{s_{\mathrm f}} \lec \frac{1}{s},
\]
and~\eqref{expest}, we get
\[
  v(s) \lec -\int_s^{s_{\mathrm f}} e^{-z(\tau)} d \tau 
  + \frac{C}{s} \lec
  -\int_s^{s_{\mathrm f}} \frac{d \tau}{\tau} 
  + \frac{C}{s}
  = -\log \frac{s_{\mathrm f}}{s} + \frac{C}{s}.
\]
Thus for some constant $c > 0$, we have
\[
  v(s) \leq 0 \; \mbox{ for } \;
  s \in (s_0, s_1], \qquad s_1 = s_{\mathrm f} - c.
\]
Then
\[
\begin{split}
  z(s_0) - z(s_1) &= -\int_{s_0}^{s_1} \dot z(\tau) d \tau
  = -2\int_{s_0}^{s_1} v(\tau) d \tau + \int_{s_0}^{s_1} (2v(\tau) - \dot z(\tau)) d \tau  \\
  &\gec -\int_{s_0}^{s_1} \frac{\log \tau}{\tau^2} d\tau
  \gec -\frac{\log s_0}{s_0}.
\end{split}
\]
using~\eqref{dotest}. On the other hand, by~\eqref{zboot},
\[
  z(s_0) - z(s_1) \leq 2 \log s_0 - 2 \log s_1
  + 6c_2,
\]
which provides a contradiction by taking $s_{\mathrm f}$ (hence also $s_1$) sufficiently large.
$\Box$

\section{Properties of the perturbed eigenfunction} 
\label{properties}

Here we prove Proposition~\ref{efprop}.
\begin{proof}
Recall
\[
  L^+ = -\p_x^2 + 1 - p Q^{p-1}(x), \qquad
  L^+_z = L^+ + \Ga \de_{-\frac{z}{2}},
\]
and denote their corresponding quadratic forms by
\[
\begin{split}
   q^+(f,g) &=  \re  \int_\R \left( \bar{f}'(x) g'(x) + 
  (1 - p Q^{p-1}(x)) \bar{f}(x) g(x) \right) dx \\
  q^+_z(f,g) &= q^+(f,g) +  \Ga 
  \re \bar{f}\left(-\frac{z}{2}\right) g \left(-\frac{z}{2}\right), \qquad f, g, \in H^1(\R).
\end{split}
\]
The operator $L^+$ is a classical Schr\"odinger 
operator, and its spectral properties are well known -- 
see, eg \cite{PT33}. We recall here that 
it has ground state
\[
  0 < \phi_0 \in H^2(\R) \mbox{ even, }
  \| \phi_0 \|_2 = 1, \;\; L^+ \phi_0 = -e_0 \phi_0, \;\; e_0 = \frac{(p+1)^2}{4}-1, 
\]
with exponential decay,
\begin{equation} \label{gsexp}
  |\phi_0(y)| + |\phi_0'(y)| \lec e^{-\frac{p+1}{2}|y|},
\end{equation}
and $Q'$ (which is odd) is its first excited state:
\begin{equation} \label{es}
  L^+ Q' = 0, \mbox{ and } 
  H^1(\R) \ni f \perp \phi_0, Q' \; \implies
  (f, \; L^+ f) \gec \| f \|_{H^1}^2.
\end{equation}

Consider first the ground state eigenvalue and eigenfunction of $L^+_{z}$:
\[
  L^+_{z} \phi_{z} = -e_{z} \phi_{z}, \qquad
  \phi_z \in D^\Ga_{-\frac{z}{2}}, \qquad \| \phi_{z} \|_2 = 1,
  \qquad \phi_{z} > 0.
\]
Since $L^+_{z} \geq L^+$, we have $e_{z} < e_0$, and
by variational characterization,
\[
  -e_{z} \leq q^+_z(\phi_0, \phi_0) = -e_0 + \Ga |\phi_0(-z/2)|^2,
\]
so by~\eqref{gsexp},
\begin{equation} \label{gsest}
  e_{z} = e_0 - \tilde e_{z}, \qquad
  0 < \tilde e_{z} \lec \Ga e^{-\frac{p+1}{2}z}.
\end{equation}
Decompose
\begin{equation} \label{gssplit}
  \phi_{z} = (1-a) \phi_0 + b \frac{Q'}{\| Q' \|_2} + h, \qquad
  h \perp \phi_0, Q', \qquad a,b \in \R,
\end{equation}
so
\begin{equation} \label{normsplit}
  1 = \| \phi_{z} \|_2^2 = (1 - a)^2 + b^2 + \| h \|_2^2.
\end{equation}
Since $\phi_0, \; Q' \; \in H^2$,
\[
  q^+(\phi_0,Q') = q^+(\phi_0,h) = q^+(Q',h) = 0,
\]
and so
\[
\begin{split}
  -e_{z} &= q_z^+( \phi_{z}, \phi_z) = q^+(\phi_z,\phi_z) + \Ga |\phi_z(-\frac{z}{2})|^2 \\ 
  &= -(1-a)^2 e_0 + q^+(h, h)  + \Ga |\phi_z(-\frac{z}{2})|^2. 
\end{split}
\]
Then by~\eqref{normsplit} and~\eqref{es},
\[
  \tilde e_{z} - \left(  b^2 + \| h \|_2^2 \right) e_0 
  = q^+(h,h) +  \Ga |\phi_z(-\frac{z}{2})|^2
  \gec \| h \|_{H^1}^2,
\]
hence by~\eqref{gsest},
\[
  \| h \|_{H^1}^2 + b^2
  \lec \Ga  e^{-\frac{p+1}{2}z}.
\]
Then from~\eqref{normsplit}
\[
  |1 - (1-a)^2| \lec \Ga  e^{-\frac{p+1}{2}z},
\]
and by~\eqref{gssplit}, since
$\phi_z, \; \phi_0 \; > 0$ we have
$|a| \lec \Ga  e^{-\frac{p+1}{2}z}$, and so
\begin{equation} \label{gsef}
  \| \phi_{z} - \phi_0 \|_{H^1} \lec |a| + |b| + \| h \|_{H^1} 
  \lec  \sqrt{\Ga}  e^{-\frac{p+1}{4} z}.
\end{equation}

Now we turn to the first excited state eigenvalue and eigenfunction of $L^+_z$. Decompose
\[
  Q' = b \| Q' \|_2 \; \phi_{z} + g, \qquad 
  g \perp \phi_{z}, \qquad b \in \R,
\]
and note
\begin{equation} \label{gnorm}
  \| g \|_2^2 = (1-b^2) \| Q' \|_2^2 = \| Q' \|_2^2 + O(e^{-\frac{p+1}{2}z})
\end{equation}
by~\eqref{gsef}. Compute, using
\[
  q^+(Q',Q') = q^+(Q',\phi_z) = q^+(\phi_z,Q') = 0
\]
(since $Q' \in H^2$, $\phi_z \in H^1$),
\[
\begin{split}
  q_z^+( g, g) &= q^+( Q' - b \| Q' \|_2 \phi_{z} , 
  Q' - b \| Q' \|_2 \phi_{z}) + \Ga |g(-\frac{z}{2})|^2 \\
  &=
  b^2 \| Q'\|_2^2 \; q^+(\phi_z,\phi_z)  + \Ga |g(-\frac{z}{2})|^2 \\
  &= b^2 \| Q'\|_2^2 \left( q_z^+(\phi_z,\phi_z) - \Ga|\phi_z(-\frac{z}{2})|^2  \right) + \Ga |g(-\frac{z}{2})|^2 \\ 
  &\leq -e_z b^2 \|Q'\|_2^2 + \Ga \left( |g(-\frac{z}{2})|^2 
  - |\phi_z(-\frac{z}{2})|^2 \right) \\
  & \leq  -e_z b^2 \|Q'\|_2^2 + \Ga |Q'(-\frac{z}{2})|^2
  + 2 \Ga b^2 \| Q'\|_2 |\phi_z(-\frac{z}{2})|^2 \\
  & \leq \Ga c_p^2 e^{-z} + 
  O(e^{-\frac{p+1}{2} z}) \quad < \inf \sigma_{ess}(L^+_{z}) = 1, 
\end{split}
\]
using~\eqref{asy},~\eqref{gsef} and~\eqref{gsexp}. 
By the variational characterization, this estimate, together with~\eqref{gnorm}, establishes
the existence of a first excited state eigenvalue $\nu_{z}$, satisfying
\begin{equation} \label{esest} 
  0 < \nu_z \leq \frac{q^+_z(g,g)}{\| g \|_2^2} \leq 
  \frac{\Ga c_p^2}{\| Q' \|_2^2} e^{-z} + 
  O(e^{-\frac{p+1}{2} z})
\end{equation}
($\nu_{z} > 0$ follows from $L^+_{z} \geq L^+$),
and eigenfunction  $T_{z}$, i.e. a solution to~\eqref{eigen}. Choose $T_z$ to be normalized as~\eqref{normal} and real-valued. 
Decompose
\begin{equation} \label{esef}
  T_{z} = \al \phi_0 + \be Q'  + f, \qquad f \perp \phi_0, Q',
  \qquad \al, \be \in \R,
\end{equation}
so that
\begin{equation} \label{esnorm}
  \| Q' \|_2^2 = \| T_{z} \|_2^2 = \al^2 + \be^2 \| Q' \|_2^2 + \| f \|_2^2.
\end{equation}
Note
\[
\begin{split}
  0 &= (\phi_{z}, T_{z}) =  \left( (1-a) \phi_0 + b \frac{Q'}{\| Q' \|_2} + h,
  \;  \al \phi_0 + \be Q'  + f \right) \\
  &= \al(1-a) + b \be \| Q' \|_2 + (h,f),
\end{split}
\]
so by~\eqref{gsef},
\begin{equation} \label{alest}
  |\al| \lec \frac{1}{1-a} \left( |b| + \| h \|_2 \right)
  \lec  \sqrt{\Ga}  e^{-\frac{p+1}{4}z}.
\end{equation}
Now
\begin{equation} \label{esquad}
\begin{split}
  \| Q' \|_2^2 \; \nu_{z} &= q^+_z(T_z,T_z)
  = q^+(T_{z}, T_{z}) + \Ga T_{z}^2(-z/2) \\
  &= -\al^2 e_0 + q^+(f, f) + \Ga T_{z}^2(-z/2),
\end{split}
\end{equation}
where we used
\[
  0 = q^+(\phi_0,Q') = q^+(\phi_0,f) = q^+(Q',f)
\]
(since $\phi_0, \; Q' \; \in H^2$, $f \in H^1$),
so by~\eqref{es}, \eqref{alest} and~\eqref{esest}, 
\begin{equation} \label{fest}
  \| f \|_{H^1}^2 \lec q^+(f, f) \lec \al^2 + \nu_z
  \lec \Ga  e^{-\frac{p+1}{2}z} + \Ga e^{-z}
  \lec \Ga e^{-z}.
\end{equation}
From~\eqref{esnorm} and~\eqref{alest}, 
\[
  0 \leq 1 - \be^2 \lec \al^2 + \| f \|_2^2 \lec  \Ga e^{-z},
\]
and so
\[
  \be = \pm 1 + O(\Ga e^{-z}),
\]
and we may replace $T_{z}$ by $-T_{z}$ if needed, to produce 
\[
  |\be - 1| \lec \Ga e^{-z},
\]
which combined with~\eqref{esef}, \eqref{alest} and~\eqref{fest},
yields the $H^1$ bound of~\eqref{esefest}. 

Using~\eqref{esest} and~\eqref{alest}, \eqref{esquad} we also 
get an upper bound
\begin{equation} \label{Tupper}
\begin{split}
  \Ga T_{z}^2\left(-\frac{z}{2}\right) &\leq 
  \| Q' \|_2^2 \; \nu_{z} + \al^2 e_0 \\
  &\leq \Ga c_p^2 e^{-z} \left( 1 + O(e^{-\frac{p-1}{2} z}) \right).
\end{split}
\end{equation}

Next we show the pointwise estimates~\eqref{pointwise}.
Note that the $L^1$ estimate of~\eqref{esefest} then follows directly by integrating~\eqref{pointwise}.
Set
\[
  g := T_{z} - Q',
\]
so that the $H^1$ estimate of~\eqref{esefest} reads
\begin{equation} \label{gbound}
  \| g \|_{H^1} \lec \Ga^{\frac{1}{2}} e^{-\frac{z}{2}},
\end{equation}
and note from~\eqref{normal} that 
\[
\begin{split}
  &0 = \| Q' + g \|_2^2 - \| Q' \|_2^2 =
  2 ( Q', g ) + \| g \|_2^2 \\
  & \;\; \implies \;\;
  0 \leq -2( Q', g ) = \| g \|_2^2 \lec \Ga e^{-z}.
\end{split}
\]
Since
\[
  \left| \int_{-\infty}^{-\frac{z}{2}} Q' \; g \right| \lec
  \left( \int_{-\infty}^{-\frac{z}{2}} e^{2y} \right)^{\frac{1}{2}}
  \| g \|_2 \lec \Ga^{\frac{1}{2}} e^{-z},
\]
we must in fact have
\begin{equation} \label{partorth}
  \left| \int_{-\frac{z}{2}}^\infty Q' \; g \right| \lec
  (\Ga + \Ga^{\frac{1}{2}}) e^{-z}.
\end{equation}
To the right of the singularity, the eigenvalue equation~\eqref{eigen} may be written
\begin{equation} \label{eigen2}
  y > -\frac{z}{2} \; \implies \;
  L^+ g = s, \quad s(y) := \nu_{z}(Q'(y) + g(y)).
\end{equation}
By~\eqref{esest}, \eqref{asy}, and~\eqref{gbound}, we have
\begin{equation} \label{sbound}
  |s(y)| \lec \Ga e^{-z} ( e^{-|y|} + e^{-\frac{z}{2}} ).
\end{equation}
We will express the solution of~\eqref{eigen2} 
by the variation of parameters formula, introducing a second element of
$\ker(L^+)$, complementary to $Q'$:
\begin{equation} \label{Y+}
  L^+ Y = 0, \qquad Y(y) \sim e^y, \;\;
  Y'(y) \sim e^y, \mbox{ as } y \to \infty,
  \qquad Y \mbox{ even}.
\end{equation}
By~\eqref{asy}, the Wronskian of $Y$ and $Q'$ is 
\[
  W[Y, \; Q'] = \left| 
  \begin{array}{cc}  e^y & -c_p e^{-y} \\
  e^y & c_p e^{-y} \end{array} \right| = 2 c_p,
\]
and since $Y$ is even,
\begin{equation} \label{Y-}
  Y(y) \sim e^{-y}, \;\;
  Y'(y) \sim -e^{-y}, \;\; y \to -\infty.
\end{equation}
Since $g \in H^1$ and solves~\eqref{eigen2}, it may be represented
via the variation of parameters formula as
\begin{equation} \label{rep}
  2 c_p g(y) = -Y(y) \int_y^\infty Q' s
  -  Q'(y) \int_{-\frac{z}{2}}^y Y s
  + \zeta Q'(y), \qquad y > -\frac{z}{2}
\end{equation}
for some $\zeta \in \R$. 
Using~\eqref{rep} in~\eqref{partorth} shows
\begin{equation} \label{zetaest1}
   e^{-z} \gec |\zeta| \int_{-\frac{z}{2}}^\infty (Q')^2
  - \left|\int_{-\frac{z}{2}}^\infty Y(y) Q'(y) \int_y^\infty 
  Q' s \right|
  - \left|\int_{-\frac{z}{2}}^\infty  (Q'(y))^2 \int_{-\frac{z}{2}}^y Y s \right|.
\end{equation}
Using~\eqref{asy},~\eqref{sbound},~\eqref{Y+} and~\eqref{Y-}, 
we have
\[
   \int_{-\frac{z}{2}}^\infty (Q')^2
   = \| Q' \|_2^2 - O(e^{-z}),
\]
\begin{equation} \label{term1}
\begin{split}
  \left| \int_y^\infty Q' s \right| &\lec \Ga
  e^{-z} \int_y^\infty (e^{-2|t|} + e^{-\frac{z}{2}} e^{-|t|} ) dt
  \\ &\lec \Ga e^{-z} \left\{  \begin{array}{cc} 
  e^{-2y} + e^{-\frac{z}{2}} e^{-y} & y \geq 0 \\
  1 & y \leq 0 \end{array} \right.,
\end{split}
\end{equation}
\[
\begin{split}
  \left| \int_{-\frac{z}{2}}^\infty Y(y) Q'(y) \int_y^\infty Q' s \right| &\lec \Ga e^{-z} \left(
  \int_{-\frac{z}{2}}^0  dy + \int_0^\infty 
  (e^{-2y} + e^{-\frac{z}{2}} e^{-y}) dy \right) 
  \\ & \lec \Ga ze^{-z},
\end{split}
\]
\begin{equation} \label{term2}
\begin{split}
  &\left| \int_{-\frac{z}{2}}^y Y s \right| \lec \Ga e^{-z} 
  \int_{-\frac{z}{2}}^y (1 + e^{-\frac{z}{2}} e^{|t|} ) dt
  \\ & \quad \lec \Ga e^{-z} \left\{ \begin{array}{cc} 
  y + \frac{z}{2} + 1 - e^{-\frac{z}{2}} e^{-y} 
  \lec z & -\frac{z}{2} \leq y \leq 0 \\
  \frac{z}{2} + 1 - e^{-\frac{z}{2}} 
  + y + e^{-\frac{z}{2}}(e^y-1) \lec z + y + e^{-\frac{z}{2}} e^y & y \geq 0
  \end{array} \right.,
\end{split}
\end{equation}
and
\[
\begin{split}
  \left|\int_{-\frac{z}{2}}^\infty  (Q'(y))^2 \int_{-\frac{z}{2}}^y Y s \right| & \lec \Ga e^{-z} \left( 
  z \int_{-\frac{z}{2}}^0 e^{2y} dy \right. \\
  & \quad \left. + \int_0^\infty e^{-2y}
  \left( z + y + e^{-\frac{z}{2}} e^y \right) dy \right)
  \lec \Ga z e^{-z},
\end{split}
\]
and inserting these estimates into~\eqref{zetaest1} shows that
\begin{equation} \label{zetaest2}
  |\zeta| \lec \Ga z e^{-z}.
\end{equation}
The second and third pointwise estimates of~\eqref{pointwise} then follow directly from~\eqref{asy}, \eqref{Y+}, \eqref{Y-}, \eqref{term1}, \eqref{term2} and~\eqref{zetaest2}.
Moreover, it follows from standard exponential decay estimates
for ode that for $|y| > \frac{z}{2}$, since
\[
  -g'' + (1 - \nu_{z}) g = p Q^{p-1} g  + \nu_z Q'(y), 
  \qquad |y| > \frac{z}{2},
\]
and $|Q^{p-1}(y)| \lec e^{-(p-1)|y|}$, $|Q'(y)| \lec e^{-|y|}$,
we have
\[
\begin{split}
  |g(y)| &\lec \| g\|_{H^1} \left\{
  \begin{array}{cc}
  e^{-\sqrt{1-\nu_{z}} (y-\frac{z}{2})} & y \geq \frac{z}{2} \\
  e^{\sqrt{1-\nu_{z}} (y+\frac{z}{2})} & y \leq -\frac{z}{2} \end{array} \right. \\
  &\lec \Ga^{\frac{1}{2}} e^{-\sqrt{1-\nu_{z}} |y|}, 
  \qquad |y| \geq \frac{z}{2},
\end{split}
\]
using~\eqref{gbound}, and
$e^{\sqrt{1-\nu_{z}} \frac{z}{2}} \leq e^{\frac{z}{2}}$,
which establishes the first estimate of~\eqref{pointwise}.

From the representation formula~\eqref{rep}, we can also
extract a formula relating $T_z\left(-\frac{z}{2}\right)$
and $\nu_z$ which will be used in showing both~\eqref{nutight}
and~\eqref{Ttight}: by~\eqref{zetaest2},
and the asymptotics~\eqref{asy}, \eqref{Y+} and~\eqref{Y-},
we see
\[
  2 c_p g\left(-\frac{z}{2}\right) = -e^{\frac{z}{2}} \; \nu_{z}
  \int_{-\frac{z}{2}}^\infty Q'(Q' + g)
  + O(z e^{-\frac{3}{2} z}),
\]
and since
\[
  \left| \int_{-\frac{z}{2}}^\infty Q' \; g \right|
  \lec \| g \|_{H^1} \lec e^{-\frac{z}{2}},
\]
\[
  2 c_p g\left(-\frac{z}{2}\right) = - \| Q'\|_2^2 \; e^{\frac{z}{2}} \; \nu_{z}
  + O(e^{-z}),
\]
so
\begin{equation} \label{Tatz}
\begin{split}
  T_{z}\left(-\frac{z}{2}\right) &= Q'\left(-\frac{z}{2}\right) 
  + g\left(-\frac{z}{2}\right) \\
  &= c_p e^{-\frac{z}{2}} - \frac{\| Q' \|_2^2}{2 c_p}  e^{\frac{z}{2}} \nu_{z} + O(e^{-z}).
\end{split}
\end{equation}

We turn next to the bounds~\eqref{nutight} and~\eqref{Ttight}.
First we improve the upper bound~\eqref{esest}, which came
from using (the projection orthogonal to the ground state 
$\phi_z$ of) $Q'$ in the quadratic form $q^+_z$,
by designing a better test function: let
\[
  R = R(z), \qquad 1 \leq R \leq \frac{z}{4}
\]
(to be chosen later), and set
\[
  \psi(x) = \left\{ \begin{array}{cc}
  Q'(x) & x \geq -\frac{z}{2} + R \\
  A e^{\be(x+\frac{z}{2})} + B  e^{-\be(x+\frac{z}{2})} &
  -\frac{z}{2} \leq x \leq -\frac{z}{2} + R \\
  (A + B)  e^{\be(x+\frac{z}{2})} & x \leq -\frac{z}{2}
  \end{array} \right\}, \qquad \be = \sqrt{1 - \nu_z} 
  = 1 - O(e^{-z})
\]
where
\[
  A = \frac{\Ga + 2\be}{D} Q'\left(-\frac{z}{2} + R\right), \;\;
  B = -\frac{\Ga}{D} Q'\left(-\frac{z}{2} + R\right),
\]
\[
  D = (\Ga + 2\be) e^{\be R} - \Ga e^{-\be R}
  = (\Ga + 2)e^R - \Ga e^{-R} + O(e^{-z}),
\]
so 
\[
  \psi\left(-\frac{z}{2}\right) = A + B = 
  \frac{2\be}{D} Q'\left(-\frac{z}{2} + R\right).
\]
By construction, $\psi$ is continuous, 
\begin{equation} \label{reg} 
  \psi \in H^1(\R) \cap H^2(\R \backslash \{-\frac{z}{2}, -\frac{z}{2}+R\}), 
\end{equation}  
and at $x=-\frac{z}{2}$ satisfies the 
jump condition of $\D^\Ga_{-\frac{z}{2}}$:
\begin{equation} \label{jump}
  \psi'\left(-\frac{z}{2}+\right) - \psi'\left(-\frac{z}{2}-\right) = \Ga \psi\left(-\frac{z}{2}\right),
\end{equation}
Compute
\begin{equation} \label{testnorm}
\begin{split} 
  \| \psi \|_2^2 &= \int_{-\frac{z}{2}+ R}^\infty (Q'(x))^2 dx
  + \int_{-\frac{z}{2}}^{-\frac{z}{2}+R}
  \left(A^2 e^{\be(2x+z)} + B^2 e^{-\be(2x+z)}
  + 2 AB \right) dx \\
  & \quad + (A+B)^2 \int_{-\infty}^{-\frac{z}{2}} 
  e^{\be(2x+z)} dx = \| Q' \|_2^2 + O(e^{2R-z}).
\end{split}
\end{equation}
Using~\eqref{reg},~\eqref{jump} and
\[
  L^+ Q' = 0, \quad
  L^+ e^{\pm \be(x+\frac{z}{2})} = \left( \nu_z - p Q^{p-1}(x) \right) e^{\pm \be(x+\frac{z}{2})},
\]
we find that
\begin{equation} \label{testquad}
\begin{split}
  q_z^+(\psi,\psi) &= \int_{-\infty}^{\infty}
  \psi(x) L^+ \psi(x) dx + \mu \psi(-\frac{z}{2}+R) \\
  &= \int_{-\infty}^{-\frac{z}{2}} \left( \nu_z - p Q^{p-1}(x) \right) (A + B)^2 e^{\be(2x+z)} dx \\
  & \quad +  \int_{-\frac{z}{2}}^{-\frac{z}{2}+R} \left( \nu_z - p Q^{p-1}(x) \right) \left[A e^{\be(x+\frac{z}{2})} + B  e^{-\be(x+\frac{z}{2})} \right]^2 dx 
  + \mu Q'\left(-\frac{z}{2}+R\right) \\
   &= \mu  Q'\left(-\frac{z}{2}+R\right) + 
   O(e^{2R-2z} + e^{\frac{p+1}{2}(2R-z)}),
\end{split}
\end{equation}
where
\[
  \mu = \psi'\left(-\frac{z}{2} + R - \right) - \psi'\left(-\frac{z}{2} + R + \right)
  = \be (A  e^{\be R} - B e^{-\be R}) - Q''\left(-\frac{z}{2}+R\right).
\]
Estimate, using~\eqref{ode} and~\eqref{relation},
\[
\begin{split}
  \mu &= \frac{\be Q'(-\frac{z}{2} + R)}{D} 
  \left( (\Ga + 2 \be)e^{\be R} + \Ga e^{-\be R} \right)
  - Q''(-\frac{z}{2} + R) \\
  &= \frac{Q'(-\frac{z}{2} + R)}{D} 
   \left( \be \left( (\Ga + 2 \be)e^{\be R} + \Ga e^{-\be R} \right) 
 - D \frac{Q}{Q'}(-\frac{z}{2} + R) \right) + O(e^{\frac{p}{2}(2R- z)}) \\
 &=  \frac{Q'(-\frac{z}{2} + R)}{D} 
   \left( (\be+1) \Ga e^{-\be R} \right) 
   + O(e^{\frac{p}{2}(2R-z)} + e^{\frac{1}{2}(2R-3z)}) \\
  &= \frac{c_p(\be+1) \Ga}{D} e^{-\frac{z}{2}} e^{(1 - \be)R} +
   O(e^{\frac{p}{2}(2R-z)} + e^{\frac{1}{2}(2R-3z)}) \\
  &= \frac{2\Ga c_p}{(\Ga+2)e^R - \Ga e^{-R}} e^{-\frac{z}{2}}  + 
   O(e^{\frac{p}{2}(2R-z)} + e^{\frac{1}{2}(2R-3z)}).
\end{split}
\]
So from~\eqref{testquad},
\begin{equation} \label{testquad2}
  q_z^+(\psi, \psi) =
  \frac{2\Ga c_p^2}{\Ga+2 -\Ga e^{-2R}} e^{-z}  
  + O(e^{\frac{p+1}{2}(2R-z)} + e^{2(R-z)}).
\end{equation}
To use the quadratic form to get an estimate for $\nu_z$,
we need the test function to be orthogonal to the ground state
$\phi_z$. So project:
\[
  \tilde{\psi} = P_{\phi_z^\perp} \psi = 
  \psi - (\phi_z, \psi) \phi_z.
\]
From~\eqref{gssplit} and~\eqref{gsef}, we have
\[
  |(\phi_z,Q')| = \| Q' \|_2 |b| \lec e^{-\frac{p+1}{4}z},
\]
and
\[
  \int_{-\infty}^{-\frac{z}{2}+R} \phi_z^2
  \lec  e^{\frac{p+1}{2}(2R-z)},
\]
and so by H\"older,
\[
\begin{split}
  \left| \int_{-\frac{z}{2}+R}^\infty \phi_z \; \psi \right|
  &\lec \left| \int_{-\infty}^{-\frac{z}{2}+R} Q' \; \phi_z \right| +  |(\phi_z,Q')| \\
  &\lec e^{R-\frac{z}{2}} e^{\frac{p+1}{4} (2R-z)}
  +  e^{-\frac{p+1}{4} z} = 
  e^{\frac{p+3}{4}(2R-z)} + e^{-\frac{p+1}{4} z},
\end{split}
\]
and moreover
\[
  \left| \int_{-\infty}^{-\frac{z}{2}+R} 
  \phi_z \; \psi \right| \lec  e^{-\frac{p+1}{4} z}.
\]
Combining the above yields
\begin{equation} \label{innersmall}
  |( \phi_z, \psi)| \lec  
   e^{\frac{p+3}{4}(2R-z)} + e^{-\frac{p+1}{4} z}.
\end{equation}
So from~\eqref{testnorm},
\begin{equation} \label{testnorm2}
  \| \tilde \psi \|_2^2 = \| \psi \|_2^2 - (\phi_z,\psi)^2
  = \| Q' \|_2^2 - O(e^{2R-z}).
\end{equation}
Now since $\phi_z \in \D_{-\frac{z}{2}}^\Ga$, and using~\eqref{testquad2} and~\eqref{innersmall},
\begin{equation} \label{testquad3}
\begin{split}
  q_z^+(\tilde\psi,\tilde\psi) &= 
  q_z^+(\psi - (\phi_z, \psi) \phi_z, \psi - (\phi_z, \psi) \phi_z)
  \\ &= q_z^+(\psi,\psi) - 2(\phi_z,\psi) q_z^+(\phi_z,\psi)
  + (\phi_z,\psi)^2 q_z^+(\phi_z,\phi_z) \\
  &= q_z^+(\psi,\psi) - 2(\phi_z,\psi) (L^+ \phi_z,\psi)
  + (\phi_z,\psi)^2 (L^+ \phi_z,\phi_z) \\
  &=  q_z^+(\psi,\psi) + e_0 (\phi_z, \psi)^2 \\
  &= \frac{2\Ga c_p^2}{\Ga+2 -\Ga e^{-2R}} e^{-z}  
  + O(e^{\frac{p+1}{2}(2R-z)} + e^{2(R-z)}).
\end{split}
\end{equation}
Using~\eqref{testquad3} and~\eqref{testnorm2} in the
variational principle yields
\[
  \| Q' \|_2^2 \nu_z \leq 
  \frac{\| Q' \|_2^2}{\| \tilde \psi \|_2^2} 
  q_z^+ (\tilde \psi, \tilde \psi)
  = \frac{2\Ga c_p^2}{\Ga+2 - \Ga e^{-2R} } e^{-z}  
  + O(e^{\frac{p+1}{2}(2R-z)} + e^{2(R-z)}).
\]
So
\[
  \nu_z - \frac{2}{\Ga+2}
  \frac{\Ga c_p^2}{\| Q' \|_2^2} e^{-z} \lec
  e^{-(z+2R)} + e^{\frac{p+1}{2}(2R-z)} + e^{2(R-z)}.
\]
To conclude, choose
\[
  R = \left\{ \begin{array}{cc} 
  \frac{p-1}{2(p+3)} z & 2 < p < 5 \\
  \frac{1}{4} z & p > 5 \end{array} \right.
\]
to find
\begin{equation} \label{improved}
  \nu_z \leq \frac{2}{\Ga+2}
  \frac{\Ga c_p^2}{\| Q' \|_2^2} e^{-z}  
  + O( e^{-(1+a_p) z} ), \quad
  a_p = \min \left( \frac{p-1}{p+3},\frac{1}{2} \right).
\end{equation}

Defining the quantities
\[
  \tau_z = \frac{\| Q' \|_2^2 \nu_z}{2 c_p^2 e^{-z}},
  \qquad \rho_z = \frac{T_z(-\frac{z}{2})}{c_p e^{-\frac{z}{2}}},
\]
the upper bound~\eqref{improved} reads
\begin{equation} \label{improved2}
  \tau_z \leq \frac{\Ga}{\Ga+2}
  + O(e^{-a_p z}),
\end{equation}
the relation~\eqref{Tatz} reads
\begin{equation} \label{Tatz2}
  \rho_z = 1 - \tau_z + O(e^{-\frac{z}{2}}),
\end{equation}
while the bound~\eqref{Tupper} reads
\[
  \frac{\Ga}{2} \rho_z^2 \leq \tau_z + 
   O(e^{-\frac{p-1}{2} z}).
\]
Combining these last two gives
\[
  (\tau_z-1)^2 - \frac{2}{\Ga} \tau_z \lec 
  e^{-\frac{z}{2}}.
\]
The quadratic in $\tau_z$ on the left has roots
\[
  1 + \frac{1}{\Ga} \pm \left(\frac{2}{\Ga} + \frac{1}{\Ga^2}  \right)^{\frac{1}{2}},
\]
and so we obtain a lower bound
\[
  \tau_z \geq 1 + \frac{1}{\Ga} - 
  \left(\frac{2}{\Ga} + \frac{1}{\Ga^2}  \right)^{\frac{1}{2}}
  - O( e^{-\frac{z}{2}}).
\]
Combined with~\eqref{improved2}, this gives~\eqref{nutight}.
Inserting~\eqref{nutight} into~\eqref{Tatz2} yields~\eqref{Ttight}.
\begin{remark}
For small $\Ga$, 
\[
  1 + \frac{1}{\Ga} - 
  \left( \frac{2}{\Ga} + \frac{1}{\Ga^2}  \right)^{\frac{1}{2}}
  = \frac{\Ga}{2} - \frac{\Ga^2}{2} + O(\Ga^3), 
\]
while
\[
  \frac{\Ga}{\Ga+2} = \frac{\Ga}{2} - \frac{\Ga^2}{4} + O(\Ga^3),
\]
so the bounds~\eqref{nutight} are nearly sharp.
\end{remark}

Finally, we turn to the $z$-derivative estimates~\eqref{evz} and~\eqref{efz}.
To handle the $z$ dependence in the delta potential,
it is convenient to translate. Set
\[
  \Tt_{z}(y) = T_{z} \left( y-\frac{z}{2} \right),
\]
which satisfies the (translated) eigenvalue equation
\begin{equation} \label{eigtrans}
  \left( -\p_y^2 + 1 - p Q^{p-1}(\cdot-\frac{z}{2}) + \Ga \de - \nu_{z} \right) \Tt_{z} = 0. 
\end{equation}
Differentiating in $z$ gives
\[
\begin{split}
  & \left( -\p_y^2 + 1 - p Q^{p-1}(\cdot-\frac{z}{2}) + \Ga \de - \nu_{z} \right) \p_z \Tt_{z} \\ & 
  \qquad \qquad =  
  \left( -\frac{p}{2}(Q^{p-2} Q')(\cdot-\frac{z}{2}) + \p_z \nu_{z} \right) \Tt_{z}.
\end{split}
\]
Taking inner-product with $\Tt_{z}$ and using~\eqref{eigtrans} gives
\begin{equation} \label{eigtrans2}
  0 = -\frac{p}{2} \int (Q^{p-2}Q')(y-\frac{z}{2})
  \Tt_{z}^2(y) dy + \p_z \nu_{z} \| \Tt_{z} \|_2^2,
\end{equation}
so, since $Q^{p-2}(Q')^3$ is odd, recalling 
$g(y) = T_{z}(y) - Q'(y)$, and using~\eqref{pointwise},
\[
\begin{split}
  \| Q' \|_2^2 &\; \p_z \nu_{z} =
  \frac{p}{2} \int Q^{p-2} Q' T^2_{z} =
  \frac{p}{2} \int Q^{p-2} Q' \left( T^2_{z} - (Q')^2 \right) 
  \\ & \lec \int Q^{p-2} |Q'| 
  \left| T_{z} + Q' \right| |g| \\& \lec  \sqrt{\Ga} \left( 
  \int_{|y| \geq \frac{z}{2}} e^{-(p-1)|y|}(e^{-\frac{z}{2}} + e^{-|y|}) e^{-\frac{z}{2}} dy +
  \int_{-\frac{z}{2}}^0 e^{-z} (e^{-\frac{z}{2}} + e^{y}) e^{(p-2)y} dy \right. \\
  & \left. \quad +
  \int_0^{\frac{z}{2}} e^{-z} (e^{-\frac{z}{2}} + e^{-y})  e^{-(p-1)y} \left((y+z)e^{-y} + e^{-\frac{z}{2}} \right) dy \right) \lec \sqrt{\Ga} e^{-z},
\end{split}
\]
establishing~\eqref{evz}. To show~\eqref{efz}, we consider the translated version of the equation for
\[
  \gt := g(\cdot-\frac{z}{2}) = \Tt_{z} - Q'(\cdot - \frac{z}{2}):
\]
\[
  \left( -\p_y^2 + 1 - p Q^{p-1}(\cdot-\frac{z}{2}) + \Ga \de - \nu_{z} \right) \gt
  = \left(-\Ga \delta + \nu_{z} \right) Q'(\cdot-\frac{z}{2}).
\]
Differentiating here with respect to $z$ produces
\begin{equation} \label{dzgtilde}
\begin{split}
  &\left( -\p_y^2 + 1 - p Q^{p-1}(\cdot-\frac{z}{2}) + \Ga \de - \nu_{z} \right)
  \p_z \gt = \frac{1}{2} \left(\Ga\de - \nu_{z} \right) Q''(\cdot-\frac{z}{2}) \\ & \qquad \qquad + (\p_z \nu_{z}) \left( Q'(\cdot - \frac{z}{2}) + \gt
  \right)  -\frac{p}{2}(Q^{p-2}Q')(\cdot-\frac{z}{2}) \gt,
\end{split}
\end{equation}
and then translating back again, in terms of the function
\begin{equation} \label{ghat}
  \hat g := (\p_z \gt)(\cdot + \frac{z}{2}) = 
  \p_z g - \frac{1}{2} \p_y g = \p_z T_{z} - \frac{1}{2} \p_y g, 
\end{equation}
\[
  \left( L^+ + \Ga \de_{-\frac{z}{2}} - \nu_{z} \right) \hat g = 
  \frac{1}{2} \left(\Ga \delta_{-\frac{z}{2}} - \nu_{z} \right) Q'' 
  + (\p_z \nu_{z}) \left( Q' + g \right)  
  -\frac{p}{2} Q^{p-2} Q' g.
\]
Taking inner product with $\hat g$ yields
\[
\begin{split}
  &( \hat g, \; L^+ \hat g) + \Ga |\hat g(-z/2)|^2 =
  \frac{1}{2} \Ga \hat g(-z/2) Q''(-z/2) - \frac{1}{2} \nu_{z} (\hat g, Q'') \\
  & \qquad \qquad + (\p_z \nu_{z}) (\hat g, Q' + g) - \frac{p}{2}(\hat g, Q^{p-2} Q' g) + \nu_z \| \hat g \|_2^2 \\
  &\qquad \lec \Ga e^{-\frac{z}{2}} |\hat g(-z/2)| + \left( |\nu_{z}|
  + |\p_z \nu_{z}|(1 + \|g\|_{L^2}) + 
  \|Q^{p-2} Q' g\|_{L^2}  \right) \| \hat g \|_{L^2} \\
  &\qquad \lec \Ga e^{-\frac{z}{2}} |\hat g(-z/2)| + \sqrt{\Ga} e^{-\frac{z}{2}} \| \hat g \|_{L^2}
\end{split}
\]
using~\eqref{esest},~\eqref{evz} and~\eqref{esefest}.
Decomposing orthogonally,
\[
  \hat g = (\phi_0, \hat g) \phi_0 + \frac{(Q', \hat g)}{\| Q' \|_2^2} Q' + \hat g^{\perp}, \qquad \hat g^\perp \perp \phi_0, Q',
\]
and using~\eqref{es},
\[
  \| \hat g^\perp \|_{H^1}^2  +  \Ga |\hat g(-z/2)|^2
  \lec |(\phi_0,\hat g)|^2 + \Ga e^{-\frac{z}{2}} |\hat g(-z/2)| + \sqrt{\Ga} e^{-\frac{z}{2}} \| \hat g \|_{L^2}.
\]
The $\hat g(-z/2)$ term on the right can be absorbed into its counterpart on the left by Young's inequality, leaving
\[
 \| \hat g^\perp \|_{H^1}^2 +  \Ga |\hat g(-z/2)|^2
  \lec |(\phi_0,\hat g)|^2 + \sqrt{\Ga} e^{-\frac{z}{2}} \| \hat g \|_{L^2} + \Ga e^{-z},
\]
and so
\[
 \| \hat g \|_{H^1}^2 +  \Ga |\hat g(-z/2)|^2
  \lec |(\phi_0,\hat g)|^2 + |(Q', \hat g)|^2
  + \sqrt{\Ga} e^{-\frac{z}{2}} \| \hat g \|_{L^2} + \Ga e^{-z}.
\]
Then by Young's inequality again,
\begin{equation} \label{ghatbound}
   \| \hat g \|_{H^1}^2 +  \Ga |\hat g(-z/2)|^2
  \lec |(\phi_0,\hat g)|^2 + |(Q', \hat g)|^2 + \Ga e^{-z},
\end{equation}
and it remains to estimate the two inner products on the right.
By~\eqref{esefest},
\begin{equation} \label{gprime}
  |(\phi_0, g')| + |(\p_y Q, g')| \lec \| g'\|_2 \lec \sqrt{\Ga} e^{-\frac{z}{2}}.
\end{equation}
By the normalization~\eqref{normal},
\[
  0 = \p_z \| T_{z} \|_2^2 = 2( T_{z}, \p_z T_{z} )
  = 2 (Q' + g, \p_z T_{z} ),
\]
so
\[
  |( Q',  \p_z T_{z} )| = |(g,  \p_z T_{z} )| =
  |(g,  \hat g - \frac{1}{2} g' )| 
  = |(g, \hat g)| \lec \sqrt{\Ga} e^{-\frac{z}{2}} 
  \| \hat g \|_2 ,
\]
which combined with~\eqref{gprime} gives
\begin{equation} \label{Qprimeorth}
  |( Q', \hat g)| \lec \sqrt{\Ga} e^{-\frac{z}{2}} (1 + \| \hat g \|_2).
\end{equation}
Using the eigenvalue equation~\eqref{eigen},
\[
  \nu_{z} (\phi_0, T_{z} ) = (\phi_0, (L^+ + \Ga \de_{-\frac{z}{2}}) T_{z} ) = -e_0 (\phi_0, T_{z}) + \Ga\phi_0(-z/2) T_{z}(-z/2). 
\]
We differentiate this expression with respect to $z$, observing that
\[
  T_z(-z/2) = Q'(-z/2) + \gt(0) \; \implies \;
  \frac{d}{dz} T_z(-z/2) = -\frac{1}{2} Q''(-z/2) + \hat g(-z/2),
\]
to obtain
\[
\begin{split}
  (e_0 + \nu_{z})(\phi_0, \p_z T_{z}) &= -(\p_z \nu_{z})(\phi_0,T_{z}) \\
  &\quad - \frac{\Ga}{2} \phi_0'(-z/2) T_{z}(-z/2) \\
  &\quad + \Ga \phi_0(-z/2)
  (-\frac{1}{2} Q''(-z/2) + \hat g(-z/2)).
\end{split}
\]
Since $e_0 + \nu_{z} = e_0 + O(e^{-z})$,
\[
  |(\phi_0, \p_z T_{z})| \lec \sqrt{\Ga} e^{-z} + 
  \Ga e^{-\frac{p-1}{2}z}[e^{-\frac{z}{2}} + |\hat g(-z/2)|] 
\]
using~\eqref{asy}, \eqref{esefest}, \eqref{evz} and~\eqref{gsexp}.
Combining this with~\eqref{gprime} yields
\begin{equation} \label{phiorth}
  |( \phi_0, \hat g)| \lec 
  \sqrt{\Ga} e^{-\frac{z}{2}} +
  \Ga e^{-\frac{p-1}{2}z}[e^{-\frac{z}{2}} + |\hat g(-z/2)|].
\end{equation}
Inserting~\eqref{Qprimeorth} and~\eqref{phiorth} into~\eqref{ghatbound} gives
\[
  \| \hat g \|_{H^1}^2 +  \Ga |\hat g(-z/2)|^2
  \lec  \Ga^2 e^{-(p-1) z} [e^{-z} + |\hat g(-z/2)|^2] + 
  \Ga e^{-z}(1 + \|\hat g\|_2^2),
\]
from which follows
\[
  \| \hat g \|_{H^1} \lec \sqrt{\Ga} e^{-\frac{z}{2}}.
\]
Finally, from~\eqref{ghat} and~\eqref{esefest},
\[
  \| \p_z T_{z} \|_2 \lec \| \hat g \|_2 + \| g' \|_2
  \lec \sqrt{\Ga} e^{-\frac{z}{2}},
\]
establishing~\eqref{efz}.
\end{proof}

\section{Proof of the main force law estimate} 
\label{motionproof}

Here we prove Proposition~\ref{Einnerest2}.
\begin{proof}
Referring to~\eqref{error3} and~\eqref{error}, 
we begin with the contributions
\begin{equation} \label{chiE}
\begin{split}
  \left\lan \chi \cE_{P^0}^{\Ga=0}, \; \left( e^{i \frac{v}{2}(\cdot)}  T_{z} \right) 
  \left(\cdot- \frac{z}{2} \right) \right\ran &=
  \left\lan \chi G , \; 
  \left( e^{i \frac{v}{2}(\cdot)}  T_{z} \right) 
  \left(\cdot- \frac{z}{2} \right) \right\ran \\
  & - \vec{m} \cdot \left\lan \chi \left( \vec{M} Q \right) \left( \cdot - \frac{z}{2} \right),
   \; T_{z} \left(\cdot- \frac{z}{2} \right) \right\ran \\
  &+  \Om\vec{m} \cdot \left \lan \chi
  \left( e^{-i \frac{v}{2}(\cdot)} \vec{M} Q \right)
  \left( \cdot + \frac{z}{2} \right), \; 
  \left( e^{i \frac{v}{2}(\cdot)}  T_{z} \right) 
  \left(\cdot- \frac{z}{2} \right) \right\ran.
\end{split}  
\end{equation}
For the first term in~\eqref{chiE}, we ~\eqref{interform},\eqref{intersize},\eqref{esefest}
and~\eqref{asy} to get
\[
\begin{split}
  &\left| \left\lan \chi G , \; 
  \left( e^{i \frac{v}{2}(\cdot)}  T_{z} \right) 
  \left(\cdot- \frac{z}{2} \right) \right\ran - H(z) \right|
  \lec \left| \left\lan G , \; 
  \left( e^{i \frac{v}{2}(\cdot)}  Q' \right) 
  \left(\cdot- \frac{z}{2} \right) \right\ran - H(z) \right| \\
  & \qquad \quad + 
  \left| \left\lan (1-\chi) G , \; 
  \left( e^{i \frac{v}{2}(\cdot)}  Q' \right) 
  \left(\cdot- \frac{z}{2} \right) \right\ran \right|
  + \left| \left\lan \chi G , \; 
   e^{i \frac{v}{2}(\cdot)} \left( T_{z} - Q' \right) 
  \left(\cdot- \frac{z}{2} \right) \right\ran \right| \\
  & \qquad \lec e^{-z} \left(v^2 z^2 + e^{-\frac{z}{2}} \right) + \| G \|_\infty \|(1-\chi) Q'(\cdot-\frac{z}{2}) \|_1 + \| G \|_{\infty} \| T_z - Q' \|_1 \\
   & \qquad \lec e^{-z} \left(v^2 z^2 + e^{-\frac{z}{2}} \right).
\end{split}
\]
For the remaining terms in~\eqref{chiE} we use the following
inner-product estimates, which include replacements for the exact expressions
\begin{equation} \label{inners}
  \lan i \La Q, Q' \ran = \lan i Q', Q' \ran = \lan Q, Q' \ran = 0,
  \quad \lan -yQ, Q' \ran = M(Q),
\end{equation}
when $Q'$ is replaced with $T_{z}$:
\begin{lemma}
We have the following:
\begin{equation} \label{inners2}
\begin{split}
  &\lan i \La Q, T_{z} \ran = \lan i Q', T_{z} \ran = 0, \\ 
  &|\lan Q, T_{z} \ran| + |\lan -yQ, T_{z} \ran - M(Q)| 
   \lec z^2 e^{-z}, \\
  &\left|  \left\lan  \chi \left(e^{-i \frac{v}{2}(\cdot)} \vec{M} Q \right)(\cdot + z), \;
  e^{i \frac{v}{2}(\cdot)} T_{z}(\cdot) \right\ran \right| \lec z^2 e^{-z}.
\end{split}
\end{equation}
\end{lemma}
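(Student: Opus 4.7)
The first two identities in~\eqref{inners2} are immediate: since $T_z$, $\La Q$, and $Q'$ are all real-valued (Proposition~\ref{efprop}, in particular the normalization~\eqref{normal}), and since $\lan i f, g \ran = -\im \int \bar f g$ vanishes when both $f$ and $g$ are real, both inner products are zero.

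For the estimates $|\lan Q, T_z \ran| \lec z^2 e^{-z}$ and $|\lan -yQ, T_z \ran - M(Q)| \lec z^2 e^{-z}$, I plan to decompose $T_z = Q' + g$ with $g := T_z - Q'$ and invoke the exact orthogonality relations $\lan Q, Q' \ran = 0$ and $\lan -yQ, Q' \ran = M(Q)$ from~\eqref{inners} to reduce each quantity to $\lan Q, g \ran$ or $\lan -yQ, g \ran$. These are then estimated using the three-region pointwise bound~\eqref{pointwise} on $g$ together with $|Q(y)|, |yQ(y)| \lec (1+|y|) e^{-|y|}$ from~\eqref{asy}. The worst region is $-z/2 \leq y \leq 0$, where $|g(y)| \lec e^{-z} e^{-y}$ and $|yQ(y)| \lec |y| e^{y}$, so that $\int_{-z/2}^{0} |y|\, e^{-z}\, dy \lec z^2 e^{-z}$; the outer region $|y| \geq z/2$ contributes only $O(e^{-z})$ via $|g(y)| \lec e^{-|y|}$, and $y \geq 0$ yields at most $O(z e^{-z})$.

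For the third estimate, the guiding principle is that $(\vec M Q)(\cdot + z)$ is exponentially localized near $y = -z$ while $T_z(y)$ is localized near $y = 0$, so they overlap only in the tails where both are small. The plan is to split the integral at $y = -z/2$. On $y \leq -z/2$, the first case of~\eqref{pointwise} (together with the decay of $Q'$) gives $|T_z(y)| \lec e^{y}$, and the substitution $t = y + z$ turns the integrand into $(1+|t|) e^{-|t|} e^{t-z}$ on $t \leq z/2$, whose leading contribution comes from $t \in [0, z/2]$ and gives $O(z^2 e^{-z})$. On $y \geq -z/2$, one has $|(\vec M Q)(y+z)| \lec (1+|y+z|) e^{-(y+z)}$, and splitting further at $y = 0$, the region $-z/2 \leq y \leq 0$ again dominates, producing $e^{-z} \int_{-z/2}^{0} (1 + y + z)\, dy \lec z^2 e^{-z}$ after using $|T_z(y)| \lec e^{y}$ from~\eqref{pointwise}; the region $y \geq 0$ is of order $z e^{-z}$. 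The oscillating factor $e^{iv y + ivz/2}$ has modulus one and plays no role in absolute-value bounds, and the cut-off $\chi$ merely removes a bounded neighbourhood of the origin where both factors are already controlled.

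No step is genuinely subtle; the only mild obstacle will be carefully bookkeeping the polynomial prefactors $(1+|y+z|)$ and $|y|$ appearing in the components $\La Q$, $yQ$ of $\vec M Q$, and verifying that in the dominant intermediate region $-z/2 \leq y \leq 0$ these prefactors contribute exactly a factor $z^2$ rather than something larger.
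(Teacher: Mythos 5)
Your argument is correct and follows essentially the same route as the paper: real-valuedness of $T_z$ for the first pair, the decomposition $T_z = Q' + g$ together with the exact relations~\eqref{inners} for the second pair, and region-by-region use of~\eqref{pointwise} for the third. The only genuine stylistic difference is that for the third estimate you bound $|T_z|$ directly, whereas the paper separates the contributions of $Q'$ (computed exactly via~\eqref{asy}) and of $g=T_z-Q'$ (estimated via~\eqref{pointwise}); both work and give $z^2 e^{-z}$.

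One small imprecision worth fixing: on $|y| \geq z/2$, the first line of~\eqref{pointwise} gives $|T_z(y)-Q'(y)| \lec e^{-\sqrt{1-\nu_z}\,|y|}$, so your stated bound $|T_z(y)| \lec e^y$ for $y\leq -z/2$ (and the analogous $|g(y)|\lec e^{-|y|}$) is not literally true for large $|y|$; the correct exponent is $\sqrt{1-\nu_z}<1$. This costs nothing because $\nu_z = O(e^{-z})$, so $e^{-\sqrt{1-\nu_z}\,\frac{z}{2}} = e^{-\frac{z}{2}}\,e^{O(ze^{-z})} \lec e^{-\frac{z}{2}}$ — the observation the paper makes explicitly at the end of the proof — and the resulting integrals still come out $\lec z^2 e^{-z}$. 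You should insert this remark to justify discarding the $\sqrt{1-\nu_z}$.
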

\begin{proof}
The first two relations follow immediately from the 
real-valuedness of $T_z$.
In light of~\eqref{asy} and~\eqref{inners}, to bound the 
third and fourth expressions, it suffices to show that
\[
  \int_{-\infty}^\infty (1 + |y|) e^{-|y|} | T_{z}(y) - Q'(y) | dy
  \lec z^2 e^{-z},
\]
which follows directly from~\eqref{pointwise}:
\[
  \int_{0}^{\infty} (1 + |y|) e^{-|y|} | T_{z}(y) - Q'(y) | dy
  \lec  \int_0^\infty (1 + y) e^{-y}   e^{-z} \left( (y+z) e^{-y} + e^{-\frac{z}{2}} \right) dy
  \lec z e^{-z},
\]
\[
\begin{split}
  \int_{-\frac{z}{2}}^{0} & (1 + |y|) e^{-|y|} | T_{z}(y) - Q'(y) | dy
  \lec  \int_{-\frac{z}{2}}^0 (1 - y) e^{y}  e^{-z} e^{-y}  dy \\
  &= e^{-z}  \int_{-\frac{z}{2}}^0 (1-y) dy \lec z^2 e^{-z},
\end{split}
\]
and
\[
\begin{split}
  \int_{-\infty}^{-\frac{z}{2}} (1 + |y|) e^{-|y|} & | T_{z}(y) - Q'(y) | dy
  \lec  \int_{-\infty}^{-\frac{z}{2}} (-y) e^{y}  e^{\sqrt{1-\nu_{z}} y} dy \\
  &\lec z e^{-\frac{z}{2}(1 + \sqrt{1-\nu_{z}})} \lec z e^{-z},
\end{split}
\]
since by~\eqref{nutight}, 
\[
  e^{-\frac{z}{2}\sqrt{1-\nu_{z}}}
  = e^{-\frac{z}{2}(1 + O(e^{-z}))} = e^{-\frac{z}{2}} e^{O(z e^{-z})}
  \lec  e^{-\frac{z}{2}}.
\]
To bound the final quantity of~\eqref{inners2}, we combine the direct computation
\[
   \left|  \left\lan  \chi \left(e^{-i\frac{v}{2}(\cdot)} \vec{M} Q \right)(\cdot + z), \;
  e^{i \frac{v}{2}(\cdot)} Q'(\cdot) \right\ran \right| \lec 
  \int_{-\infty}^\infty (1 + |y+z|) e^{-|y+z|} e^{-|y|} dy \lec z^2 e^{-z}
\]
with the following consequence of~\eqref{pointwise}:
\[
\begin{split}
 &\left|  \left\lan  \left(e^{-i \frac{v}{2} (\cdot)} \vec{M} Q \right)(\cdot + z), \;
  e^{i \frac{v}{2}(\cdot)} (T_{z} - Q')(\cdot) \right\ran \right| \\ & \qquad \lec  
  \int_{-\infty}^\infty (1 + |y+z|) e^{-|y+z|}   | T_{z}(y) - Q'(y) | dy
   \\ & \qquad \lec  
  \int_{-\infty}^{-z} (1-y-z) e^{y+z} e^{\sqrt{1-\nu_{z}} y }  dy
   \\  & \qquad \quad +  
  \int_{-z}^{-\frac{z}{2}} (1 + y+z) e^{-y-z} e^{\sqrt{1-\nu_{z}} y } dy
  \\ & \qquad \quad +  
  \int_{-\frac{z}{2}}^0 (1 + y+z) e^{-y-z} e^{-z} e^{-y} dy
   \\ & \qquad \quad +  
  \int_{0}^\infty (1 + y+z) e^{-y-z}  e^{-z} \left( (y+z) e^{-y} + e^{-\frac{z}{2}} \right)   dy
  \\ & \qquad \lec z^2 e^{-z}
\end{split}
\]
by direct computation.
\end{proof}
Returning to the second term of~\eqref{chiE}, we have, from~\eqref{inners2},
\[
   \left| \left\lan \vec{M} Q, \; T_{z} \right \ran 
   + M(Q) \hat e_4 \right| \lec z^2 e^{-z},
\]
and since
\[
\begin{split}
   &\left| \left\lan (1 - \chi) \left( \vec{M} Q \right) \left(\cdot - \frac{z}{2} \right),
   T_z\left(\cdot- \frac{z}{2} \right) \right \rangle \right| \\
   & \quad \lec  \left| \left \langle (1 - \chi) \left( \vec{M} Q \right) \left(\cdot - \frac{z}{2} \right),
   Q'\left(\cdot- \frac{z}{2} \right) \right \rangle \right| + 
   \left\|  (1 - \chi) \left( \vec{M} Q \right) \left(\cdot - \frac{z}{2} \right)
   \right\|_2 \| T_z - Q' \|_2
   \\ & \quad \lec \int_{-\frac{z}{2}-2}^{-\frac{z}{2}+2} (1+|y|) e^{-2|y|} dy 
   + \left( \int_{-\frac{z}{2}-2}^{-\frac{z}{2}+2} (1+|y|) e^{-|y|} dy \right)^{\frac{1}{2}} e^{-\frac{z}{2}}
   \\ & \quad \lec z e^{-z}
\end{split}
\]
by~\eqref{esefest} and~\eqref{asy},
we have
\[
  \left| \mv \cdot \left\lan \chi \left( \vec{M} Q \right) \left( \cdot - \frac{z}{2} \right),
   \; T_{z} \left(\cdot- \frac{z}{2} \right) \right\ran 
   + \frac{M(Q)}{2} \left( \dot v - \frac{\dot \la}{\la} v \right) \right|
   \lec |\mv| z^2 e^{-z}.
\]
The last term of~\eqref{chiE} is estimated directly
using the last estimate of~\eqref{inners2}:
\[
\begin{split}
  &\left| \Om\vec{m} \cdot  \left \lan \chi
  \left( e^{-i \frac{v}{2}(\cdot) } \vec{M} Q \right)
  \left(\cdot + \frac{z}{2} \right), \; 
  \left( e^{i \frac{v}{2}(\cdot)}  T_{z} \right) 
  \left(\cdot- \frac{z}{2} \right) \right\ran \right| \\
  & \qquad \lec |\mv|
  \left|  \left\lan  \chi \left(e^{-i \frac{v}{2}(\cdot)} \vec{M} Q \right)(\cdot + z), \;
  e^{i \frac{v}{2}(\cdot)} T_{z}(\cdot) \right\ran \right| 
  \lec |\mv| z^2 e^{-z}.
\end{split}
\]
Collecting the last few estimates into~\eqref{chiE} produces
\begin{equation} \label{chiEcont}
\begin{split}
  &\left| \left\lan \chi \cE_{P^0}^{\Ga=0}, \; \left( e^{i \frac{v}{2}(\cdot)}  T_{z} \right) 
  \left(\cdot- \frac{z}{2} \right) \right\ran - \left[ H(z) 
  + \frac{M(Q)}{2} \left( \dot v - \frac{\dot \la}{\la} v \right) \right]
  \right| \\ & \qquad \lec e^{-z} \left(v^2 z^2 + e^{-\frac{z}{2}} 
  + |\mv| z^2 \right).
\end{split}
\end{equation}
Continuing the proof of Proposition~\eqref{Einnerest2}, 
the next terms to consider are
\begin{equation} \label{chipcont}
\begin{split}
  &\left| \left \langle \chi \left(\chi^{p-1} - 1 \right) |P^0|^{p-1} P^0,   \; 
  \left( e^{i \frac{v}{2}(\cdot)} T_z \right) \left(\cdot - \frac{z}{2} \right) \right \rangle \right| \\
  & \quad \lec \int_{-2}^2 |P^0(y;z,v)|^p dy 
  \sup_{z \in [-2,2]} | T_z |
  \lec e^{-p \frac{z}{2}} e^{-\frac{z}{2}} = e^{-z} e^{-\frac{p-1}{2} z}
\end{split}
\end{equation}
by~\eqref{asy} and~\eqref{pointwise}, and
\begin{equation} \label{chidchicont}
  \begin{split}
  &\left| \frac{\dot \la}{\la} \left \langle (y \p_y \chi) P^0,   \; 
  \left( e^{i \frac{v}{2}(\cdot)} T_{z} \right) \left(\cdot - \frac{z}{2} \right) \right \rangle \right| \\
  & \quad \lec |\mv| \int_{-2}^2 |P^0(y;z,v)| dy 
  \sup_{z \in [-2,2]} | T_z |
  \lec |\mv| e^{-\frac{z}{2}} e^{-z},
\end{split}
\end{equation}
again by~\eqref{asy} and~\eqref{esefest}.
To complete the proof of Proposition~\ref{Einnerest2},
it remains to show:
\begin{lemma}  \label{deltacomp}
\[
\begin{split}
  &  \left| \left\lan (2 \p_y \chi \p_y
  + \p_y^2 \chi_\rho) P^0, \;
  \left( e^{i \frac{v}{2}(\cdot)}  T_{z} \right)\left(\cdot - \frac{z}{2} \right)
  \right\ran  + 2 \Ga c_p e^{-\frac{z}{2}} T_{z}\left(-\frac{z}{2} \right) \right| \\
  & \qquad \lec e^{-z} \left( e^{-z} + e^{-\frac{p-1}{2} z} + v^2 \right).
\end{split}
\]
\end{lemma}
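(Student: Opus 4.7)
The plan is to convert the left-hand side into a Wronskian-type integral. Using $2\chi'\p_y+\chi'' = [\p_y^2,\chi]$ and integrating by parts (with no boundary contributions, since $\chi'$ is compactly supported and the jump of $\bar\phi'$ at $y=0$ is killed by $\chi'(0)=\chi(0)=0$), one obtains
\[
  \l\langle (2\chi'\p_y+\chi'')P^0,\; e^{i\frac{v}{2}\cdot}T_z(\cdot-\tfrac{z}{2})\r\rangle \;=\; \re\int \chi'(y)\l[P^{0\prime}\bar\phi-P^0\bar\phi'\r]\,dy,
\]
where $\phi(y)=e^{iv(y-z/2)/2}T_z(y-z/2)$. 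Splitting $P^0=P_++P_-$ and doing the phase algebra, the $P_+$-piece yields $\re I_+ = \int\chi'\, W(Q,T_z)(y-\tfrac{z}{2})\,dy$ (the purely imaginary $iv\, Q\,T_z$ drops under $\re$), while the $P_-$-piece contributes $\int \chi'(y)[Q'(y+\tfrac{z}{2})T_z(y-\tfrac{z}{2})-Q(y+\tfrac{z}{2})T_z'(y-\tfrac{z}{2})]\,dy$ modulo a factor $e^{-ivy}=1+O(v^2)$ after taking real parts (the linear-in-$v$ piece being imaginary). The $O(v)$-loss is absorbed in the admissible $v^2 e^{-z}$ error.

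The next step is to replace $T_z$ by $Q'+g$ and show that the $Q'$-contribution is negligible. Using the identity $W(Q,Q')=\tfrac{p-1}{p+1}Q^{p+1}$ (from $Q''=Q-Q^p$ and \eqref{relation}), the $P_+$-piece reduces to $\lec e^{-(p+1)z/2}$. A direct asymptotic expansion on $\supp\chi'$, where $Q(y\mp\tfrac{z}{2})=c_p e^{\pm y-z/2}+O(e^{-pz/2})$ and $Q'(y\mp\tfrac{z}{2})\approx \pm c_p e^{\pm y-z/2}$, writes the $P_-$-contribution as $-2c_p^2 e^{-z}\int_\R \chi'(y)\,dy + O(e^{-(p+1)z/2}) = O(e^{-(p+1)z/2})$, since $\int_\R\chi'=0$. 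Combining, the $Q'$-part of $I$ is absorbed in the error. Accordingly, after substituting the leading form $P^0\approx c_p e^{-z/2}(e^y+e^{-y})$ on $\supp\chi'$, the remainder reduces to
\[
  I \;=\; c_p e^{-z/2}\int \psi(y)\,g(y-\tfrac{z}{2})\,dy \;+\; O\!\l(e^{-z}(e^{-(p-1)z/2}+v^2)\r),
\]
where $\psi(y):=2\chi'(y)(e^y-e^{-y})+\chi''(y)(e^y+e^{-y})$ is even, supported in $1\leq|y|\leq 2$, and satisfies the explicit identity $\int_1^2 \psi(s)e^{-s}\,ds=2$.

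The key computation is to extract the leading behavior of $g(y-\tfrac{z}{2})$ on each component of $\supp\chi'$. On $y\in[1,2]$ (so $y-\tfrac{z}{2}>-\tfrac{z}{2}$), the representation \eqref{rep} together with the precise estimates from Section~\ref{properties} gives $2c_p\, g(y-\tfrac{z}{2}) = -Y(y-\tfrac{z}{2})\cdot\nu_z\|Q'\|_2^2+O(z e^{-3z/2})$; using $Y(y-\tfrac{z}{2})=e^{z/2-y}(1+O(e^{-z}))$ and the identity $\nu_z\|Q'\|_2^2=\tfrac{2\Ga c_p^2}{\Ga+2}e^{-z}+O(e^{-(1+a_p)z})$ from \eqref{nutight}, one finds $g(y-\tfrac{z}{2})=-\tfrac{\Ga c_p}{\Ga+2}\,e^{-z/2-y}+O(e^{-z})$. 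On $y\in[-2,-1]$, the $H^1$-decay of $T_z$ at $-\infty$ together with continuity of $T_z$ and the jump formula at $-\tfrac{z}{2}$ gives, by the analogous variation-of-parameters argument on $(-\infty,-\tfrac{z}{2})$, the mirror asymptotic $g(y-\tfrac{z}{2})=-\tfrac{\Ga c_p}{\Ga+2}\,e^{-z/2+y}+O(e^{-z})$. Substituting, using $\int_1^2 \psi(s)e^{-s}\,ds=2$ and the evenness of $\psi$, the two contributions sum to $-\tfrac{4\Ga c_p^2}{\Ga+2}e^{-z}+O(e^{-z}\cdot e^{-a_p z})$, which coincides with $-2\Ga c_p e^{-z/2}T_z(-\tfrac{z}{2})$ modulo the admissible error once \eqref{Ttight} is used. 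The main obstacle is the precise matching: the $Q'$-part has to cancel to order $e^{-z}$ rather than leaving any $\int\chi'\cdot e^{-z}$ residue, which is what forces the specific form of $\psi$ and the use of $\int\chi'=0$; and the leading asymptotics of $g$ on both sides of $-\tfrac{z}{2}$ must produce exactly matching constants, which is what identifies the combined integral with $-2\Ga c_p e^{-z/2}T_z(-\tfrac{z}{2})$ rather than some unrelated multiple.
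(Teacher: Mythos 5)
Your strategy is genuinely different from the paper's (the paper never expands $T_z = Q' + g$; it uses a Wronskian/double integration-by-parts argument that exploits the $\delta$-function in $L^+_z T_z = \nu_z T_z$ directly), and while your Wronskian set-up, the identity $\int_1^2 \psi e^{-s}ds = 2$, and the cancellation $\int \psi\,Q'(\cdot - z/2)\,dy = O(e^{-pz/2})$ are all correct, the final matching step contains a genuine gap.

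The problem is the sentence ``\emph{the identity} $\nu_z\|Q'\|_2^2 = \tfrac{2\Gamma c_p^2}{\Gamma+2}e^{-z} + O(e^{-(1+a_p)z})$ \emph{from~\eqref{nutight}}.'' Equation~\eqref{nutight} is not an asymptotic equality --- it is a two-sided bound whose lower constant $\frac{1}{\Gamma}\bigl[1+\Gamma - (1+2\Gamma)^{1/2}\bigr]$ and upper constant $\frac{\Gamma}{\Gamma+2}$ differ for every $\Gamma > 0$. (For $\Gamma = 2$: $\approx 0.38$ vs.\ $0.5$.) The same remark applies to your use of~\eqref{Ttight} at the end: it pins $T_z(-z/2)/(c_p e^{-z/2})$ only to within a window of width $O(1)$, not to the exact constant $\tfrac{2}{\Gamma+2}$. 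Your computation correctly reduces the LHS to $4c_p e^{-z/2}\bigl[T_z(-z/2) - c_p e^{-z/2}\bigr] + \text{error}$, and identifying this with $-2\Gamma c_p e^{-z/2}T_z(-z/2)$ is \emph{equivalent} to proving $T_z(-z/2) = \tfrac{2}{\Gamma+2}c_p e^{-z/2}\bigl(1 + O(e^{-z})\bigr)$. That refined asymptotic is actually true (it can be read off from the jump condition $T_z'(-\tfrac{z}{2}+) - T_z'(-\tfrac{z}{2}-) = \Gamma T_z(-\tfrac{z}{2})$ together with the one-sided decay $T_z'(-\tfrac{z}{2}\pm) \approx \mp T_z(-\tfrac{z}{2}) + \ldots$), but it is strictly stronger than anything in Proposition~\ref{efprop}, and neither the paper nor your proof establishes it. There is also an error-budget issue: even granting the exact constant, propagating the $O(e^{-a_p z})$ slack of~\eqref{nutight}/\eqref{Ttight} and the $O(z e^{-z})$ bound on $\zeta$ through the integral produces remainders of size $e^{-(1+a_p)z}$ and $z e^{-2z}$, both of which exceed the claimed $e^{-2z} + e^{-(p+1)z/2}$ for $2 < p < 5$.

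The paper sidesteps all of this: after two integrations by parts the $P_+$-contribution becomes $\int (1-\chi)(y+\tfrac{z}{2})\bigl(-\Gamma\delta_{-z/2} + \nu_z + (p-1)Q^{p-1}\bigr)Q\,T_z\,dy$, and the $\delta$ immediately yields $-\Gamma Q(-\tfrac{z}{2})T_z(-\tfrac{z}{2})$ with the remaining terms manifestly of size $(e^{-z} + e^{-(p-1)z/2})\cdot e^{-z}$. No asymptotic for $T_z(-\tfrac{z}{2})$ or $\nu_z$ is ever needed, which is precisely why the eigenfunction $T_z$ was introduced. Your route, by contrast, effectively re-derives that asymptotic as a by-product --- which is circular relative to what the paper provides.
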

\begin{proof}
Recalling the definition~\eqref{P} of $P^0$, we start with the
contribution from its first term.
Changing variable and integrating by parts twice, using
\[
  \chi'(-2) = \chi'(2) = 0, \qquad
  \chi(-2) = \chi(2) = 1, \qquad
  \chi(0) = 1,
\]
and that $T_z$ is real-valued, we find
\[
\begin{split}
  &\left\lan (2 \p_y \chi_\rho \p_y
  + \p_y^2 \chi_\rho) \left( e^{i \frac{v}{2}(\cdot)} Q \right)
  \left(\cdot - \frac{z}{2} \right), \;
  \left( e^{i \frac{v}{2}(\cdot)}  T_{z} \right) \left(\cdot - \frac{z}{2} \right)
  \right\ran \\
  &= \int_{-\frac{z}{2}-2}^{-\frac{z}{2} + 2 } 
  \left( 2 \chi'(y + \frac{z}{2}) Q'(y) + 
  \chi''(y + \frac{z}{2}) Q(y) \right) T_{z}(y) dy \\
  &= \int_{-\frac{z}{2}-2}^{-\frac{z}{2} + 2} 
  (\chi- 1)'(y + \frac{z}{2}) \left( Q'(y)  T_{z}(y) - 
  Q(y) T_{z}'(y) \right) dy \\
   &=  \int_{-\frac{z}{2}-2}^{-\frac{z}{2} + 2} 
  (\chi - 1)(y + \frac{z}{2}) \left( Q(y)  T_{z}''(y) - 
  Q''(y) T_{z}(y) \right) dy.
\end{split}
\]
Using the equations~\eqref{ode} and~\eqref{eigen}, this integral
becomes
\[
\begin{split}
  &\int_{-\frac{z}{2}-2}^{-\frac{z}{2} + 2} 
  (\chi-1)(y + \frac{z}{2}) \left( Q(y)  T_{z}''(y) - 
  Q''(y) T_{z}(y) \right) dy \\
  & \qquad = \int_{-\frac{z}{2}-2}^{-\frac{z}{2} + 2} 
  (1-\chi) (y + \frac{z}{2}) \left( 
  -\Ga \delta_{-\frac{z}{2}} + \nu_{z} +
  (p-1) Q^{p-1}(y))  \right) Q(y) T_{z}(y)  dy \\
  & \qquad = -\Ga Q \left(-\frac{z}{2}\right) T_{z}\left(-\frac{z}{2}\right) +
  O\left( (e^{-(p-1)\frac{z}{2}} + e^{-z}) e^{-z} \right),
\end{split}
\]
which using~\eqref{asy} shows
\begin{equation} \label{P1cont}
\begin{split}
  &\left| \left\lan (2 \p_y \chi_\rho \p_y
  + \p_y^2 \chi_\rho) , \left( e^{i \frac{v}{2}(\cdot)} Q \right)
  \left(\cdot - \frac{z}{2} \right), \;
  \left( e^{i \frac{v}{2}(\cdot)}  T_{z} \right) \left(\cdot - \frac{z}{2} \right)
  \right\ran \right. \\
  & \qquad \left. + \Ga c_p e^{-\frac{z}{2}}  T_{z}\left(-\frac{z}{2}\right) \right| \lec e^{-z} (e^{-\frac{p-1}{2}z} + e^{-z} + v^2),
\end{split}
\end{equation}
using~\eqref{asy}, \eqref{nutight} and~\eqref{pointwise}.
For the contribution from the second term of $P^0$, we have
\[
\begin{split}
  &\left\lan (2 \p_y \chi_\rho \p_y
  + \p_y^2 \chi_\rho) \left( e^{-i \frac{v}{2}(\cdot)} Q \right)
  \left(\cdot + \frac{z}{2} \right), \;
  \left( e^{i \frac{v}{2}(\cdot)}  T_{z} \right) \left(\cdot - \frac{z}{2} \right) \right\ran  \\
  &= \re \int_{-\frac{z}{2}-2}^{-\frac{z}{2} + 2 } e^{i\frac{v}{2}(2y+z)}
  \left( 2 \chi'(y + \frac{z}{2}) \left[Q'(y+z) + i\frac{v}{2} Q(y+z) \right] + 
  \chi''(y + \frac{z}{2}) Q(y+z) \right) T_{z}(y) dy \\
  &=  \int_{-\frac{z}{2}-2}^{-\frac{z}{2} + 2 } \cos( v(y+\frac{z}{2}) )
   \left( 2 \chi'(y + \frac{z}{2}) Q'(y+z)  + 
  \chi''(y + \frac{z}{2}) Q(y+z) \right) T_{z}(y) dy \\
  & \quad - v \int_{-\frac{z}{2}-2}^{-\frac{z}{2} + 2 } \sin( v(y+\frac{z}{2}) )
  \chi'(y + \frac{z}{2}) Q(y+z) T_z(y) dy \\
   &=  \int_{-\frac{z}{2}-2}^{-\frac{z}{2} +2}
   \left( 2 \chi'(y + \frac{z}{2}) Q'(y+z)  + 
  \chi''(y + \frac{z}{2}) Q(y+z) \right) T_{z}(y) dy
  + O(v^2 e^{-z})
\end{split}
\]
by~\eqref{asy} and~\eqref{pointwise}. Integrating by parts and
using the equations~\eqref{ode} and~\eqref{eigen} just 
as above, the integral here becomes
\[
\begin{split}
  &\int_{-\frac{z}{2}-2}^{-\frac{z}{2} +2}
  \left( 2 \chi'(y + \frac{z}{2}) Q'(y+z)  + 
  \chi''(y + \frac{z}{2}) Q(y+z) \right) T_{z}(y) dy \\
  &=  \int_{-\frac{z}{2}-2}^{-\frac{z}{2} + 2} 
  (1-\chi) (y + \frac{z}{2}) \left( 
  -\Ga \delta_{-\frac{z}{2}} + \nu_{z} +
  (p-1) Q^{p-1}(y+z))  \right) Q(y+z) T_{z}(y)  dy \\
  &= -\Ga Q \left(\frac{z}{2}\right) T_{\Ga,z}\left(-\frac{z}{2}\right) +
  O\left( (e^{-(p-1)\frac{z}{2}} + e^{-z}) e^{-z} \right),
\end{split}
\]
and so
\begin{equation} \label{P2cont}
\begin{split}
  &\left| \left\lan (2 \p_y \chi_\rho \p_y
  + \p_y^2 \chi_\rho) , \left( e^{-i \frac{v}{2}(\cdot)} Q \right)
  \left(\cdot + \frac{z}{2} \right), \;
  \left( e^{i \frac{v}{2}(\cdot)}  T_{z} \right) \left(\cdot - \frac{z}{2} \right)
  \right\ran \right. \\
  & \qquad \left. + \Ga c_p e^{-\frac{z}{2}}  T_{\Ga,z}\left(-\frac{z}{2}\right) \right| \lec e^{-z} (e^{-\frac{p-1}{2}z} + e^{-z}).
\end{split}
\end{equation}
Adding together~\eqref{P1cont} and~\eqref{P2cont} 
completes the proof of Lemma~\ref{deltacomp}.
\end{proof}
Proposition~\ref{Einnerest2} now follows from collecting the estimates~\eqref{chiEcont},~\eqref{chipcont},~\eqref{chidchicont}
and Lemma~\ref{deltacomp}.
\end{proof}

\appendix

\section{Proof of the modulation lemma} 
\label{modproof}

Here we prove Lemma~\ref{modulation}.
\begin{proof}
Given a solution $u$ as in~\eqref{givensol}, satisfying~\eqref{endparam} and~\eqref{endclose}, we express the three differentiated orthogonality conditions~\eqref{difforthos} by using the 
equation~\eqref{eta} as follows:
\begin{equation} \label{contributions}
\begin{split}
  0 &= \frac{d}{ds} \left\lan \left[ \begin{array}{c} 
  Q \\ y Q \\ i \La Q \end{array} \right],
  \eta(s,\cdot) \right\ran = 
  \left\lan \left[ \begin{array}{c} 
  iQ \\ iy Q \\ -\La Q \end{array} \right],
  i \p_s \eta(s,\cdot) \right\ran \\
  &=  \left\lan \left[ \begin{array}{c} 
  iQ \\ iy Q \\ -\La Q \end{array} \right],
  (-\p_y^2 + \la \Ga \de_{-\frac{z}{2}} + 1) \eta
  - |P_1+\eta|^{p-1}(P_1+\eta) + |P_1|^{p-1} P_1 \right. \\
  & \qquad \qquad \qquad \qquad 
  \left. - \mv \cdot \Mv \eta - \cE_{P_1} \right\ran. 
\end{split}
\end{equation}
Re-expressing $\eta$
in terms of $u$ (and the parameters $\vec{q}$) via~\eqref{xi}-\eqref{eta} to get
\begin{equation} \label{etau}
  \eta(s,y) = e^{-i\frac{v}{2} y} \left[
  e^{-i \gat} \la^{\frac{2}{p-1}} u(t,\la(y + \frac{z}{2})) - P(y + \frac{z}{2}; z, v) \right],
\end{equation}
and adding in the motion law~\eqref{vdyn2},
produces, we claim, a system of four equations of the form
\begin{equation} \label{odesys}
  \Upsilon \; \frac{d}{ds} \vec{p} = \vec{f}, \qquad
  \vec{p} = \left[ \begin{array}{c} \log \la \\
  \frac{z}{2} \\ \gat \\ \frac{v}{2} \end{array} \right]
\end{equation}
where 
\begin{equation} \label{regularity}
  \Upsilon = \Upsilon(\vec{p},t) \mbox{ and } 
  \vec{f} = \vec{f}(\vec{p},t) \mbox{ are } C^1 
  \mbox{ in } \vec{p} \mbox{ and continuous in } t,
\end{equation}
and
\begin{equation} \label{invert}
  0 < \frac{1}{z} +  |v| z +
  \| \eta \|_{H^1} < 2\epsilon_0   \;\; \implies \;\; 
  \| \Upsilon^{-1} \| \lec 1
\end{equation}
(if $\epsilon_0$ chosen small enough).
We give just a rough sketch of this claim.

First, the main part of $\Upsilon$ comes from the
following part of the contribution of $\cE_{P_1}$ to~\eqref{contributions}:
\[
\begin{split}
  &\left\lan \left[ \begin{array}{c} 
  iQ \\ iy Q \\ -\La Q \end{array} \right],
  -\chi(\cdot + \frac{z}{2}) \mv \cdot \Mv Q \right\ran
  = -\mv \cdot \left( \left\lan \left[ \begin{array}{c} 
  iQ \\ iy Q \\ -\La Q \end{array} \right],
  \Mv Q \right\ran + O(z^2 e^{-z}) \right).
\end{split}
\]
Using the inner-product relations~\eqref{inners}
and~\eqref{inners3}, and adding in the 
motion law~\eqref{vdyn2}, $\dot v = - \Ht(z)$,
these terms produce
\[
  \left( A + O(z^2 e^{-z}) \right) \mv
  + \left[ \begin{array}{c} 
  0 \\ 0 \\ 0 \\ \frac{\Ht(z)}{2} \end{array} \right],
  \qquad A =  M(Q) \left[ \begin{array}{cccc} 
  \frac{p-5}{p-1} & 0 & 0 & 0 \\ 
  0 & -1 & 0 & 0 \\ 
  0 & 0 & \frac{5-p}{p-1} & 0 \\ 
  \frac{v}{2} & 0 & 0 & 1 \end{array} \right] \mv
\]
in the right side of~\eqref{contributions}. 
Observing that
\[
  \mv = B \frac{d}{ds} \vec{p} +
  \left[ \begin{array}{c} 0 \\ -v \\ \frac{v^2}{4} - 1 \\ 0 \end{array} \right], \qquad
  B = \left[ \begin{array}{cccc} 
  1 & 0 & 0 & 0 \\ \frac{z}{2} & 1 & 0 & 0 \\
  \frac{vz}{4} & -\frac{v}{2} & 1 & 0 \\ 
  -\frac{v}{2} & 0 & 0 & 1 \end{array} \right],
\]
we see that these terms produce
\[
  \left( AB + O(z^2 e^{-z}) \right) \frac{d}{ds} \vec{p} + \left( A + O(z^2 e^{-z}) \right) 
   \left[ \begin{array}{c} 0 \\ -v \\ \frac{v^2}{4} - 1 \\ 0 \end{array} \right] + 
   \left[ \begin{array}{c} 
  0 \\ 0 \\ 0 \\ \frac{\Ht(z)}{2} \end{array} \right]
\]
in~\eqref{contributions}, with the first term contributing to the matrix $\Upsilon$ on the left side of~\eqref{odesys}, and the rest to its right side.
The uniform invertibility~\eqref{invert} of $\Upsilon$
will be a consequence of the easily verified
(uniform) invertibility of the matrix $AB$.
The regularity~\eqref{regularity} of these 
contributions is straightforward to verify, 
except for the non-trivial fact that 
the force law $\tilde H(z)$ 
(in particular the contribution $T_z(-\frac{z}{2})$ to it) is $C^1$ in $z$, which nonetheless follows from the
estimates in the proof of~\eqref{efz} given
in Section~\ref{properties}. More precisely:
differentiating~\eqref{eigtrans2} in $z$
and using~\eqref{efz} shows the boundedness of
$\p_z^2 \nu_z$. Then differentiating~\eqref{dzgtilde} in $z$ and estimating $\p_z^2 \tilde g$ just as in the
subsequent estimates of $\hat g$ (hence $\p_z \tilde g$) shows the boundedness
of $\| \p_z^2 \tilde g \|_{H^1}$, from 
which $T_z(-z/2) \in C^1_z$ follows.

The remaining contributions to $\Upsilon$ in~\eqref{odesys} come from the terms
\[
  \left\lan \left[ \begin{array}{c} 
  iQ \\ iy Q \\ -\La Q \end{array} \right],
  -\chi(y+\frac{z}{2}) e^{i \frac{v}{2} z} e^{-i v y } \Om \mv \cdot \Mv Q(y+z) \right\ran
  = O(z^2 e^{-z}) |\mv|
\]
and
\[
  \left\lan \left[ \begin{array}{c} 
  iQ \\ iy Q \\ -\La Q \end{array} \right],
  -\frac{\dot \la}{\la} e^{-i\frac{v}{2}y} (y \p_y \chi)(y+\frac{z}{2}) P^0(y + \frac{z}{2}) \right\ran
  = O(z^2 e^{-z}) |\mv|
\]
from $\cE_{P_1}$, and
\[
  \left\lan \left[ \begin{array}{c} 
  iQ \\ iy Q \\ -\La Q \end{array} \right],
  -\mv \cdot \Mv \eta \right\ran
  = O(\|\eta\|_2 |\mv|)
\]
which all contribute error terms to $\Upsilon$
which do not harm the invertibility~\eqref{invert},
and for which the regularity~\eqref{regularity} is 
easily verified (in the case of the last term, for example, it is ensured by $u \in C_t H^1$).

The remaining contributions to~\eqref{contributions}
from $\cE_{P_1}$ add to $\vec{f}$ in~\eqref{odesys}
and are easily seen to verify~\eqref{regularity}.

The terms in~\eqref{contributions} coming from
$(-\p_y^2 + \la \Ga \de_{-\frac{z}{2}} + 1) \eta$
contribute to $\vec{f}$ in~\eqref{odesys}, and
are handled by observing that since
$\eta(s,\cdot) \in \D_{-\frac{z}{2}}^{\la \Ga}$,
\[
\begin{split}
  &\left\lan \left[ \begin{array}{c} 
  iQ \\ iy Q \\ -\La Q \end{array} \right],
  (-\p_y^2 + \la \Ga \de_{-\frac{z}{2}} + 1) \eta
  \right\ran = 
  \left\lan   (-\p_y^2 + 1) \left[ \begin{array}{c} 
  iQ \\ iy Q \\ -\La Q \end{array} \right],
  \eta \right\ran \\ & \qquad \qquad \qquad - \la \Ga
  \re  \left( \eta(s,-\frac{z}{2}) 
  \left[ \begin{array}{c} 
  iQ \\ iy Q \\ \La Q \end{array} \right]
  (-\frac{z}{2}) \right)
\end{split}
\]
and then expressing $\eta$ in terms of $u$ via~\eqref{etau}, to verify the regularity~\eqref{regularity}. For example,
the last term becomes
\[
  -\la \Ga
  \re  \left(e^{i \frac{vz}{4}} \la^{\frac{2}{p-1}} u(t,0)
  \left[ \begin{array}{c} 
  iQ \\ iy Q \\ \La Q \end{array} \right]
  (-\frac{z}{2}) \right),
\]
whose continuity in $t$ is ensured by $u \in C_t H^1$. 
\begin{remark} \label{modproofrem}
The proof of~\cite[Lemma 5]{Ngu19} treats~\eqref{odesys}, together with the relation~\eqref{ts}, that is 
\begin{equation} \label{ts2}
  \frac{dt}{ds} = \la^2,
\end{equation}
as an autonomous system
for $(\vec{p},t)$ (with independent variable $s$)
which requires the ODE coefficients to be $C^1$
in $t$, In the presence of the delta potential,
however, we obtain terms as above containing $u(t,0)$,
which is continuous in $t$, but not necessarily $C^1$,
since we have only $u \in C_t \D_\Ga \cap C^1_t L^2$. 
Therefore, our argument proceeds differently:
we will first ignore $s$,
and solve a non-autonomous system for $\vec{\hat{p}}(t) = \vec{p}(s)$
(for which mere continuity in $t$ suffices), and then 
define $s = s(t)$ by inverting~\eqref{ts}, and setting
$\vec{p}(s) = \vec{\hat{p}}(t(s))$.
\end{remark}
The regularity~\eqref{regularity} of the 
contributions to $\vec{f}$ in~\eqref{odesys} 
coming from the remaining terms of~\eqref{contributions} is easily checked.  

Using~\eqref{ts2}, and writing
\[
  \vec{\hat p}(t) = \vec{p}(s), \qquad
  \frac{d}{dt} \vec{\hat p} = \la^2(s) 
  \frac{d}{ds} \vec{p} = e^{2 \hat{p}_1(t)} \vec{p}, 
\]
as long as $\| \eta \|_{H^1} < 2\epsilon_0$ and
$z > \frac{2}{\epsilon_0}$ (which holds on some interval by assumptions~\eqref{endparam}
and~\eqref{endclose}) on the final conditions at $t=s_{\mathrm f}$,
we may use~\eqref{invert} to rewrite~\eqref{odesys}
as the system
\[
  \frac{d}{dt} \vec{\hat p} = e^{2 \hat{p}_1} \left(\Upsilon(\vec{\hat p}, t) \right)^{-1} \vec{f}(\vec{\hat p},t).
\]
The regularity~\eqref{regularity} ensured that the Picard-Lindel\"of theorem applies to this system
to provide a unique solution
\[
  \vec{\hat p} \in C^1(\hat I), \qquad
  \vec{\hat p}(s_{\mathrm f}) = \left[ \begin{array}{c} \log \la_{\mathrm f} \\
  \frac{z_{\mathrm f}}{2} \\ \gat_{\mathrm f} \\ \frac{v_{\mathrm f}}{2} \end{array} \right]
\]
on some open interval $\hat I \ni s_{\mathrm f}$, on which also
(by continuity and~\eqref{endparam}-\eqref{endclose})
\[
  |{\hat p}_1(t) = \log \hat \la(t)| +
  \frac{1}{\hat z(t)} + |\hat v(t)| < 2 \epsilon_0.
\]
Finally, we invert~\eqref{ts} to define
\[
  I \ni s(t) = s_{\mathrm f} - \int_t^{s_{\mathrm f}} \frac{d \tau}{\hat\la^2(\tau)}
\]
and set $\vec{p}(s) = \vec{\hat p}(t)$ 
to produce the parameters $\vec{q}(s)$
satisfying the conditions of Lemma~\ref{modulation}.
\end{proof}

\section{Some computations}
\label{comps}

\begin{lemma} \label{Ip}
With $H(z)$ defined as in~\eqref{Hdef} and $p > 2$, we have
\begin{enumerate}
\item
$|H(z) - c_p I_p e^{-z}| \lec e^{-2z} + e^{-\frac{p+1}{2} z}$,
where $I_p = \int_{-\infty}^\infty Q^p(x) e^{-x} dx$;
\item
$I_p = 2 c_p$.
\end{enumerate}
\end{lemma}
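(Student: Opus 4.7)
The plan is to handle the two parts separately, using heavily the soliton asymptotic~\eqref{asy} and integration by parts.

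\textit{Part (1).} I would split $H(z) = p(A_1 + A_2)$ into its two defining integrals. On the domain of $A_1$, $y \geq -\tfrac{z}{2}$ so that $y+z \geq \tfrac{z}{2} \gg 1$, and~\eqref{asy} gives $Q(y+z) = c_p e^{-(y+z)} + O(e^{-p(y+z)})$. Pulling out $c_p e^{-z}$,
\[
  A_1 = c_p e^{-z} \int_{-z/2}^\infty Q^{p-1}(y) Q'(y) e^{-y} dy + R_1,
\]
where $|R_1| \lec e^{-pz} \int_{-z/2}^\infty |Q^{p-1} Q'|(y) e^{-py} dy \lec z e^{-pz} \lec e^{-2z}$ (using $p>2$). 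The remaining integral I would evaluate by writing $p Q^{p-1}Q' = (Q^p)'$, integrating by parts once (boundary terms vanish by exponential decay), and subtracting the left tail $\int_{-\infty}^{-z/2} Q^{p-1}Q' e^{-y} dy$, which is $\lec e^{-(p-1)z/2}$ since $|Q^{p-1}Q'|(y) \lec e^{py}$ for $y \leq 0$. This yields $A_1 = (c_p I_p / p) e^{-z} + O(e^{-2z} + e^{-(p+1)z/2})$. For $A_2$, where $y \leq -\tfrac{z}{2}$, I would bound directly using $|Q'(y) Q(y)| \lec e^{2y}$ and $|Q^{p-1}(y+z)| \lec e^{-(p-1)|y+z|}$, splitting the range at $y=-z$ (where $y+z$ changes sign). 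The tail piece $(-\infty,-z]$ yields $O(e^{-2z})$ and the window $[-z,-\tfrac{z}{2}]$ yields $O(e^{-(p+1)z/2})$ after distinguishing the cases $p<3$ and $p>3$. Multiplying by $p$ gives~(1).

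\textit{Part (2).} I would derive this by testing the soliton ODE $-Q''+Q-Q^p = 0$ against $e^{-y}$ on $\R$. Integrating by parts twice, with boundary terms evaluated using~\eqref{asy} ($Q(y), Q'(y) \to c_p e^y$ as $y \to -\infty$, with exponential decay at $+\infty$), gives $[Q' e^{-y}]_{-\infty}^\infty = -c_p$ and $[Q e^{-y}]_{-\infty}^\infty = -c_p$, hence
\[
  \int_{-\infty}^\infty Q''(y) e^{-y} dy = -2c_p + \int_{-\infty}^\infty Q(y) e^{-y} dy,
\]
which rearranges the integrated ODE into $I_p = 2c_p$.

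\textit{Main obstacle.} The routine but delicate step is the error analysis for $A_2$ in part~(1), where $y+z$ changes sign at $y=-z$ and the effective exponent in the integrand depends on the sign of $3-p$. The borderline case $p=3$ yields a mild $ze^{-2z}$ correction in place of $e^{-2z}$; this is harmless in the applications where the lemma is invoked.
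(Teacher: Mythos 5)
Your proposal is essentially correct, and the overall structure is sound. Here is how it compares to the paper.

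\textbf{Part (1).} Your argument is essentially the same as the paper's, up to a harmless reordering: the paper first writes $pQ^{p-1}Q' = (Q^p)'$ and integrates by parts (picking up a boundary term $O(e^{-(p+1)z/2})$ at $y=-z/2$), and only then substitutes the asymptotics for $Q'(y+z)$; you substitute the asymptotics for $Q(y+z)$ first, then integrate by parts after extending to all of $\R$. Either way the key steps -- identification of the main term $c_p I_p e^{-z}$, the left tail $O(e^{-(p+1)z/2})$, and the $O(ze^{-pz})\lec e^{-2z}$ remainder -- coincide. Your bound for $A_2$ uses the correct decay $|Q^{p-1}(y+z)|\lec e^{-(p-1)|y+z|}$ (the paper's displayed estimate uses $e^{-p|y+z|}$, which appears to be a typo), and your case analysis at $p=3$ is correct: one does pick up a genuine $ze^{-2z}$, so strictly speaking the stated bound should carry a $z$ factor at $p=3$. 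You are right that this is immaterial for the applications, which only require the error to be $o(e^{-z})$ with some polynomial margin; note the paper does not flag this.

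\textbf{Part (2).} Here your route is genuinely different and arguably cleaner. The paper evaluates $I_p$ directly from the closed form~\eqref{Qform} via the substitutions $y=\tfrac{p-1}{2}x$, $z=e^{2y}$, arriving at a Beta-type integral that equals $1$. You instead test the ODE $-Q''+Q-Q^p=0$ against $e^{-y}$, so the value $I_p=2c_p$ falls out of boundary terms; this never uses the explicit $\cosh$ formula, only the asymptotics~\eqref{asy}, and is more robust. One technical caveat: the intermediate quantities $\int_\R Q e^{-y}\,dy$, $\int_\R Q' e^{-y}\,dy$, $\int_\R Q'' e^{-y}\,dy$ are individually divergent (each integrand tends to a nonzero constant as $y\to-\infty$), so your displayed identity as written is $\infty=\infty$. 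The cure is standard: either work on $[-R,\infty)$ and send $R\to\infty$ after combining terms, or observe directly that $(-Q''+Q)e^{-y} = -\frac{d}{dy}\bigl[(Q'+Q)e^{-y}\bigr]$, so
\[
  I_p=\int_\R Q^p e^{-y}\,dy = \int_\R(-Q''+Q)e^{-y}\,dy
  = -\bigl[(Q'+Q)e^{-y}\bigr]_{-\infty}^{\infty} = 2c_p,
\]
using $Q,Q'\sim c_p e^{y}$ as $y\to-\infty$. With that bookkeeping in place, your argument is complete.
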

\begin{proof}
We use~\eqref{asy} throughout. First,
\[
\begin{split}
 & p\int_{-\frac{z}{2}}^\infty Q^{p-1}(y) Q'(y) Q(y + z) dy
 = \int_{-\frac{z}{2}}^\infty(Q^p(y))' Q(y+z) dy \\
 & \qquad = -Q^p\left(-\frac{z}{2}\right) Q\left(\frac{z}{2}\right)
 -  \int_{-\frac{z}{2}}^\infty Q^p(y) Q'(y+z) dy \\
 & \qquad = \int_{-\frac{z}{2}}^\infty Q^p(y) \left(c_p e^{-(y+z)}
 + O(e^{-p(y+z)}) \right) dy + O(e^{-\frac{p+1}{2}z}) \\ 
 & \qquad = c_p e^{-z} \left( I_p - \int_{-\infty}^{-\frac{z}{2}} Q^p(y) e^{-y} dy \right) + 
 \int_{-\frac{z}{2}}^\infty O(e^{-p|y|} e^{-p(y+z)}) dy + O(e^{-\frac{p+1}{2}z}) \\
 & \qquad = c_p I_p e^{-z} + O\left( e^{-\frac{p+1}{2}z} + z e^{-pz} \right).
\end{split}
\]
Second,
\[
   \left| \int_{-\infty}^{-\frac{z}{2}} Q^{p-1}(y+z) Q'(y) Q(y) dy \right| \lec
   \int_{-\infty}^{-\frac{z}{2}} e^{-p|y+z|} e^{2y} dy
   \lec e^{-2z} + z e^{-pz},
\]
and so we have the first statement of Lemma~\ref{Ip},
using $p > 2$.

From~\eqref{Qform} and~\eqref{asy}, and substituting $y = \frac{p-1}{2} x$:
\[
\begin{split}
  \frac{I_p}{2 c_p} &= 2^{-\frac{p+1}{p-1}}\frac{p+1}{2} \int_\R
  e^{-x} \cosh^{-\frac{2p}{p-1}}((\frac{p-1}{2}) x) dx \\
  &=  2^{-\frac{p+1}{p-1}}\frac{p+1}{p-1} \int_\R
  e^{-\frac{2}{p-1} y} \cosh^{-\frac{2p}{p-1}}(y) dy \\
  &= 2\frac{p+1}{p-1} \int_\R  e^{-\frac{2}{p-1} y}(e^y + e^{-y})^{-\frac{2p}{p-1}} dy
  = 2\frac{p+1}{p-1} \int_\R  e^{2y}(e^{2y} + 1)^{-\frac{2p}{p-1}} dy \\
  &= \frac{p+1}{p-1} \int_0^\infty (z+1)^{-\frac{2p}{p-1}} dz
  = -(z+1)^{-\frac{p+1}{p-1}} |^\infty_0 = 1
\end{split}
\]
substituting $z = e^{2y}$ in the last step, which
gives the second  statement of Lemma~\ref{Ip}.
\end{proof}

\begin{lemma} \label{Pdiff}
For positive $z_1, z_2$ sufficiently large, 
\[
  \frac{|z_1 - z_2|^2}{1 + |z_1 - z_2|^2} \lec \inf_{\omega \in \R, |v| \leq 1} \| P(\cdot; z_1,0) -
  e^{i \omega} P(\cdot; z_2, v) \|^2_{2}
  + e^{-\min(z_1,z_2)}.
\]
\end{lemma}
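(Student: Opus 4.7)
The plan is to reduce the infimum over $\omega$ and $v$ to a single explicit scalar function $h(d) := \int Q(x+d/2) Q(x)\, dx$ with $d = z_2 - z_1$, use positivity of $Q$ to trivialize the $v$-optimization, and finish with a Taylor expansion of $h$ near $d=0$ together with its exponential decay at infinity.

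First I would carry out the $\omega$-optimization explicitly:
\[
 \inf_{\omega \in \R} \| P(\cdot; z_1, 0) - e^{i \omega} P(\cdot; z_2, v) \|_2^2
 = \| P(\cdot; z_1, 0) \|_2^2 + \| P(\cdot; z_2, v) \|_2^2 - 2 \l| \lan P(\cdot; z_1, 0), P(\cdot; z_2, v) \ran \r|.
\]
Using~\eqref{P3},~\eqref{P} and~\eqref{asy} one computes $\| P(\cdot; z_j, v) \|_2^2 = 2 \| Q \|_2^2 + O(e^{-\min(z_1,z_2)})$ uniformly in $v$. Expanding $\lan P(\cdot; z_1, 0), P(\cdot; z_2, v) \ran$ into four soliton-soliton pairings, the two ``left--right'' cross terms are bounded by $\int Q(x + (z_1+z_2)/2) Q(x)\, dx \lec e^{-\min(z_1,z_2)}$, the $\chi$-cut-off errors are of the same order, and the substitution $y \mapsto -y$ together with the evenness of $\chi, Q$ shows that the ``right--right'' and ``left--left'' direct terms are equal. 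After the change of variable $x = y - z_2/2$ one is left with
\[
 \lan P(\cdot; z_1, 0), P(\cdot; z_2, v) \ran = 2 \int_\R Q(x + d/2) Q(x) e^{i vx/2}\, dx + O(e^{-\min(z_1, z_2)}).
\]

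For the $v$-optimization the key observation is that since $Q > 0$,
\[
 \l| \int_\R Q(x+d/2) Q(x) e^{ivx/2}\, dx \r|
 \leq \int_\R Q(x+d/2) Q(x)\, dx = h(d)
\]
for every $v \in \R$, so the problem reduces to showing
\[
 \| Q \|_2^2 - h(d) \gec \frac{d^2}{1+d^2}.
\]
The function $h$ is even and smooth with $h(0) = \| Q \|_2^2$, $h'(0) = \tfrac{1}{2}\int Q' Q = 0$, and $h''(0) = \tfrac{1}{4}\int Q'' Q = -\tfrac{1}{4}\| Q' \|_2^2 < 0$, so Taylor gives $h(d) = \| Q \|_2^2 - \tfrac{d^2}{8}\| Q' \|_2^2 + O(d^4)$ near $0$. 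On the other hand,~\eqref{asy} yields $h(d) \lec (1+|d|) e^{-|d|/2} \to 0$ as $|d| \to \infty$. Thus $f(d) := \| Q \|_2^2 - h(d)$ is continuous, strictly positive on $\R \setminus \{0\}$, with $f(d) \sim c d^2$ at $0$ and $f(d) \to \| Q \|_2^2$ at $\infty$; comparing with $d^2/(1+d^2)$, which has the same qualitative behavior, a limit/compactness argument gives the uniform bound $f(d) \gec d^2/(1+d^2)$ on $\R$, and the lemma follows.

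The only genuinely delicate step is the bookkeeping in the first paragraph: one must verify that each of the ``left--right'' cross-pairings between solitons at $\pm z_1/2$ and $\mp z_2/2$, and all the cut-off errors from replacing $\chi(y)$ by $1$, contribute only $O(e^{-\min(z_1,z_2)})$ (up to polynomial factors absorbed into the $\lec$ constants), so that only the clean scalar quantity $h(d)$ survives. Once that reduction is made, positivity $Q > 0$ trivializes the $v$-minimization, and what remains is an elementary one-variable comparison.
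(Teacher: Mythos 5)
Your overall strategy (reduce to a scalar comparison, exploit $Q>0$ to kill the phase/velocity degrees of freedom, Taylor expand at $d=0$, use compactness for large $d$) is the same as the paper's, and the final step (showing $\|Q\|_2^2 - h(d) \gtrsim d^2/(1+d^2)$) is sound. But the bookkeeping in your first paragraph, which you flag as the delicate step, does contain a genuine gap: the cross-pairings are \emph{not} $O(e^{-\min(z_1,z_2)})$ uniformly. For instance the ``left--right'' bound you quote,
\[
  \int_\R Q\!\left(x+\tfrac{z_1+z_2}{2}\right)Q(x)\,dx \;\lesssim\; \left(1+\tfrac{z_1+z_2}{2}\right)e^{-(z_1+z_2)/2},
\]
is $\approx z\,e^{-z}$ when $z_1\approx z_2\approx z$, because the integrand is $\approx c_p^2\,e^{-(z_1+z_2)/2}$ over a region of length $\approx (z_1+z_2)/2$. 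The same polynomial prefactor appears in $\|P(\cdot;z_j,v)\|_2^2 - 2\|Q\|_2^2$ via $\int Q(y-\tfrac{z_j}{2})Q(y+\tfrac{z_j}{2})\cos(vy)\,dy$. So your reduction gives an error of size $O(\min(z_1,z_2)\,e^{-\min(z_1,z_2)})$, not $O(e^{-\min(z_1,z_2)})$, and hence does not prove the lemma as stated.

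The paper avoids this by not computing the $L^2$ norm globally: it drops down to the interval $[\tfrac{z_1}{2}-1,\tfrac{z_1}{2}]$ (length $1$, far from both the origin and the left-hand solitons), where all the unwanted contributions -- the left solitons of both $P$'s, and the cut-off $\chi$ -- are pointwise $O(e^{-z_1})$, so their contribution to the integral over this fixed-length interval is genuinely $O(e^{-z_1})$, with no polynomial loss. Then the same $Q>0$ trick and a one-variable function $g(\tilde z)=\int_{-1}^0(Q(y)-Q(y-\tilde z))^2\,dy$ carry the rest. If you want to keep your global formulation you should either localize in this way, or be content to prove the lemma with $e^{-\min(z_1,z_2)}$ replaced by $(1+\min(z_1,z_2))e^{-\min(z_1,z_2)}$, which is what your estimates actually yield (and might still suffice downstream, but is weaker than what is claimed).
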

\begin{proof}
Assuming (without loss of generality) 
$z_1 < z_2$, We have
\[
  \| P(\cdot; z_1,0) -
  e^{i \omega} P(\cdot; z_2, v) \|^2_{2} \geq
  \int_{\frac{z_1}{2}-1}^\frac{z_1}{2}
  |Q(y-\frac{z_1}{2}) - e^{i \Phi}Q(y - \frac{z_2}{2})|^2 dy -  O(e^{-z_1})
\]
where $\Phi = \omega + \frac{v}{2}(y-\frac{z_2}{2})$. Now
\[
  \left| Q(y-\frac{z_1}{2}) - e^{i \Phi}Q(y - \frac{z_2}{2}) \right|^2
  \geq \left( Q(y-\frac{z_1}{2}) - Q(y - \frac{z_2}{2}) \right)^2,
\]
so
\[
\begin{split}
  &\| P(\cdot; z_1,0) -
  e^{i \omega} P(\cdot; z_2, v) \|^2_{2} \geq
  \int_{\frac{z_1}{2}-1}^\frac{z_1}{2}
  \left( Q(y-\frac{z_1}{2}) - Q(y - \frac{z_2}{2}) \right)^2 dy
  -  O(e^{-z_1}) \\
  & \qquad = g(\tilde{z}) -  O(e^{-z_1}), \quad
  g(\tilde{z}) = \int_{-1}^0
  \left( Q(y) - Q(y - \tilde{z}) \right)^2 dy,
  \quad \tilde{z} = \frac{z_2-z_1}{2} > 0.
\end{split}
\]
Now compute
\[
  g(0) = g'(0) = 0, \quad
  g''(0) = 2 c, \quad  c = \int_{-1}^0 (Q'(y))^2 dy > 0 ,
\]
so by Taylor expansion
\[
  g(\tilde{z}) = c \tilde{z}^2 + O(\tilde{z}^3),
  \qquad \tilde z \geq 0,
\]
and since
\[
  g(\tilde z) > 0 \mbox{ for } \tilde z > 0
  \;\; \mbox{ and } \;\;
  \lim_{\tilde z \to \infty} g(\tilde z) = 
  \int_{-1}^0 Q^2(y) dy > 0,
\]
we have
\[
  \frac{\tilde z^2}{1 + \tilde z^2} \lec g(\tilde z), \qquad \tilde z \geq 0.
\]
The lemma follows.
\end{proof}

\section*{Acknowledgment}

The first author is partially supported by NSERC Discovery Grant.
The second author is partially supported by a JSPS Overseas Research Fellowship.


\end{document}